\numberwithin{equation}{section}
\numberwithin{figure}{section}
\theoremstyle{plain}
\newtheorem{thm}{\protect\theoremname}[section]
  \theoremstyle{plain}
  \newtheorem{prop}[thm]{\protect\propositionname}
   \theoremstyle{plain}
  \newtheorem{cor}[thm]{\protect\corollaryname}
  \theoremstyle{remark}
  \theoremstyle{definition}
  \newtheorem{defn}[thm]{\protect\definitionname}
\theoremstyle{definition}
  \newtheorem{exa}[thm]{\protect\examplename}
  \theoremstyle{plain}
  \newtheorem{lem}[thm]{\protect\lemmaname}
\newtheorem{maintheorem}{Theorem}
 \newtheorem{maintheoremI}{Theorem}
  \providecommand{\definitionname}{Definition}
  \providecommand{\lemmaname}{Lemma}
  \providecommand{\propositionname}{Proposition}
  \providecommand{\corollaryname}{Corollary}
  \providecommand{\remarkname}{Remark}
\providecommand{\theoremname}{Theorem}
\providecommand{\examplename}{Example}
\theoremstyle{definition}
\newcommand{\co}{\mathbb{C}}
\newcommand{\re}{\mathbb{R}}
\newcommand{\qe}{\mathbb{Q}}
\newcommand{\ze}{\mathbb{Z}}
\newcommand{\nat}{\mathbb{N}}
\newcommand{\cpt}[1]{\mathbb{C}P^{2}}
\newcommand{\F}{\mathcal{F}}
\newcommand{\Ftilde}{\tilde{\mathcal{F}}}
\newcommand{\val}{\mbox{Val}}
\newcommand{\sep}{\mbox{Sep}}
\newcommand{\iso}{\mbox{Iso}}
\newcommand{\dic}{\mbox{Dic}}
\newcommand{\cl}[1]{\mathcal{#1}}
\newcommand{\cltilde}[1]{\tilde{\mathcal{#1}}}
\newcommand{\dr}{\mbox{$\partial$}}
\begin{document}

\setcounter{section}{0}
\setcounter{thm}{0}

\author{Rog\'erio Mol \& Rudy Rosas}
\title{Differentiable equisingularity of holomorphic foliations}
\maketitle

\begin{abstract}
We prove that a  $C^{\infty}$ equivalence between germs holomorphic foliations  at $(\co^2,0)$
establishes a bijection between the sets of formal separatrices   preserving   equisingularity classes. As a consequence, if one of the foliations is of second type, so is the other and they are   equisingular.
\end{abstract}

\footnotetext[1]{ {\em 2000 Mathematics Subject Classification:}
 32S65.}
 \footnotetext[2]{{\em
Keywords.} Holomorphic foliations, vector fields, invariant curves, equidesingularization.}
\footnotetext[3]{Work supported by MATH-AmSud Project CNRS/CAPES/Concytec. First author supported by Universal/CNPq. Second author supported by Vicerrectorado de Investigaci\'on de la Pontificia Universidad Cat\'olica del Per\'u.}

 \medskip \medskip

 \section{Introduction}

A celebrated theorem of Zariski \cite{zariski1932} asserts that two topological equivalent germs of curves at $(\mathbb{C}^2,0)$ are necessarily equisingular, that is, their desingularization by blow-ups are combinatorially isomorphic. In \cite{camacho1984}, the authors prove the following analogous result for holomorphic foliations at $(\mathbb{C}^2,0)$, valid for the generic  class of  \emph{generalized curve} foliations:

\begin{maintheoremI}\label{equidesingularization} Let $\mathcal{F}$ and $\mathcal{F}'$ topologically equivalent germs of holomorphic foliations at $(\mathbb{C}^2,0)$. Suppose that $\mathcal{F}$ is a generalized curve. Then $\mathcal{F}'$ is also  a generalized curve foliation. Besides,   $\mathcal{F}$ and $\mathcal{F}'$ have isomorphic desingularizations.
\end{maintheoremI}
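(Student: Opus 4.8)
The plan is to reduce everything to two inputs about the separatrix set together with a numerical characterization of generalized curves. Write $h$ for the topological equivalence, let $B$ be the separatrix set of $\mathcal{F}$, given by a reduced equation $f=0$, and let $B'$ be that of $\mathcal{F}'$. First I would prove that $h(B)=B'$. The key is that separatrices are topologically distinguished among the leaves: a leaf $L$ lies in a separatrix precisely when its local closure is $L\cup\{0\}$, whereas a generic nearby leaf accumulates onto the separatrices and so has strictly larger closure. Since $h$ is a homeomorphism fixing $0$ that carries leaves of $\mathcal{F}$ to leaves of $\mathcal{F}'$, it preserves this closure property and hence matches the separatrix leaves of the two foliations; passing to closures gives $h(B)=B'$. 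In particular $B$ and $B'$ are ambient-homeomorphic germs of analytic curves, so Zariski's theorem shows they are \emph{equisingular}: their minimal embedded resolutions are combinatorially isomorphic and, in particular, $\mu_0(B)=\mu_0(B')$, where $\mu_0$ denotes the Milnor number.

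The second input is the inequality $\mu_0(\mathcal{G})\ge\mu_0(B_{\mathcal{G}})$, valid for any germ $\mathcal{G}$ with isolated singularity and separatrix $B_{\mathcal{G}}$, between the Milnor number of the foliation (the colength $\dim_{\mathbb{C}}\mathcal{O}_2/(a,b)$ of a generating field $a\partial_x+b\partial_y$) and that of its separatrix, with equality if and only if $\mathcal{G}$ is a generalized curve; the defect $\mu_0(\mathcal{G})-\mu_0(B_{\mathcal{G}})$ is a sum of local contributions of the saddle-nodes occurring in the reduction. Granting moreover that $\mu_0$ is invariant under topological equivalence, so that $\mu_0(\mathcal{F})=\mu_0(\mathcal{F}')$, the first assertion follows at once: since $\mathcal{F}$ is a generalized curve,
\[
\mu_0(B')\ \le\ \mu_0(\mathcal{F}')\ =\ \mu_0(\mathcal{F})\ =\ \mu_0(B)\ =\ \mu_0(B'),
\]
so every inequality is an equality, $\mu_0(\mathcal{F}')=\mu_0(B')$, and therefore $\mathcal{F}'$ is a generalized curve as well.

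For the remaining assertion I would use the basic property of generalized curves: the reduction of singularities of such a foliation coincides with the minimal embedded resolution of its separatrix. Thus the desingularization of $\mathcal{F}$ is the embedded resolution of $B$ and that of $\mathcal{F}'$ is the embedded resolution of $B'$; since $B$ and $B'$ are equisingular, these resolutions are combinatorially isomorphic, and hence $\mathcal{F}$ and $\mathcal{F}'$ have isomorphic desingularizations.

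The genuine obstacles are the two facts used as black boxes, both deeper than the bookkeeping above. The hardest is the topological invariance of $\mu_0(\mathcal{F})$: a topological equivalence matches only unoriented leaves and retains nothing of the time parametrization of a generating vector field, so the index cannot be read off from a Poincar\'e--Hopf argument and must instead be extracted from genuinely topological data---for instance from the link of the foliation on a small sphere, or, more robustly, by an inductive analysis along the blow-up tower once one checks that $h$ lifts to the successive exceptional divisors and respects the algebraic multiplicity at each center. The second delicate point is the rigorous justification of $h(B)=B'$, which needs a precise local description of how leaves accumulate at reduced singularities in order to confirm that the ``smallest closure'' property really singles out the separatrices. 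Both points are cleanest to treat after passing to the resolution, where every singularity is reduced and the local models are explicit.
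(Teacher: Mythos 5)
Your proposal is correct and follows essentially the same route as the paper, which only sketches this result by reducing it to Theorem B plus Zariski's equisingularity theorem and defers the details to \cite{camacho1984}: you match the separatrix sets under the homeomorphism, invoke Zariski, and use the coincidence of the foliation's reduction with the embedded resolution of its separatrix curve. The extra step you supply --- that $\mathcal{F}'$ is again a generalized curve, via $\mu_0(\mathcal{F})\ge\mu_0(\mathrm{Sep}(\mathcal{F}))$ with equality characterizing generalized curves together with the topological invariance of $\mu_0(\mathcal{F})$ --- is precisely the argument of \cite{camacho1984}, and you correctly identify the two genuinely hard inputs that that paper establishes.
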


The proof of this theorem  is based upon the following result, also proved in \cite{camacho1984}:
\begin{maintheoremI}\label{s-desingularization} Let $\mathcal{F}$ be a generalized curve   foliation at $(\mathbb{C}^2,0)$ and let $\emph{Sep}(\mathcal{F})$ be its set of separatrices. Then, the desingularization of  $\emph{Sep}(\mathcal{F})$ is also the desingularization of $\mathcal{F}$.
\end{maintheoremI}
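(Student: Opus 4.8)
The plan is to show that the morphism $\pi$ reducing the singularities of $\mathcal{F}$ simultaneously gives an embedded resolution of $S:=\mathrm{Sep}(\mathcal{F})$, and that neither process demands a blow-up the other can dispense with; this I would carry out by induction on the height of the blow-up tree, checking that the two reductions advance in lockstep. Write $\pi\colon(\widetilde M,D)\to(\mathbb{C}^2,0)$ for the reduction of $\mathcal{F}$ furnished by Seidenberg's theorem, so that $\widetilde{\mathcal{F}}=\pi^{*}\mathcal{F}$ has only reduced singularities; since $\mathcal{F}$ is a generalized curve, none of them is a saddle-node. The first thing to record is the local geometry at such a point $p$: a reduced singularity that is not a saddle-node carries exactly two separatrices, both smooth and mutually transverse. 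Hence, if $p$ is a corner of $D$ these two separatrices are the two local branches of $D$, while if $p$ sits on a single component of $D$ one separatrix is that component and the other is a smooth curve transverse to $D$; the transverse branches produced this way are precisely the strict transforms of the branches of $S$. In either case $S\cup D$ is a normal crossing at $p$.

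The engine of the proof is a comparison of algebraic multiplicities. For a generalized curve one expects the identity
\[
\nu_p(\mathcal{F})\;=\;\nu_p(S_p)-1
\]
to hold at every infinitely near point $p$ of the process, where $\nu_p$ is the algebraic multiplicity and $S_p$ the germ at $p$ of the separatrix (read suitably when the component of $D$ through $p$ is dicritical, in which case it is non-invariant and imposes no condition). Granting this, I would prove the dichotomy that $p$ is a non-reduced singularity of $\widetilde{\mathcal{F}}$ \emph{if and only if} $S_p\cup D$ fails to be a normal crossing at $p$: when $\nu_p(\mathcal{F})\ge 2$ the germ $S_p$ is singular or tangent to $D$ and a further blow-up is forced for both $\mathcal{F}$ and $S$, whereas when $\nu_p(\mathcal{F})=1$ and $p$ is reduced the local picture above already exhibits $S\cup D$ as a normal crossing. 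Feeding this equivalence into the induction shows that $\pi$ makes every singularity of $\widetilde{\mathcal{F}}$ reduced exactly when it renders $S\cup D$ a normal crossing divisor; thus the reduction of $\mathcal{F}$ is an embedded resolution of $S$, and, reading the equivalence the other way, it contracts no component superfluous for $S$, so the two minimal desingularizations coincide.

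I expect the multiplicity comparison to be the main obstacle, and it is exactly here that the hypothesis that $\mathcal{F}$ is a generalized curve becomes indispensable. The delicate point is not the identity at the first level but its \emph{propagation}: one must control how $\nu_p(\mathcal{F})$ and $\nu_p(S_p)$ transform under a single blow-up and verify that they stay locked together. A saddle-node is precisely the mechanism that breaks this synchronization---it can keep $\mathcal{F}$ reduced, so that the reduction of $\mathcal{F}$ halts while the separatrix is still unresolved, or conversely carry a formal weak separatrix whose multiplicity is not reflected in that of $\mathcal{F}$---so excluding saddle-nodes is what forces the equality to persist down the whole tree. Secondary technical burdens will be the behaviour of algebraic multiplicity under blow-up, the bookkeeping of separatrices at corners versus trace points, and the handling of dicritical components so that the non-invariant exceptional divisors do not artificially enlarge the resolution of $S$. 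Once the comparison is in place, the coincidence of the desingularizations, together with minimality in both directions, follows from the inductive synchronization described above.
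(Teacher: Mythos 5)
A point of reference first: the paper itself does not prove Theorem \ref{s-desingularization} --- it quotes it from \cite{camacho1984} --- and the only place where its mechanism is exposed is the proof of Proposition \ref{s-equidesing}, which rests entirely on the key local lemma ``a generalized curve (indeed, second type) foliation with exactly two smooth transversal formal separatrices is simple'', followed by the argument of \cite[Th.~2]{camacho1984}. Your proposal reorganizes this (you run the induction down the reduction of $\mathcal{F}$ and compare with the embedded resolution of $S$ via a multiplicity identity), but it has a gap exactly where that lemma is needed. Your ``dichotomy'' is argued only in the cases $\nu_p(\mathcal{F})\ge 2$ and ``$\nu_p(\mathcal{F})=1$ and $p$ reduced''; the case that carries the theorem is $\nu_p(\mathcal{F})=1$ with $p$ \emph{not} reduced. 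There the identity only gives $\nu_p(S_p)=2$, which is a priori compatible with $S_p\cup D$ being two smooth transversal branches, i.e.\ already a normal crossing --- and if that configuration occurred, the reduction of $\mathcal{F}$ would continue past the resolution of $S$ and the theorem would be false. Ruling it out does not follow from the multiplicity identity, because $\nu_p(\mathcal{F})=1$ does not imply that $p$ is simple: nilpotent linear parts, resonant ratios in $\mathbb{Q}^{+}$ and Poincar\'e--Dulac singularities all have $\nu_p=1$. One must check that none of these can be a generalized curve whose separatrix set is a normal crossing (nilpotent singularities have all separatrices tangent to a single direction; linearizable $\mathbb{Q}^{+}$-resonant ones are dicritical with singular or infinitely many separatrices; Poincar\'e--Dulac singularities produce a saddle-node in their reduction and are therefore excluded). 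That verification is the content of Lemma~1 of \cite{camacho1984} and is absent from your argument.

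The second difficulty is the engine itself. The identity $\nu_p(\mathcal{F})=\nu_p(S_p)-1$ at every infinitely near point, which you rightly flag as the main obstacle, is not available as an input: in \cite{camacho1984}, and in the refinement recorded in the paper as Proposition \ref{prop:Equa-Ba} and Corollary \ref{cor:Equa-Ba}, it is established together with (or as a consequence of) the coincidence of the two desingularizations, so invoking it to prove that coincidence risks circularity unless you supply an independent proof of its propagation under blow-up. Moreover, in the dicritical case the correct statement involves a balanced equation of separatrices with weighted (possibly negative) coefficients rather than $\mathrm{Sep}(\mathcal{F})$ itself, which is infinite; your parenthetical ``read suitably'' does not resolve this. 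As written, the proposal defers both essential difficulties --- the key local lemma and the multiplicity identity --- to expected statements, so it is not yet a proof.
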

In fact, if $\mathcal{F}$ is topologically equivalent to $\mathcal{F}'$, we have that $\textrm{Sep}(\mathcal{F})$ and $\textrm{Sep}(\mathcal{F}')$ are also topological equivalent, since the separatrices of a generalized curve foliation are   convergent. Therefore, Theorem \ref{equidesingularization}  follows from Theorem \ref{s-desingularization} and Zariski's Theorem.
In general, the validity of Theorem \ref{equidesingularization} outside the class of generalized curve foliations is a difficult open problem. Actually, such a result would imply the
 topological invariance of the algebraic multiplicity of a holomorphic foliation, which is also an open problem (see \cite{rosas2009,rosas2010,rosas2016}).   The desingularization of a germ of foliation $\mathcal{F}$ is closely  related to the desingularization of its set of separatrices $\textrm{Sep}(\mathcal{F})$ --- including the purely formal ones ---, although  Theorem \ref{s-desingularization} is not always true. Another   serious difficulty  is the fact that the   topological equivalence does not naturally map   purely formal separatrices of $\mathcal{F}$ into purely   formal separatrices of $\mathcal{F}'$, as in the case of convergent separatrices.

If the equivalence between $\mathcal{F}$ and $\mathcal{F}'$ is supposed to be $C^{\infty}$, a correspondence among formal separatrices of both foliations can be established.
Let $\Phi$ be  such a $C^{\infty}$ equivalence and consider its Taylor series $\hat{\Phi}$ as a real formal diffeomorphism of $(\mathbb{C}^2,0)$.
 Let $S$ be a possibly formal separatrix of $\mathcal{F}$, which can be seen as a parametrized two-dimensional real formal surface at $(\mathbb{C}^2,0)$. Then  the formal composition $\hat{\Phi}(S)$ is a parametrized two-dimensional real formal surface at $(\mathbb{C}^2,0)$. In this setting, we have:

\begin{maintheorem}
 \label{smooth-equisingular}
  Let $\Phi$ be a $C^{\infty}$ equivalence between two germs $\mathcal{F}$ and  $\mathcal{F}'$ of singular holomorphic foliations at $(\mathbb{C}^2,0)$. Let $S$ be a separatrix of $\mathcal{F}$, considered as a parametrized two-dimensional real formal surface at $(\mathbb{C}^2,0)$. Then the following properties hold:
\begin{enumerate} \item The real formal surface $\hat{\Phi}(S)$ is a real formal reparametrization of some separatrix $S'$ of $\mathcal{F}'$, denoted by $S'=\Phi_*(S)$.
\item Let $\mathscr{S}$ be the reduced curve defined as the union of a finite collection $S_1,\ldots,S_m$ of separatrices of  $\mathcal{F}$.  Denote by $\mathscr{S}'$ the reduced curve defined as the union of $\Phi_*(S_1),\ldots,\Phi_*(S_m)$. Then $\mathscr{S}$ and $\mathscr{S}'$ are equisingular.
\end{enumerate}
\end{maintheorem}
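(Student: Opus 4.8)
The engine of the proof is the passage from the genuine $C^{\infty}$ conjugacy to a formal conjugacy of the tangent distributions. Write $\mathcal{F}$ and $\mathcal{F}'$ by germs of holomorphic $1$-forms $\omega$ and $\omega'$, and regard $(\co^2,0)$ as $(\mathbb{R}^4,0)$ with its complex structure. Away from the singular set, the condition that $\Phi$ carry leaves of $\mathcal{F}$ to leaves of $\mathcal{F}'$ is equivalent to the pointwise relation $\Phi^{*}\omega'\wedge\omega\wedge\bar\omega=0$, which says that the complex-valued real $1$-form $\Phi^{*}\omega'$ annihilates the real $2$-plane $\ker\omega=\ker\bar\omega$ tangent to $\mathcal{F}$. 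The first step is to observe that each coefficient of this $3$-form is a $C^{\infty}$ function vanishing on the dense open set $U\setminus\{0\}$, so its Taylor series vanishes identically; hence the formal identity $\hat\Phi^{*}\hat\omega'\wedge\hat\omega\wedge\hat{\bar\omega}=0$ holds, that is, $D\hat\Phi$ formally maps the complex line field $T\mathcal{F}$ into $T\mathcal{F}'$.

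For part (1) I would then parametrize. Choose a formal Puiseux parametrization $\gamma(t)$ of $S$, holomorphic in $t$ and tangent to $\mathcal{F}$, and set $\delta=\hat\Phi\circ\gamma$. Since $\hat\Phi$ depends on the conjugate variables, $\delta$ is a formal function of $t$ and $\bar t$, which is exactly a real formal surface. By the distribution relation, the tangent plane of $\delta$ is carried into the complex line $\ker\omega'$, so $\partial_{t}\delta$ and $\partial_{\bar t}\delta$ are complex-proportional: $\partial_{\bar t}\delta=\lambda\,\partial_{t}\delta$ for a formal $\lambda$. Solving this formal Beltrami equation produces a formal change of parameter $s=s(t,\bar t)$ in which $\delta$ becomes holomorphic, $\delta=\sigma(s)$; the resulting formal holomorphic curve $\sigma$ is tangent to $\mathcal{F}'$ and is therefore the sought separatrix $S'=\Phi_{*}(S)$, of which $\hat\Phi(S)$ is a reparametrization. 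Because $\hat\Phi$ is a formal diffeomorphism, the assignment $S\mapsto S'$ is injective and sends distinct separatrices (in particular purely formal ones) to distinct separatrices.

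For part (2) I would use that the equisingularity type of a reduced, possibly formal, plane curve is its embedded topological type, encoded by the characteristic exponents of each branch together with the pairwise intersection multiplicities. The formal real diffeomorphism $\hat\Phi$ carries each branch $S_i$ onto a reparametrization of $\Phi_{*}(S_i)$, hence preserves the embedded topology of each branch (so the characteristic exponents are preserved) and the linking numbers of the associated knots (so the intersection multiplicities $S_i\cdot S_j$ are preserved). To convert these formal statements into an application of Zariski's theorem, I would invoke finite determinacy of equisingularity: replacing each $S_i$ and each $\Phi_{*}(S_i)$ by convergent curves agreeing with them to sufficiently high order, and truncating $\hat\Phi$ to a genuine germ of homeomorphism, one reduces to honest analytic curves and a genuine topological equivalence, whence $\mathscr{S}$ and $\mathscr{S}'$ are equisingular.

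The main obstacle is the interface between the formal and the genuine/topological worlds, concentrated at the singular point. In part (1) the line field $T\mathcal{F}'$ degenerates at $0$, so the proportionality $\partial_{\bar t}\delta=\lambda\,\partial_{t}\delta$ and the Beltrami reparametrization must be justified through the division (invariance) characterization of separatrices rather than through pointwise tangency, and one must verify that $\sigma$ is a genuine ramified parametrization of a single irreducible separatrix. In part (2) the delicate point is the finite-determinacy reduction: one must quantify how many jets of $\hat\Phi$ and of the $S_i$ control the equisingularity type, so that the truncated data define a bona fide homeomorphism to which Zariski's theorem applies. I expect this quantitative truncation, together with the careful bookkeeping at the singular point, to be where the real work lies.
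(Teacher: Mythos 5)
Your proposal replaces the paper's central mechanism --- genuine characteristic curves lying in actual leaves --- by a purely formal argument with Taylor series, and the two places where you yourself flag ``the real work'' are precisely where the proof is missing, not merely unfinished. For part (1): even granting the formal identity $\hat\Phi^{*}\hat\omega'\wedge\hat\omega\wedge\hat{\bar\omega}=0$ and a division step yielding $\delta^{*}\hat\omega'=0$ for $\delta=\hat\Phi\circ\gamma$, the relation $\partial_{\bar t}\delta=\lambda\,\partial_{t}\delta$ only gives $\lambda$ in the fraction field of $\mathbb{C}[[t,\bar t]]$, and the formal Beltrami equation is solvable by a formal \emph{diffeomorphism} of $(\mathbb{R}^2,0)$ only if $\lambda$ is a power series with $|\lambda(0)|\neq 1$. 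Controlling this requires knowing that $d\Phi(0)$ or $d\Phi(0)\circ J$ is $\mathbb{C}$-linear (the paper's Proposition \ref{rosas}), which you never invoke, together with an analysis of the initial parts when the separatrix has multiplicity $n>1$ (where $d\delta(0)=0$ and $\lambda(0)$ is not even defined by the linear part of $\Phi$ alone). Moreover, ``$\delta$ becomes holomorphic, hence is the sought separatrix'' conflates two further unproved steps: that the holomorphic reparametrization is an irreducible Puiseux parametrization of a single formal invariant curve, and that the change of parameter is invertible --- the paper needs its factorization Lemma \ref{lemarepa}, Corollary \ref{cororepa}, and a two-sided multiplicity comparison (using $\Phi^{-1}$) to get $n=n'$. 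The paper avoids the formal integration problem altogether: it proves $F'(\hat\Phi\circ S)=0$ by restricting to rays $t\mapsto\eta t$ and observing that $S(\eta t)$ is the Taylor expansion of an \emph{actual} leaf (sectorial asymptotics of saddle-nodes, Example \ref{normal curves}), whose image under the honest map $\Phi$ lies in an actual leaf of $\mathcal{F}'$; the independence of the resulting separatrix from the chosen ray is itself a theorem (Theorem \ref{inv}), proved by a connectedness argument. Your formal route, if completable, would bypass all of this, but the completion is exactly the hard part and you have not supplied it.

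For part (2) the gap is sharper: you justify preservation of the intersection multiplicities $S_i\cdot S_j$ by ``linking numbers of the associated knots,'' but purely formal branches --- in particular the weak separatrices that are the whole point of the theorem --- have no associated knots, and a formal \emph{real} (non-holomorphic) change of parameter does not a priori preserve any complex-analytic invariant. The reduction to convergent representatives and a genuine homeomorphism is not a routine finite-determinacy remark: in the paper's Theorem \ref{formal-equisingular} one must compare the $C^{\infty}$ singular surface $\Phi_n(\xi_n)$ (which is \emph{not} a complex curve) with the analytic curve $\xi'_n$, and this is done by blowing up to straighten each branch and hand-building a correcting homeomorphism supported near the strict transform, so that Zariski's theorem can be applied to $\mathscr{C}_n$ and $\mathscr{C}'_n$. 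As written, your part (2) assumes the conclusion (preservation of characteristic exponents and intersection numbers) in order to set up the truncation. The overall architecture --- formal tangency, reparametrization, truncation plus Zariski --- is parallel to the paper's, but both pivotal steps are asserted rather than proved, and at least the first one appears to require reintroducing the analytic input (sectorial leaves and the $\mathbb{C}$-linearity of the differential) that your formal shortcut was meant to avoid.
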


As a consequence of Theorem \ref{smooth-equisingular}, if $\cl{F}$ and $\cl{F}'$ are $C^{\infty}$ equivalent foliations, then the sets of separatrices $\sep(\cl{F})$ and $\sep(\cl{F}')$ have isomorphic desingularizations.
Taking into account that the property described in Theorem \ref{s-desingularization} is valid for the larger class of {\em second type} foliations (see \cite{mattei2004}), we obtain the following equidesingularization result
for $C^{\infty}$ equivalent foliations:

\begin{maintheorem}
\label{equi-2nd-type-thm}
 Let $\cl{F}$ and $\cl{F}'$ be two germs of holomorphic foliations
at $(\co^{2},0)$ equivalent by a germ of $C^{\infty}$ diffeomorphism.
If $\cl{F}$ is a foliation of second type, then $\cl{F}'$ is of
second type. Moreover, $\cl{F}$ and $\cl{F}'$  are equisingular.
\end{maintheorem}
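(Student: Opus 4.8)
The plan is to combine the separatrix correspondence of Theorem \ref{smooth-equisingular} with the resolution-theoretic characterization of second type foliations. Write $\Phi_*\colon \sep(\cl{F})\to\sep(\cl{F}')$ for the map of part (1); applying the same construction to $\Phi^{-1}$ produces its inverse, so $\Phi_*$ is a bijection, and by part (2) it preserves the equisingularity type of every finite union of separatrices. As already observed after Theorem \ref{smooth-equisingular}, this forces $\sep(\cl{F})$ and $\sep(\cl{F}')$ to have isomorphic desingularizations; denote by $\pi\colon(\tilde M,D)\to(\co^2,0)$ and $\pi'\colon(\tilde M',D')\to(\co^2,0)$ their minimal reductions, together with a combinatorial isomorphism $\sigma$ matching the dual trees, the divisor components, and the infinitely near points carrying the strict transforms of corresponding separatrices. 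Now I invoke the extension to second type foliations (see \cite{mattei2004}) of Theorem \ref{s-desingularization}: since $\cl{F}$ is of second type, $\pi$ is also the reduction of singularities of $\cl{F}$, that is, $\pi^*\cl{F}$ is a reduced foliation. Hence, through $\sigma$, the surface $\tilde M'$ underlying the separatrix reduction of $\cl{F}'$ is, combinatorially, the reduction of singularities of $\cl{F}$.

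It remains to show that $\cl{F}'$ is of second type, for this immediately yields $\pi'$ as its reduction of singularities and hence the equisingularity of $\cl{F}$ and $\cl{F}'$. In general one always has that the reduction of $\sep(\cl{G})$ is dominated by the reduction of $\cl{G}$, with equality precisely when $\cl{G}$ is of second type; thus it suffices to prove that $\pi'^*\cl{F}'$ is already reduced, i.e. that no further blow-ups — no non-elementary singularities forcing ``dead'' divisor components — are needed. To access this, I would lift $\Phi$ through the two reductions, blow-ups being available in the $C^{\infty}$ category and $C^{\infty}$ equivalences lifting to them once their centers are matched by $\sigma$, obtaining a $C^{\infty}$ equivalence $\tilde\Phi\colon(\tilde M,D)\to(\tilde M',D')$ between the reduced foliation $\pi^*\cl{F}$ and $\pi'^*\cl{F}'$, compatible with $\sigma$ and carrying $D$ to $D'$. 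Because singular points are preserved by any topological equivalence, $\tilde\Phi$ induces a bijection $\sing(\pi^*\cl{F})\to\sing(\pi'^*\cl{F}')$, and the problem reduces to a local statement at each such point $p\mapsto p'$: the germ of $\pi^*\cl{F}$ at $p$ is reduced, and one must deduce that the germ of $\pi'^*\cl{F}'$ at $p'$ is reduced as well.

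This local invariance is where I expect the main difficulty to lie, and where the $C^{\infty}$ hypothesis, rather than mere topological equivalence, is essential. The task is to show that a $C^{\infty}$ equivalence preserving the local branch of the divisor cannot carry an elementary singularity, a non-degenerate point or a saddle-node, to a non-elementary one: neither to a singularity of algebraic multiplicity $\geq 2$ nor to a nilpotent one. I would exclude the first possibility through the invariance of the algebraic multiplicity under the lifted equivalence (cf. \cite{rosas2009,rosas2010,rosas2016}), and the second by analyzing the local separatrix structure, since at a reduced point the separatrices meet $D$ in a normal crossings configuration that $\tilde\Phi$ transports to $p'$, a feature incompatible with the blow-up behaviour of a nilpotent germ. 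Once every singularity of $\pi'^*\cl{F}'$ is seen to be reduced, $\pi'$ is the reduction of singularities of $\cl{F}'$, so $\cl{F}'$ is of second type and $\text{Desing}(\cl{F})\cong\text{Desing}(\cl{F}')$ through $\sigma$, establishing equisingularity.
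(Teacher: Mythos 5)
Your opening paragraph matches the paper's first step (Proposition \ref{s-equising}: $\F$ and $\F'$ are $\cl{S}$-equisingular, and the $\cl{S}$-reduction of $\F$ is its reduction because $\F$ is of second type). But the core of your argument --- lifting $\Phi$ through the two reduction morphisms to a $C^{\infty}$ equivalence $\tilde\Phi$ between $\pi^*\F$ and $\pi'^*\F'$ and then arguing locally at each singularity of the strict transforms --- has a genuine gap. A $C^{\infty}$ diffeomorphism of $(\co^2,0)$ does not lift to a $C^{\infty}$ (nor even $C^1$) diffeomorphism of the blown-up surfaces: in a blow-up chart the lift involves quotients such as $\Phi_2(u,tu)/\Phi_1(u,tu)$, and already the second-order terms of $\Phi$ contribute summands of the form $\bar{u}^{\,2}/u$, which extend continuously over the exceptional divisor but are not differentiable there. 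So the lifted map is at best a homeomorphism near $E$, and your subsequent appeal to the invariance of the algebraic multiplicity at the lifted singular points (Theorem \ref{inv-mult-alg}, which requires $C^1$ regularity) is not available; with only a topological equivalence upstairs you are back at the open problem cited in the introduction. A secondary issue: you use that $\cl{S}$-desingularizability is \emph{equivalent} to being of second type, whereas the paper proves, and needs, only one implication (Proposition \ref{s-equidesing}).

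The paper avoids any lifting by a purely numerical argument at the origin: by Proposition \ref{prop:Equa-Ba}, $\nu_{0}(\F)=\nu_{0}(\hat{F})-1+\tau_{0}(\F)$ for a balanced equation of separatrices $\hat{F}$, and $\F$ is of second type if and only if $\tau_{0}(\F)=0$. The multiplicity $\nu_{0}(\F)$ is a $C^{1}$ invariant by Theorem \ref{inv-mult-alg} (a statement at the origin, no lifting required), and $\nu_{0}(\hat{F})$ is invariant because Theorem \ref{smooth-equisingular} makes corresponding separatrix sets equisingular and Lemma \ref{dic-correspondence} guarantees that $\Phi_{*}\hat{F}$ is again a balanced equation. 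Hence $\tau_{0}$ is invariant, $\F'$ is of second type, and equisingularity follows from Propositions \ref{s-equidesing} and \ref{s-equising}. To salvage your local approach you would have to establish the regularity of the lift, which is essentially as hard as the problems this machinery is designed to circumvent.
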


This paper is structured in the following way. In   sections \ref{foliations} and \ref{secondtype} we present basic definitions  and some
properties of second type foliations. Next, in sections \ref{pseudoanalytic} and \ref{pseudoanalyticcomplex},
we introduce the notion of characteristic curves for germs of holomorphic foliations. These are one-dimensional real curves intrinsically associated to separatrices --- both convergent and formal. Characteristic curves    are invariant by $C^{\infty}$ equivalences and
 enable us to establish a one to one correspondence among separatrices of two $C^{\infty}$ equivalent foliations. This is done in section \ref{correspondence}. Next, in section \ref{formalreal}, we introduce the
concept of formal real equivalence of formal complex curves and we show that this notion implies equisingularity
(Theorem \ref{formal-equisingular}). In section \ref{equivalence}, we present the proof of Theorem \ref{smooth-equisingular}. Finally, in section \ref{theproof}, we accomplish the proof of Theorem \ref{equi-2nd-type-thm}.

 \section{Foliations, separatrices and desingularization}
 \label{foliations}

A germ of singular holomorphic foliation $\cl{F}$ at $(\co^2,0)$  is the object defined by an equation of the form $\omega=0$, where $\omega$ is a $1-$form $\omega = P(u,v)du + Q(u,v) dv$ --- or, equivalently, by the orbits
of the germ of holomorphic vector field $
{\bf v} = - Q(u,v) \dr / \dr u + P(u,v) \dr / \dr v$ ---,
where $P,Q \in {\mathbb C}\{u,v\}$   are relatively prime, defining what we call a {\em reduced} equation.
Two reduced
$1-$forms $\omega$ and $\tilde{\omega}$ define the same foliation if and only if
$\omega = u \, \tilde{\omega}$ for some unity $u \in {\mathbb C}\{u,v\}$.
In general, we can assume that a $1-$form $\omega = P(u,v)du + Q(u,v) dv$ defines a foliation by taking as reduced equation $\omega/R =0$, where
$R = \gcd(P,Q)$.

A considerable amount  of information on the local topology and dynamics of a  foliation is given
by their {\em separatrices}.
A  separatrix for a foliation $\cl{F}$ is an invariant formal irreducible curve.
 Algebraically, it is
 defined by an irreducible formal series
$f\in {\mathbb C}[[u,v]]$,  with $f(0,0)=0$,   satisfying
$$
\omega\wedge df=fh d u\wedge d v
$$
for some formal series
$h\in {\mathbb C}[[u,v]]$.
 If   $f$ can be taken in ${\mathbb C}\{u,v\}$,  the separatrix is said to be {\em analytic} or {\em convergent}. We denote by $\sep(\cl{F})$ the set of separatrices of $\cl{F}$ at $0 \in \co^{2}$.

The singularity $0 \in \co^{2}$ for $\cl{F}$ is said to be {\em simple}      if the
 linear part ${\rm D}  {\bf v}(0)$ of a vector field ${\bf v}$ inducing $\cl{F}$  has eigenvalues $\lambda_1, \lambda_2 \in \co$ meeting   one of the following conditions:

 \par \noindent \underline{\bf Case 1:} $\lambda_1 \lambda_2 \neq 0$ and $\lambda_1 / \lambda_2 \not \in \qe^+$. We say that $0 \in \co^{2}$
is  {\em non-degenerate} or {\em complex hyperbolic}.
The set of  separatrices $\sep(\cl{F})$ is formed by two
transversal branches, both of them analytic.

 \par \noindent \underline{\bf Case 2:} $\lambda_1 \neq 0$ and  $\lambda_2= 0$. This is called a {\em saddle-node} singularity,
for which there are formal coordinates $(u,v)$ such that $\cl{F}$
is induced by

\begin{equation}
\label{saddle-node-formal}
\omega = v(1 + \lambda u^{k})du + u^{k+1} dv,
\end{equation}
where $\lambda \in \co$ and $k \in \ze_{>0}$.
The curve $\{u=0\}$, corresponding to the tangent direction defined by the non-zero eigenvalue,   defines an analytic separatrix, called {\em strong}, whereas $\{v=0\}$  is tangent to a possibly formal separatrix, called {\em weak} or {\em central}.
The integer $k+1 > 1$ is called  {\em tangency index} of $\F$ with respect to the
weak separatrix, or simply {\em weak index}, and will be denoted by 
$\text{Ind}_{0}^{w}(\F)$.

A global foliation $\cl{G}$ on a holomorphic surface $M$ corresponds to the assignment, for
 $p \in M$, of compatible local foliations $\cl{G}_{p}$. For instance, a  holomorphic
$1-$form $\omega$  on $M$ defines a foliation $\cl{G}$ by taking $\cl{G}_{p}$ as the local foliation
defined by the germification of $\omega$ at $p$.
Let $\cl{F}$ be a local foliation at $(\co^2,0)$ defined by the $1-$form $\omega$ and let
  $\pi: (M,E) \to (\co^2,0)$  be a sequence of punctual blow-ups starting at $0 \in \co^2$.
The pull-back $1-$form $\pi^*\omega$ defines a foliation
 $\Ftilde = \pi^{*} \cl{F}$ with isolated singularities on $(M,E)$  called the {\em strict transform} of $\cl{F}$ by $\pi$.
We have the definition:

\begin{defn}
Let $\cl{G}$ be a foliation on $(M,E)$, where $E$ is a normal crossings divisor. With respect to the pair
$(\cl{G},E)$, we say that   $p \in E$ is
\begin{enumerate}
\item a {\em regular} point,    if there
are local analytic coordinates $(u,v)$ at $p$ such that $E\subset \{uv=0\}$
and $\cl{G}: du = 0$;

 \item  a {\em simple  singularity},    if
  $p$ is a simple singularity for $\cl{G}$
and $E \subset \sep_{p}(\cl{G})$.

\end{enumerate}
\end{defn}

This allows us to present the notion of  reduction of singularities of a foliation with respect
to a normal crossings divisor:

\begin{defn}{ Let $\cl{G}$ be a foliation on $(M,E)$, where $E$ is a normal crossings divisor.
We say that  $(\cl{G},E)$ is {\em reduced} or {\em desingularized} if all points $p \in E$ are either  regular or simple singularities
for the pair $(\cl{G},E)$. A  {\em reduction of singularities} or {\em desingularization}  for a germ of foliation $\cl{F}$ at $(\co^{2},0)$
is a morphism $\pi: (M,E) \to
(\co^2,0)$, formed by a composition of punctual blow-ups,  such that $(\pi^{*}\cl{F},E)$
is reduced.
}\end{defn}

For   a local foliation
$\cl{F}$ at $(\co^{2},0)$,  there always exists a reduction of singularities
(see   \cite{seidenberg1968} and \cite{camacho1984}).
Besides,   there exists a {\em minimal}  one, in the sense that it factorizes,
by an additional sequence of blow-ups, any other reduction of singularities of $\cl{F}$.
In the sequel, whenever we refer to a reduction of singularities, we mean
a minimal one.

Let   $\pi: (M,E) \to
(\co^2,0)$ be a reduction of singularities for $\cl{F}$ and denote
$\cltilde{F} =\pi^{*}{F}$. The divisor  $E = \pi^{-1}(0)$
is a finite union of components which are  embedded projective lines, crossing normally at {\em corners}.
The regular points of $D$ are called {\em trace points}.
A component $D \subset E$ can be:
\begin{enumerate}
\item {\em non-dicritical}, if $D$ is $\cltilde{F}$-invariant. In this case, $D$ contains a finite number of simple singularities. Each trace singularity carries a separatrix   transversal to $E$, whose projection by $\pi$ is a branch  in $\sep(\cl{F})$.
\item {\em dicritical}, if $D$ is not $\cltilde{F}$-invariant. The definition
of desingularization gives that $D$ may intersect only non-dicritical components and that $\cltilde{F}$ is everywhere transverse do $D$. The $\pi$-image of a local leaf of $\cltilde{F}$ at each trace point of $D$ belongs to $\sep(\cl{F})$.
\end{enumerate}
For each $B \in \sep(\cl{F})$ we associate the trace point $\tau_{E}(B) \in E$ given by $\pi^*B\cap E$.
We define $\sep(D) = \{B \in  \sep(\cl{F}); \tau_{E}(B) \in D\}$ as the set of branches \emph{attached} to the
component $D \subset E$.
We thus have a decomposition $\sep (\cl{F}) = \iso (\cl{F}) \cup \dic (\cl{F})$, where
\[\iso(\cl{F}) = \bigcup_{ D \ \text{non-dicritical}} \sep(D) \qquad {\rm and} \qquad
 \dic(\cl{F}) = \bigcup_{D \ \text{dicritical}} \sep(D).\]
Separatrices in $\iso(\cl{F})$, known as {\em isolated}, can be additionally classified in two types.
A branch $B \in \iso(\cl{F})$ is \emph{strong} or of \emph{Briot and Bouquet} type if either
$\tau_{E}(B)$ is a non-degenerate singularity or  if $\tau_{E}(B)$ is a saddle-node
singularity with $\pi^*B$ as its strong separatrix. On the other hand,
$B \in \iso(\cl{F})$ is \emph{weak} if $\tau_{E}(B)$ is a saddle-node singularity whose weak separatrix is $\pi^*B$. This classification engenders the decomposition $\iso(\cl{F}) = \iso^{s}(\cl{F}) \cup \iso^{w}(\cl{F})$, where notations are self-evident. Note that $\iso(\cl{F})$
 is a finite set and all   purely
formal separatrices of $\cl{F}$ are contained in $\iso^{w}(\cl{F})$.

On the other hand, if non-empty, $\dic(\cl{F})$ is an infinite set of analytic
separatrices, called
{\em dicritical}. A foliation  $\cl{F}$ may be classified either as  {\em non-dicritical}
--- when $\sep(\cl{F})$ is finite, which happens when $\dic(\cl{F}) = \emptyset$ ---
or as {\em dicritical}, otherwise.

Let  $\F$ be a foliation at $(\co^2,0)$ with reduction of singularities $\pi: (M,E) \to (\co^2,0)$.  The {\em dual tree} associated
to  $\F$ is the acyclic, double weighted, directed graph  $\mathbb{A}^{*}(\F)$  defined in the following way:
\begin{enumerate}
\item to each component $D \subset E$ we associate a vertex $v(D)$;
\item to $v(D)$ we associate weights $n_{1}(D) \in \ze_{<0}$ and $n_{2}(D) \in \nat \cup \{\infty\}$, where
 $n_{1}(D) = D \cdot D$  is the self-intersection number of $D$ in $M$ and
 $n_{2}(D) = \# \sep(D)$;
\item there is an arrow from   $v(D_{2})$ to $v(D_{1})$   if and only if $D_{2} \cap D_{1} \neq \emptyset$ and  $D_{2}$   results from a blow-up at a point in $D_{1}$.
\end{enumerate}
The {\em valence} of a component $D \subset E$ is the number $\val(D)$ of arrows of $\mathbb{A}^{*}(\F)$ touching $v(D)$.
In other words, it is the total  number of  components of $E$ intersecting $D$ other from $D$ itself.

\begin{defn}{
Two foliations $\F$ and $\F'$ are said to be {\em equisingular} or {\em equireducible}
   if  $\mathbb{A}^{*}(\F) = \mathbb{A}^{*}(\F')$.
} \end{defn}

Let  $\F$ be a foliation at $(\co^2,0)$.
A sequence of blow-ups $\pi: (M,E) \to
(\co^2,0)$   {\em desingularizes} $\sep(\cl{F})$ if the transforms
$\pi^{*}S$ of branches $S \in \sep(\cl{F})$ are all disjoint and transverse to $E$.
We call this map,
which is supposed to be minimal, an {\em $\cl{S}$-desingularization}  or {\em $\cl{S}$-reduction} for $\F$.
Following the same procedure as in the construction of $\mathbb{A}^{*}(\F)$,  we  define the
{\em $\cl{S}$-dual tree} of $\F$, denoted as  $\mathbb{A}^{*}_{\cl{S}}(\F)$, as the dual tree associated
to the  {\em $\cl{S}$-desingularization} of $\F$. With this at hand, we have the following
 definitions:

\begin{defn}{
A germ of foliation $\F$ is {\em $\mathcal{S}$-desingularizable} or {\em $\mathcal{S}$-reducible} if $\mathbb{A}^{*}_{\cl{S}}(\F) = \mathbb{A}^{*}(\F)$, that is,
 an  $\cl{S}$-desingularization actually is a desingularization for $\F$.
}\end{defn}

\begin{defn}{
Two germs of foliations $\F$ and $\F'$ at $(\co^2,0)$ are \emph{$\cl{S}$-equisingular} or \emph{$\cl{S}$-equireducible} if $\mathbb{A}^{*}_{\cl{S}}(\F) = \mathbb{A}^{*}_{\cl{S}}(\F')$, that is,
if their sets of separatrices have equivalent desingularizations.
}\end{defn}

\section{Second type foliations}
\label{secondtype}

We keep the notation
$\pi: (M,E) \to (\co^2,0)$
for the reduction of singularities of $\cl{F}$  and
$\cltilde{F} = \pi^{*}\F$ for the strict transform foliation.
  We say that a saddle-node singularity  for
$\cltilde{F}$  is  {\em tangent} if its weak separatrix is contained in $E$.
Non-tangent saddle-nodes are also known as {\em well-oriented}. The
following definition is due to J.-F. Mattei and E. Salem (see \cite{mattei2004} and also
  \cite{cano2015}, \cite{genzmer2007}
 \cite{genzmer2016}):

\begin{defn}{  A foliation $\cl{F}$ at $(\co^{2},0)$ is of {\em second type} if there
are no tangent saddle-nodes in its reduction of singularities.
}\end{defn}

The main property of second type foliations to be used in this article is the following result,
which already appeared in
\cite[Th. 3.1.9]{mattei2004} in the non-dicritical case:
\begin{prop}
\label{s-equidesing}
Second type foliations are $\cl{S}$-desingularizable.
\end{prop}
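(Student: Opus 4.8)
The plan is to compare the minimal reduction of singularities $\pi_{\F}\colon(M,E)\to(\co^{2},0)$ of $\F$ with its minimal $\cl{S}$-desingularization $\pi_{\cl{S}}$, and to show that they coincide, so that $\mathbb{A}^{*}_{\cl{S}}(\F)=\mathbb{A}^{*}(\F)$. First I would note that, for an \emph{arbitrary} foliation, $\pi_{\F}$ is itself an $\cl{S}$-desingularization: once $(\pi_{\F}^{*}\F,E)$ is reduced, every trace singularity carries exactly one separatrix transverse to $E$, every corner has as its two local separatrices the components of $E$ themselves, and along a dicritical component the transverse leaves through distinct trace points are disjoint; hence the strict transforms of the branches of $\sep(\F)$ are smooth, pairwise disjoint and transverse to $E$. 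By minimality of $\pi_{\cl{S}}$ this already shows that $\pi_{\F}$ factorizes through $\pi_{\cl{S}}$, so that $\mathbb{A}^{*}_{\cl{S}}(\F)$ is obtained from $\mathbb{A}^{*}(\F)$ by contracting some vertices. Therefore it suffices to prove the reverse inequality, namely that for a second type foliation the morphism $\pi_{\cl{S}}$ \emph{already} reduces $\F$. Granting this, $\pi_{\cl{S}}$ is a reduction of singularities of $\F$, hence factorizes through the minimal one $\pi_{\F}$; the two morphisms then factorize through each other and must coincide.

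The core is thus to show that, if $\F$ is of second type, then $(\til{\F},E)$ is reduced, where $\til{\F}=\pi_{\cl{S}}^{*}\F$. I argue by contradiction, assuming a point $p\in E$ which is neither regular nor a simple singularity of $(\til{\F},E)$. The decisive observation is that the separatrices of $\til{\F}$ at $p$ are already in resolved position: any local separatrix of $\til{\F}$ through $p$ that is not contained in $E$ projects by $\pi_{\cl{S}}$ to a branch of $\sep(\F)$, whose strict transform is by construction smooth and transverse to $E$, and distinct such branches are disjoint. Hence at $p$ there is at most one local separatrix transverse to $E$, and none if $p$ is a corner (otherwise $\sep(\F)$ would pass through a corner); all remaining local separatrices are the invariant components of $E$ through $p$. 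Using this, together with the facts recorded in the excerpt that dicritical components meet only non-dicritical ones and are everywhere transverse to $\til{\F}$, a case check against the regular and simple models leaves only two possibilities for the offending point: (A) a corner of two invariant components at which the eigenvalue ratio lies in $\qe^{+}$, or (B) a non-simple singularity on the trace of an invariant component carrying at most one transverse branch.

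The heart of the argument — and the step I expect to be the main obstacle — is the local claim that \emph{continuing the reduction of singularities at such a $p$ necessarily produces a tangent saddle-node}, contradicting the hypothesis. The mechanism is that no new transverse analytic separatrix may appear during this reduction, for it would project to a branch of $\sep(\F)$, already resolved by $\pi_{\cl{S}}$; so the only analytic separatrices available throughout are the (already separated) transverse branch, if any, and the components of $E$. In case (A) this forces the resonant singularity to be non-linearizable, since a linearizable node with ratio in $\qe^{+}$ is dicritical and would carry infinitely many separatrices rather than the two components of $E$; the resonant obstruction then surfaces, along the Seidenberg reduction, as a saddle-node whose weak separatrix lies inside the exceptional divisor, that is, a tangent one. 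In case (B) the excess of algebraic multiplicity of $\til{\F}$ at $p$ over that of its meagre separatrix configuration must likewise be absorbed by the reduction into saddle-nodes whose weak separatrices fall in the newly created components of $E$. In both situations the Camacho--Sad indices along the components of $E$ provide the bookkeeping that localizes a vanishing weak index on $E$, exhibiting the tangent saddle-node. Making this absorption precise is the delicate point; once it is established it contradicts second type and, in the non-dicritical setting, recovers \cite[Th.~3.1.9]{mattei2004}. The dicritical case adds nothing essentially new, as dicritical components are forced into $\pi_{\cl{S}}$ by the infinitely many separatrices they carry, are transverse to $\til{\F}$ at their trace points, and meet $E$ only at corners already covered by the analysis of case (A).
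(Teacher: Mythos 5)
Your argument is essentially the paper's: both reduce the statement to the local claim that, after an $\cl{S}$-desingularization, a point of $E$ that is neither regular nor a simple singularity would force a tangent saddle-node in the continued reduction, contradicting the second type hypothesis. The ``delicate point'' you flag is precisely what the paper disposes of by noting that the proof of Lemma 1 of \cite{camacho1984} applies verbatim to show that a second type foliation with exactly two smooth transversal formal separatrices is simple, the global conclusion then following as in Theorem 2 of that reference.
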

 \begin{proof}
 We first remark that the same proof of Lemma 1 in \cite{camacho1984} applies to the following
  more general statement:   a second type foliation   with exactly two
 smooth transversal formal separatrices is simple. The result then follows
by the same arguments as in the proof of \cite[Th. 2]{camacho1984}.
\end{proof}

We establish the following   definition:
\begin{defn}
Two germs of foliations $\F$ and $\F'$   at $(\co^{2},0)$  are topologically (respectively, $C^{\infty}$) equivalent if there is
a germ of homeomorphism (respectively, $C^{\infty}$ diffeomorphism)  $\Phi: (\co^{2},0) \to (\co^{2},0) $  which sends leaves of $\F$ on leaves of $\F'$.
\end{defn}

The family of second type foliations contains the subclass of {\em generalized curve} foliations,
characterized by the absence  saddle-nodes in the
 desingularization. The property of being
a generalized curve foliation is a topological invariant and topological equivalent generalized curves
are equisingular. This is the main result in \cite{camacho1984}. Indeed, the topology of  a generalized curve foliation
  is closely related to its separatrix set,  entirely formed by
convergent curves. The aim in Theorem \ref{equi-2nd-type-thm} is to prove the equisingularity property for
the  family of second type foliations.  If all separatrices of two  second type foliations are convergent, then their topological equivalence implies  equisingularity. Actually,
there is a correspondence  between homeomorphic separatrices
for both foliations and the result follows from
 Zariski's equisingularity for curves in conjunction with the fact that a second type foliation
 is $\mathcal{S}$-desingularizable.
 However, in principle, a merely continuous equivalence map does not track   purely formal separatrices. For this reason, in the statement of Theorem \ref{equi-2nd-type-thm}, the regularity hypothesis on the equivalence map
 is strengthened and we ask for $C^{\infty}$ equivalences.

The following object was defined in \cite{genzmer2007}. A more thorough study on its
properties is found in \cite{genzmer2016}. Again, $\F$  is a germ of foliation at $(\mathbb{C}^{2},0)$
with reduction process $\pi: (M,E) \to
(\co^2,0)$.
\begin{defn}{\rm
\label{def-balanced-set}
A \emph{balanced equation of separatrices} for $\F$  is a formal meromorphic
function $\hat{F}$ whose associated  divisor is
\begin{equation}
\label{divisor-bal-eq}
 (\hat{F})_{0}-(\hat{F})_{\infty} \ = \
\sum_{S\in {\rm Iso}(\F)} (S)+ \sum_{S\in {\rm Dic}(\F)}\ a_{S} (S),
\end{equation}
where the coefficients $a_{S} \in \mathbb{Z}$  are  non-zero only for finitely many $S \in
\dic(\F)$, and, for each  dicritical  component $D \subset E$,
 the following equality holds:
\begin{equation}
\label{eq:1-balance}
\sum_{S \in {\rm Sep(D)}}a_{S} = 2- \val(D).
\end{equation}
Note that if $\F$ is non-dicritical, then
a balanced equation  is an equation for the set of
 separatrices.
}\end{defn}

We recall that the multiplicity $\rho(D)$   of a component $D \subset E$ is defined as
the algebraic multiplicity of a curve $\gamma$ at $(\mathbb{C}^{2},0)$ such that $\pi^{*} \gamma$ is transversal
to $D$  outside a corner of $E$. We have the following definition:
\begin{defn}{
 The \emph{tangency excess} of $\F$ along $E$ is the number
\begin{equation}
\label{eq-tau}
\tau_{0}(\F)=\sum_{q \in \textsl{SN}(\F) }\rho(D_{q})(\text{Ind}_{q}^{w}(\Ftilde)  -1),
\end{equation}
where $\textsl{SN}(\F) \subset E$ denotes the set of all tangent saddle-nodes, $D_q$ is the component of $E$ containing the weak separatrix of $\Ftilde$ at $q \in \textsl{SN}(\F)$
and $\text{Ind}_{q}^{w}(\Ftilde) > 1$ is the weak index.
}\end{defn}
Note that $\tau_{0}(\F) \geq 0$ and, by definition, $\tau_{0}(\F) = 0$ if and only if $\textsl{SN}(\F) = \emptyset$, that is, if and only if  $\F$ is of second type.

The {\em algebraic multiplicity} of a foliation $\F$ having   $\omega = Pdx + Qdy = 0$ as a reduced equation   is the integer $\nu_{0}(\F) = \min(\nu_{0}(P),\nu_{0}(Q))$.
The tangency excess   measures the extent that a balanced equation of separatrices computes the
 algebraic multiplicity of a foliation. This is expressed in the following fact, whose proof is found in \cite{genzmer2007}:

\begin{prop}
\label{prop:Equa-Ba} Let $\F$ be a  foliation on $(\mathbb{C}^{2},0)$ with $\hat{F}$ as a balanced equation of separatrices.   Denote by $\nu_{0}(\F)$ and $\nu_{0}(\hat{F})$ their
algebraic multiplicities. Then
\[\nu_{0}(\F)=\nu_{0}(\hat{F})-1+\tau_{0}(\F).\]
\end{prop}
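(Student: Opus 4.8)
The plan is to evaluate both algebraic multiplicities through the minimal reduction of singularities $\pi\colon (M,E)\to(\co^{2},0)$ and to reduce the identity to a comparison of two explicit expressions, each a weighted sum over the components of $E$ carrying the weights $\rho(D)$. The whole content then becomes the discrepancy between the way the separatrix curve and the pulled-back form $\pi^{*}\omega$ ``see'' each component, a discrepancy which will turn out to be supported exactly on the tangent saddle-nodes.

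First I would settle the curve side. For a single branch $B$ whose strict transform meets $E$ transversally at a trace point of a component $D$, one has $\nu_{0}(B)=\rho(D)$ (this is the defining property of $\rho(D)$, extended to an arbitrary branch through its proximity data). Since $\nu_{0}$ is additive on branches and the divisor of $\hat F$ is $\sum_{S\in\iso(\F)}(S)+\sum_{S\in\dic(\F)}a_{S}(S)$, this gives
\[
\nu_{0}(\hat F)=\sum_{S\in\iso(\F)}\rho(D_{\tau_{E}(S)})+\sum_{S\in\dic(\F)}a_{S}\,\rho(D_{\tau_{E}(S)}),
\]
where $D_{\tau_{E}(S)}$ is the component carrying the trace point $\tau_{E}(S)$. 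Grouping the terms by the component to which each branch is attached, and using the balance condition $\sum_{S\in\sep(D)}a_{S}=2-\val(D)$ on every dicritical $D$, I obtain
\[
\nu_{0}(\hat F)=\sum_{D\ \text{non-dicritical}}\rho(D)\,\#\sep(D)+\sum_{D\ \text{dicritical}}\rho(D)\bigl(2-\val(D)\bigr).
\]

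The heart of the argument is the companion formula for the foliation,
\[
\nu_{0}(\F)+1=\sum_{D\ \text{non-dicritical}}\rho(D)\Bigl(\#\sep(D)+\!\!\!\sum_{q\in \textsl{SN}(\F)\cap D}\!\!\!(\text{Ind}_{q}^{w}(\Ftilde)-1)\Bigr)+\sum_{D\ \text{dicritical}}\rho(D)\bigl(2-\val(D)\bigr),
\]
which I would prove by induction on the number of blow-ups in $\pi$. For the first blow-up $\sigma$, a degree count on $E_{1}\cong\pn{1}$ is the source of the extra $+1$: in the non-dicritical case $\sigma^{*}\omega$ vanishes to order $\nu_{0}(\F)$ along $E_{1}$, while the tangent cone it determines on $E_{1}$ is of degree $\nu_{0}(\F)+1$, whose zeros distribute among the singular points of $\sigma^{*}\F$ on $E_{1}$ as transverse separatrices plus tangency defects; in the dicritical case the order jumps to $\nu_{0}(\F)+1$ and $E_{1}$ contributes the term $\rho(E_{1})(2-\val(E_{1}))$. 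The inductive step localizes at each singular point $p_{j}\in E_{1}$ of $\sigma^{*}\F$ and applies the hypothesis to the germ there, the only delicate bookkeeping being the behaviour of the weights $\rho$ at the corners created against the strict transform of $E_{1}$. The decisive local input is the contribution of a \emph{tangent} saddle-node $q$: there the weak separatrix is absorbed into $E$ and does \emph{not} descend to a branch of $\sep(\F)$, so $q$ is invisible to $\hat F$, whereas the tangency of $\pi^{*}\omega$ with $E$ along the component $D_{q}$ exceeds the separatrix prediction by exactly $\text{Ind}_{q}^{w}(\Ftilde)-1$; weighted by $\rho(D_{q})$ and summed, this is precisely $\tau_{0}(\F)$.

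Subtracting the first formula from the second yields $\nu_{0}(\F)+1-\nu_{0}(\hat F)=\tau_{0}(\F)$, which is the assertion. I expect the main obstacle to be the second formula, and within it the precise local computation at a tangent saddle-node, showing that the weak index $\text{Ind}_{q}^{w}(\Ftilde)$ measures exactly the excess tangency of $\pi^{*}\omega$ with the exceptional divisor over what the genuine descending separatrices account for, together with the correct propagation of the multiplicities $\rho(D)$ through the corners against $\tilde E_{1}$ in the induction. When $\F$ is a generalized curve this excess vanishes identically and the formula specializes to the classical identity $\nu_{0}(\F)=\nu_{0}(\hat F)-1$, a useful consistency check on the normalizations.
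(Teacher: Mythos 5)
The paper does not actually prove this proposition --- it is quoted from \cite{genzmer2007} --- so there is no internal argument to compare against; your sketch follows the standard route of that reference: compute both $\nu_{0}(\hat F)$ and $\nu_{0}(\F)+1$ as $\rho$-weighted sums over the components of the reduction tree and observe that the discrepancy is supported on the tangent saddle-nodes, where the tangency index of $\Ftilde$ with the invariant component containing the weak separatrix is $\text{Ind}_{q}^{w}(\Ftilde)$ rather than the $1$ predicted by the descending separatrix. The outline is correct, including the curve-side identity (immediate from the definition of $\rho(D)$ and the balance condition) and the local inputs for the induction; the only imprecisions worth fixing are that in your second formula the inner sum should run over tangent saddle-nodes $q$ with $D_{q}=D$ (a corner saddle-node lies on two components but must be counted only for the one containing its weak separatrix, consistently with the definition of $\tau_{0}$), and that the induction itself --- in particular the corner bookkeeping for $\rho$ and the dicritical contribution $\rho(D)(2-\val(D))$ --- is asserted rather than carried out, which is acceptable for a proposal but is where all the remaining work lies.
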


We have, as a consequence:
\begin{cor}
\label{cor:Equa-Ba}
With the above notation,
\[\nu_{0}(\F)=\nu_{0}(\hat{F})-1\]
if and only if $\F$ is a second type foliation.
\end{cor}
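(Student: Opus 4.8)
The plan is to read the statement off directly from Proposition \ref{prop:Equa-Ba}, so the proof will be essentially a one-line deduction. That proposition supplies the identity $\nu_{0}(\F)=\nu_{0}(\hat{F})-1+\tau_{0}(\F)$; rearranging it gives $\nu_{0}(\F)-\bigl(\nu_{0}(\hat{F})-1\bigr)=\tau_{0}(\F)$, so the asserted equality $\nu_{0}(\F)=\nu_{0}(\hat{F})-1$ holds if and only if $\tau_{0}(\F)=0$. The entire content of the corollary is therefore the translation of the vanishing of the tangency excess into a structural property of $\F$.

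To carry this out I would first invoke the remark stated immediately before the corollary, namely that $\tau_{0}(\F)\geq 0$ and that $\tau_{0}(\F)=0$ precisely when $\textsl{SN}(\F)=\emptyset$. This is itself immediate from the defining formula \eqref{eq-tau}: the summands $\rho(D_{q})\,(\text{Ind}_{q}^{w}(\Ftilde)-1)$ are nonnegative, since each multiplicity $\rho(D_q)$ is positive and each weak index satisfies $\text{Ind}_{q}^{w}(\Ftilde)>1$, so the sum vanishes exactly when it is empty, i.e. when there are no tangent saddle-nodes. By the definition of second type foliation, the condition $\textsl{SN}(\F)=\emptyset$ is exactly the statement that $\F$ is of second type. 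Chaining the two equivalences — the equality being equivalent to $\tau_{0}(\F)=0$, and $\tau_{0}(\F)=0$ being equivalent to $\F$ being of second type — yields the result. There is no real obstacle here: the corollary is a purely formal consequence of the additivity formula in Proposition \ref{prop:Equa-Ba} together with the nonnegativity and definitional characterization of $\tau_{0}(\F)$, all of which are available from the preceding material.
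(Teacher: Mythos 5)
Your proof is correct and matches the paper's (implicit) argument exactly: the corollary is stated as an immediate consequence of Proposition \ref{prop:Equa-Ba} together with the remark preceding it that $\tau_{0}(\F)\geq 0$ and $\tau_{0}(\F)=0$ if and only if $\F$ is of second type. Nothing is missing.
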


\section{Pseudo-analytic curves}
\label{pseudoanalytic}

\begin{defn}  Consider $\gamma:[0,\epsilon)\rightarrow\mathbb{R}^k$ ($k\in\mathbb{N})$ with $\gamma(0)=0$. We say that the series
$$\hat{\gamma}=\sum\limits_{j=1}^{\infty}a_jt^j\,(a_j\in\mathbb{R}^k)$$
 is the Taylor series of $\gamma$ at $0\in\mathbb{R}$ if, for each $n\in\mathbb{N}$, there is a function  $\gamma_n:[0,\epsilon)\to \mathbb{R}^k$ with $|\gamma_n(t)|=o(t^n)$ and  such that $$\gamma(t)=\sum\limits_{j=1}^{n}a_jt^j+\gamma_n(t).$$
We say that $\hat{\gamma}$  is \emph{non-degenerate} if  $a_j\neq 0$ for some $j\in\mathbb{N}$.
\end{defn}
\begin{prop}\label{tainv} Suppose that $\gamma:[0,\epsilon)\to \mathbb{R}^k$ has a non-degenerate Taylor series $\hat{\gamma}$ at $0\in\mathbb{R}$. Let $U$  and $U'$ be neighborhoods of $0\in\mathbb{R}^k$ such that $U$ contains the image of $\gamma$. Let $\Phi:U\to U'$ be a $C^{\infty}$ diffeomorphism with $\Phi(0)=0$. Then the curve $\Phi\circ\gamma$ has a non-degenerate Taylor series at $0\in\mathbb{R}$ which is given by the formal composition $\hat{\Phi}\circ\hat{\gamma}$, where $\hat{\Phi}$ is the Taylor series of $\Phi$ at $0\in\mathbb{R}^k$.
\end{prop}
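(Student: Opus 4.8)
The strategy is to substitute the asymptotic expansion defining $\hat\gamma$ into Taylor's formula for the smooth map $\Phi$, using Peano's form of the remainder, and to keep track of orders through the elementary estimate $|\gamma(t)|=O(t)$. This estimate is immediate from the first-order truncation $\gamma(t)=a_{1}t+\gamma_{1}(t)$ with $\gamma_{1}(t)=o(t)$. Write the vector-valued Taylor series of $\Phi$ at the origin as $\hat\Phi=\sum_{|\alpha|\ge 1}c_{\alpha}x^{\alpha}$ with $c_{\alpha}\in\mathbb{R}^{k}$; it has no constant term since $\Phi(0)=0$, and its linear part $L=\mathrm{D}\Phi(0)$ is invertible because $\Phi$ is a diffeomorphism. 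As $\hat\gamma$ has vanishing constant term, the formal substitution $\hat\Phi\circ\hat\gamma$ is a well-defined element of $\mathbb{R}[[t]]^{k}$, and I denote by $T_{n}(t)$ its truncation up to order $n$.

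I would fix an arbitrary $n\in\mathbb{N}$ and prove that $T_{n}$ is the degree-$n$ Taylor polynomial of $\Phi\circ\gamma$; since $n$ is arbitrary, this is precisely the assertion. By Taylor's theorem applied to $\Phi$ at order $n$, one has $\Phi(x)=P_{n}(x)+R_{n}(x)$, where $P_{n}$ is the truncation of $\hat\Phi$ up to degree $n$ and $R_{n}(x)=o(|x|^{n})$. Substituting $x=\gamma(t)$ and using $|\gamma(t)|=O(t)$ together with $\gamma(t)\to 0$, the remainder satisfies $R_{n}(\gamma(t))=o(|\gamma(t)|^{n})=o(t^{n})$, so it is harmless at order $n$.

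It then remains to expand the polynomial part $P_{n}(\gamma(t))$, which is a sum of monomials $c_{\alpha}\gamma(t)^{\alpha}$ with $1\le|\alpha|=d\le n$, each factor $\gamma_{i}(t)$ being $O(t)$. Replacing every factor $\gamma_{i}(t)$ by its truncation to order $n-d+1$ plus a remainder $o(t^{n-d+1})$ and expanding the product, each term containing at least one remainder factor is $o(t^{(n-d+1)+(d-1)})=o(t^{n})$, while the purely polynomial term reproduces, after discarding its part of degree $>n$ (which is $o(t^{n})$), exactly the order-$n$ truncation of the formal monomial $c_{\alpha}\hat\gamma^{\alpha}$; indeed, a term of total degree $\le n$ in a product of $d$ series of order $\ge 1$ uses, from each factor, only coefficients up to degree $n-d+1$. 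Summing over $\alpha$ and invoking the purely formal fact that, because $\hat\gamma$ has order $\ge 1$, the terms of $\hat\Phi\circ\hat\gamma$ up to order $n$ depend only on $P_{n}$ and on the order-$n$ part of $\hat\gamma$, I obtain $P_{n}(\gamma(t))=T_{n}(t)+o(t^{n})$. Combined with the previous paragraph this yields $\Phi(\gamma(t))=T_{n}(t)+o(t^{n})$, so $\hat\Phi\circ\hat\gamma$ is the Taylor series of $\Phi\circ\gamma$.

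For non-degeneracy, let $j_{0}$ be the least index with $a_{j_{0}}\neq 0$, so that $\hat\gamma(t)=a_{j_{0}}t^{j_{0}}+\cdots$. The lowest-order contribution to $\hat\Phi\circ\hat\gamma$ comes from the linear part, namely $L(a_{j_{0}})\,t^{j_{0}}$, all other contributions having order $\ge 2j_{0}>j_{0}$. Since $L$ is invertible and $a_{j_{0}}\neq 0$, the coefficient $L(a_{j_{0}})$ is nonzero, so $\hat\Phi\circ\hat\gamma$ is non-degenerate. I expect the main obstacle to lie in the order bookkeeping of the third paragraph—selecting, monomial by monomial, the truncation order $n-d+1$ for the components of $\gamma$ so that all mixed terms land in $o(t^{n})$—rather than in any deeper structural difficulty; it is the estimate $|\gamma(t)|=O(t)$ that makes the smooth remainder $R_{n}$ negligible and holds the whole argument together.
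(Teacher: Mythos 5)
Your proof is correct. The paper explicitly leaves this proposition's proof to the reader, so there is no argument of the authors to compare against; your truncation-plus-Peano-remainder bookkeeping, with the single estimate $|\gamma(t)|=O(t)$ controlling both the smooth remainder ($R_n(\gamma(t))=o(|\gamma(t)|^n)=o(t^n)$) and the mixed terms in the expansion of $P_n(\gamma(t))$, is exactly the standard argument they evidently intend, and it mirrors the order-counting they do carry out explicitly in the proof of Proposition~\ref{blow up}. The non-degeneracy step via invertibility of $\mathrm{D}\Phi(0)$ and the observation that all nonlinear contributions have order at least $2j_0>j_0$ is also correct.
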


This proposition, whose proof is left to the reader, allows us to establish the following definition:

\begin{defn} Let $M$ be a $C^{\infty}$ manifold of dimension $k\in\mathbb{N}$ and consider the $C^{\infty}$ curve $\gamma:[0,\epsilon)\to M$ with $\gamma(0)=p\in M$.
 We say that $\gamma$ is \emph{pseudo-analytic} at $p\in M$ if, for some $C^{\infty}$ chart $\psi$ with $\psi(p)=0\in\mathbb{R}^k$, the curve $\psi\circ\gamma$ has a non-degenerate Taylor series at $0\in\mathbb{R}$.
\end{defn}

As a direct consequence of Proposition \ref{tainv} we have:

\begin{prop} Let $M$ and $M'$ be $C^{\infty}$ manifolds of dimension $k\in\mathbb{N}$ and let $\Phi:M\to M'$ be a $C^{\infty}$ diffeomorphism with $\Phi(p)={p'}$. Suppose that $\gamma:[0,\epsilon)\to M$ is pseudo-analytic at $p=\gamma(0)$. Then $\Phi\circ\gamma$ is pseudo-analytic at $p'\in M'$.
\end{prop}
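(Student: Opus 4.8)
The plan is to reduce the statement to its Euclidean counterpart, Proposition \ref{tainv}, by passing to local charts. Since $\gamma$ is pseudo-analytic at $p$, by definition there is a $C^{\infty}$ chart $\psi$ of $M$, defined on a neighborhood $V$ of $p$ with $\psi(p)=0\in\mathbb{R}^k$, such that the curve $\psi\circ\gamma$ has a non-degenerate Taylor series at $0\in\mathbb{R}$. I would then choose any $C^{\infty}$ chart $\psi'$ of $M'$, defined on a neighborhood $V'$ of $p'$ with $\psi'(p')=0\in\mathbb{R}^k$; recall that to establish pseudo-analyticity of $\Phi\circ\gamma$ it suffices to exhibit \emph{one} chart that works, so it is enough to show that $\psi'\circ(\Phi\circ\gamma)$ has a non-degenerate Taylor series at $0$.

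First I would set $\Psi=\psi'\circ\Phi\circ\psi^{-1}$. Since $\Phi$ is a $C^{\infty}$ diffeomorphism with $\Phi(p)=p'$ and $\psi,\psi'$ send $p,p'$ to the origin, $\Psi$ is a $C^{\infty}$ diffeomorphism from a neighborhood $U$ of $0\in\mathbb{R}^k$ onto its image $U'$, with $\Psi(0)=0$. Next, because the Taylor series at $0$ of a curve depends only on its germ, I may shrink $\epsilon$ so that the image of $\psi\circ\gamma$ lies inside $U\cap\psi(V)$; this restriction does not alter the (non-degenerate) Taylor series of $\psi\circ\gamma$. Now Proposition \ref{tainv}, applied to the curve $\psi\circ\gamma$ and the diffeomorphism $\Psi$, yields that $\Psi\circ(\psi\circ\gamma)$ has a non-degenerate Taylor series at $0$, given by the formal composition of the respective Taylor series.

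Finally I would observe that
\[
\Psi\circ\psi\circ\gamma=\psi'\circ\Phi\circ\psi^{-1}\circ\psi\circ\gamma=\psi'\circ(\Phi\circ\gamma),
\]
so the curve $\psi'\circ(\Phi\circ\gamma)$ has a non-degenerate Taylor series at $0\in\mathbb{R}$. Since $\psi'$ is a chart at $p'$ with $\psi'(p')=0$, this is precisely the assertion that $\Phi\circ\gamma$ is pseudo-analytic at $p'$, completing the argument.

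The only point requiring care — and the sole mild obstacle — is the matching of domains: the hypothesis of Proposition \ref{tainv} requires the image of the curve to be contained in the domain of the diffeomorphism. This is dispatched by the remark that both pseudo-analyticity and the Taylor series are germ invariants at the endpoint $0$, so restricting the parameter interval $[0,\epsilon)$ is harmless; apart from this bookkeeping, the proposition is a direct transcription of Proposition \ref{tainv} into the manifold setting.
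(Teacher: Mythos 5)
Your argument is correct and is exactly the intended one: the paper states this proposition without proof, as ``a direct consequence of Proposition \ref{tainv},'' and your chart-transfer argument (applying Proposition \ref{tainv} to $\Psi=\psi'\circ\Phi\circ\psi^{-1}$, after the harmless shrinking of $\epsilon$) is precisely that direct consequence spelled out.
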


Next we show that  the pseudo-analytic property is invariant under real blow-ups.

\begin{prop}\label{blow up}Suppose that $\gamma:[0,\epsilon)\to M$ is injective and pseudo-analytic at $p=\gamma(0)$. Let $\pi:\tilde{M}\to M$ be the  punctual real blow-up at $p\in M$. Then there exists  $\tilde{p}\in\pi^{-1}(p)$   such the curve $\tilde{\gamma}=\pi^{-1}\circ\gamma:(0,\epsilon)\to\tilde{M}$ can be continuously extended by defining $\tilde{\gamma}(0)=\tilde{p}$. Moreover, the extended curve $\tilde{\gamma}:[0,\epsilon)\to\tilde{M}$ is injective and pseudo-analytic at $\tilde{p}$. Clearly, this proposition holds if $\pi$ is any finite composition of real blow-ups at $p\in M$.
\end{prop}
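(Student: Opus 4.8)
The plan is to reduce the statement to a computation in a single chart, where the punctual real blow-up becomes the polar (oriented) model, and to lift $\gamma$ by separating its radial and angular parts. First, using a $C^{\infty}$ chart on $M$ with $\psi(p)=0$ and the fact that pseudo-analyticity is intrinsic (Proposition \ref{tainv}), I would assume $M=\mathbb{R}^{k}$, $p=0$, and that $\gamma$ has non-degenerate Taylor series $\hat{\gamma}=\sum_{j\ge 1}a_{j}t^{j}$. Let $m\ge 1$ be the least index with $a_{m}\neq 0$. Since $\gamma$ is $C^{\infty}$ and vanishes to order $m$ at $0$, Hadamard's lemma gives a factorization $\gamma(t)=t^{m}\delta(t)$ with $\delta$ a $C^{\infty}$ curve whose Taylor series is $\sum_{j\ge 0}a_{m+j}t^{j}$ and whose constant term $\delta(0)=a_{m}$ is nonzero. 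Modelling the blow-up locally as $\pi\colon S^{k-1}\times[0,\infty)\to\mathbb{R}^{k}$, $\pi(v,r)=rv$, with exceptional divisor $E=\pi^{-1}(0)=S^{k-1}\times\{0\}$, the lift of $\gamma$ off $E$ is $\tilde{\gamma}(t)=(v(t),r(t))$ with
\[ r(t)=|\gamma(t)|=t^{m}|\delta(t)|,\qquad v(t)=\frac{\gamma(t)}{|\gamma(t)|}=\frac{\delta(t)}{|\delta(t)|}, \]
and I would set $\tilde{p}=(v_{0},0)$ with $v_{0}=a_{m}/|a_{m}|$.

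The crux is to show that $\tilde{\gamma}$ has a non-degenerate Taylor series at $\tilde{p}$. Shrinking $\epsilon$ so that $\delta$ does not vanish on $[0,\epsilon)$, the maps $x\mapsto|x|$ and $x\mapsto x/|x|$ are $C^{\infty}$ on a neighbourhood of the nonzero vector $a_{m}=\delta(0)$; hence $|\delta|$ and $\delta/|\delta|$ are $C^{\infty}$ curves and possess genuine Taylor series, with $|\delta(0)|=|a_{m}|\neq 0$ and $(\delta/|\delta|)(0)=v_{0}$. Consequently $r(t)=t^{m}|\delta(t)|$ has a Taylor series whose coefficient of $t^{m}$ equals $|a_{m}|\neq 0$, while $v(t)$ has a Taylor series with value $v_{0}$ at $t=0$. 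Choosing a $C^{\infty}$ chart $\psi'$ at $\tilde{p}$ with $\psi'(\tilde{p})=0$ whose last coordinate is the radial function $r$ and whose remaining coordinates are centred angular coordinates on $S^{k-1}$ about $v_{0}$, the composite $\psi'\circ\tilde{\gamma}$ is $(y(t),r(t))$, which therefore carries a Taylor series; this series is non-degenerate because its radial component already has the nonzero coefficient $|a_{m}|$ at order $m$. I expect this to be the main, if routine, obstacle: checking that radial normalization produces bona fide Taylor series and that non-degeneracy is \emph{carried by the radius}, so that the conclusion survives even when the direction $v(t)$ is asymptotically constant, i.e.\ when the curve enters $E$ along a fixed direction.

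Finally I would record continuity, injectivity, and the reduction to compositions. As $t\to 0^{+}$ one has $r(t)\to 0$ and $v(t)\to v_{0}$, so $\tilde{\gamma}(t)\to\tilde{p}$ and the extension is continuous. On $(0,\epsilon)$, injectivity of $\gamma$ together with $\gamma(0)=0$ gives $\gamma(t)\neq 0$, while $\pi$ restricts to a diffeomorphism $\tilde{M}\setminus E\to M\setminus\{0\}$; hence $\tilde{\gamma}=\pi^{-1}\circ\gamma$ is injective on $(0,\epsilon)$, and since $\tilde{\gamma}(0)=\tilde{p}\in E$ whereas $r(t)>0$ for $t>0$, injectivity extends to $[0,\epsilon)$. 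The last assertion follows by induction: the extended curve $\tilde{\gamma}$ again meets the hypotheses, being injective and pseudo-analytic at $\tilde{p}$, so the proposition may be applied at $\tilde{p}$ to the next blow-up, and any finite composition is obtained by iterating finitely many times.
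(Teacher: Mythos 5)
Your argument is correct, but it runs along a genuinely different track from the paper's. The paper normalizes the leading Taylor coefficient by a linear shear so that $a_1^{\nu}\neq 0$ and $a_j^{\nu}=0$ for $j\geq 2$, works in the directional chart of the blow-up where the lift is $\bigl(\gamma_1,\gamma_2/\gamma_1,\ldots,\gamma_k/\gamma_1\bigr)$, and then proves by a direct $o(t^{n})$ estimate that the formal quotient of the Taylor series of $\gamma_j$ by that of $\gamma_1$ is the Taylor series of the actual quotient. You instead pass to the polar model $S^{k-1}\times[0,\infty)\to\mathbb{R}^{k}$, factor $\gamma(t)=t^{m}\delta(t)$ with $\delta(0)=a_m\neq 0$ by Hadamard's lemma, and read off smoothness and non-degeneracy of the lift from the radial coordinate $r(t)=t^{m}|\delta(t)|$; this avoids the division lemma entirely, makes the lifted curve manifestly $C^{\infty}$ up to the divisor, and locates non-degeneracy cleanly in the radius. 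Two remarks on the trade-off. First, your model is the \emph{oriented} blow-up (a manifold with boundary), whereas the chart the paper uses is that of the projective real blow-up; near $(v_0,0)$ the transition map between the two models is smooth with invertible differential, so pseudo-analyticity transfers by Proposition \ref{tainv}, but you should say this (or note that the statement is insensitive to which model of the real blow-up is meant). Second, the paper's quotient-of-Taylor-series computation is not wasted effort: it is reused verbatim in Proposition \ref{charac-curve-blowup} for \emph{complex} blow-ups, where one needs to identify the Taylor series of the lift with the strict transform of the asymptotic formal curve, and there the polar trick has no direct analogue. So your proof is shorter and more conceptual for this proposition in isolation, while the paper's does double duty for the complex case.
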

\begin{proof} Take $C^{\infty}$ coordinates $(x_1,\ldots,x_k)$ at $p\in M$. Then $\gamma=(\gamma_1,\ldots,\gamma_k)$ has a Taylor series $$\left( \sum_{j=\nu}^{\infty}a_{1}^j t^j,\ldots,\sum_{j=\nu}^{\infty}a_{k}^j t^j\right)$$  with $(a_1^{\nu},\ldots,a_k^{\nu})\neq 0$. Of course we can assume that $a_1^{\nu}\neq 0$. If  $a_{2}^{\nu}\neq 0$, define the diffeomorphism $$\psi\colon (x_1,\ldots,x_k)\mapsto (x_1,x_2-\frac{a_2^{\nu}}{a_1^{\nu}}x_1,x_3,\ldots,x_k)$$ and consider  $$\psi\circ\gamma(t)=(\tilde{\gamma_1}(t), \tilde{\gamma_2}(t),\ldots,\tilde{\gamma_k}(t)).$$ Then it is easy to see that $\textrm{ord}(\hat{\tilde{\gamma_2}})>\nu$, where $\hat{\tilde{\gamma_2}}$ is the Taylor series of $\tilde{\gamma_2}$ .   Therefore, by changing coordinates if necessary we can assume that $a_1^{\nu}\neq 0$ and $a_j^{\nu}=0$ for  $j=2,\ldots,k$. Thus,  for $t>0$ small we have $$\pi^{-1}\circ\gamma(t)=\left( \gamma_1(t),\frac{\gamma_2(t)}{\gamma_1 (t)},\ldots,\frac{\gamma_k(t)}{\gamma_1(t)}\right),$$ which clearly tends to $(0,\ldots,0)$ as $t\rightarrow 0$.  Since $\gamma_1$ has a non-degenerate Taylor series, it suffices to show that
$\frac{\gamma_j(t)}{\gamma_1(t)}$ has a Taylor series for $j=2,\ldots,k$.  We will show that the formal quotient  $\sum\limits_{j=0}^{\infty}q_j t^j$  of
$\sum\limits_{j=\nu}^{\infty}a_{2}^j t^j$ by $\sum\limits_{j=\nu}^{\infty}a_{1}^j t^j$ is the Taylor series of $\frac{\gamma_2}{\gamma_1}$ at $t=0$; the other cases are equal.  Fix $n\in\mathbb{N}$. It is sufficient to show that $$R:=\frac{\gamma_2(t)}{\gamma_1(t)}-\sum\limits_{j=0}^{n}q_j t^j=o(t^n).$$ We can express $$\gamma_1(t)=\sum_{j=\nu}^{\nu+n}a_{1}^j t^j+f_1(t),$$
$$\gamma_2(t)=\sum_{j=\nu}^{\nu+n}a_{2}^j t^j +f_2(t),$$ where $f_1(t),f_2(t)=o(t^{\nu+n})$. Then
\begin{align}\frac{R}{t^n}&=\frac{\gamma_2(t)-\gamma_1(t)\sum\limits_{j=0}^{n}q_j t^j}{t^n\gamma_1(t)}\\
&=\frac{\big(\sum_{j=\nu}^{\nu+n}a_{2}^j t^j +f_2(t)\big)-\big(\sum_{j=\nu}^{\nu+n}a_{1}^j t^j+f_1(t)\big)\sum\limits_{j=0}^{n}q_j t^j}{t^n\gamma_1(t)}\\
&=\frac{o(t^{\nu+n})}{t^n\gamma_1(t)}=\frac{o(t^{\nu+n})}{t^{\nu+n}}\frac{1}{{\gamma_1(t)}/{t^{\nu}}}\rightarrow 0\textrm{ as } t\rightarrow 0.
\end{align}

\end{proof}

\section{Pseudo-analytic  curves in complex surfaces }
\label{pseudoanalyticcomplex}

Let $V$ be a complex regular surface and consider a curve $\gamma:[0,\epsilon)\to V$ pseudo-analytic at $p=\gamma(0)\in V$. In local holomorphic coordinates at $p$ the Taylor series of $\gamma$  is given as $$\hat{\gamma}=(\sum\limits_{j=1}^{\infty}a_j t^j+i\sum\limits_{j=1}^{\infty}b_j t^j, \sum\limits_{j=1}^{\infty}c_j t^j+i\sum\limits_{j=1}^{\infty}d_j t^j ),$$  where $a_j,b_j,c_j,d_j\in\mathbb{R}$. Then, if we set $\alpha_j=a_j+i b_j$ and $\beta_j=c_j+id_j$, we can write $$\hat{\gamma}=(\sum\limits_{j=1}^{\infty}\alpha_j t^j, \sum\limits_{j=1}^{\infty}\beta_j t^j ).$$ Thus, converting the real variable $t \in \re$ into a complex variable $z \in \co$,  we can view  the Taylor series of $\gamma$ as the formal complex parametrized curve
 $$(\sum\limits_{j=1}^{\infty}\alpha_j z^j, \sum\limits_{j=1}^{\infty}\beta_j z^j )$$at $p \in V$.

\begin{defn} Let $\gamma$ be a pseudo-analytic curve at $p\in V$ and let $\mathcal{C}$ be a formal parametrized complex curve at $p\in V$. We say that $\gamma$ is \emph{asymptotic to $\mathcal{C}$ at $p\in V$}    if $\hat{\gamma}$ is a formal reparametrization of $\mathcal{C}$, that is, if there exists a formal invertible complex series $\hat{\psi}=\sum\limits_{j=1}^{\infty}\sigma_j w^j$  such that $\hat{\gamma}=\mathcal{C}\circ\hat{\psi}$.
\end{defn}

\begin{prop}
\label{charac-curve-blowup}
Let $\gamma$ be an injective pseudo-analytic curve at $p\in V$, which is asymptotic to a formal complex curve $\mathcal{C}$  at $p\in V$. Let $\pi\colon \tilde{V}\to V $ be the punctual complex blow-up at $p\in V$.  Then there exists  $\tilde{p}\in\pi^{-1}(p)$   such the curve $\tilde{\gamma}=\pi^{-1}\circ\gamma:(0,\epsilon)\to\tilde{V}$ can be continuously extended by defining $\tilde{\gamma}(0)=\tilde{p}$. Moreover:
\begin{enumerate}
\item \label{item1} the extended curve $\tilde{\gamma}:[0,\epsilon)\to\tilde{V}$ is injective and pseudo-analytic at $\tilde{p}$;
\item $\tilde{\gamma}$ is asymptotic at $\tilde{p}$ to the strict transform of ${\mathcal{C}}$ by $\pi$.
\end{enumerate}
Clearly, this proposition holds if $\pi$ is any finite composition of complex blow-ups at $p\in V$.
\end{prop}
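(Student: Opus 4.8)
The plan is to imitate the proof of Proposition \ref{blow up}, working in one affine chart of the complex blow-up so that the construction of $\tilde{\gamma}$ reduces to a formal division of power series, and then to transport the asymptotic curve $\mathcal{C}$ through that same division. First I would fix local holomorphic coordinates $(u,v)$ centered at $p$, writing
$$\hat{\gamma}=\Big(\sum_{j\geq 1}\alpha_j z^j,\ \sum_{j\geq 1}\beta_j z^j\Big),\qquad \alpha_j,\beta_j\in\co,$$
and let $\nu\geq 1$ be the order of $\hat{\gamma}$, i.e.\ the least index with $(\alpha_\nu,\beta_\nu)\neq(0,0)$; this is finite since $\hat{\gamma}$ is non-degenerate. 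After possibly interchanging $u$ and $v$ we may assume $\alpha_\nu\neq 0$, and after the holomorphic linear change $(u,v)\mapsto(u,v-(\beta_\nu/\alpha_\nu)u)$, which fixes $p$, we may further assume $\beta_\nu=0$, so that $\mathrm{ord}\,\hat{\gamma}_1=\nu<\mathrm{ord}\,\hat{\gamma}_2$. In the chart $(u,w)$ of the complex blow-up given by $(u,v)=(u,uw)$, the exceptional divisor is $\{u=0\}$ and, for $t>0$ small,
$$\tilde{\gamma}(t)=\pi^{-1}\circ\gamma(t)=\Big(\gamma_1(t),\ \tfrac{\gamma_2(t)}{\gamma_1(t)}\Big),$$
which is well defined because $\gamma_1(t)/t^{\nu}\to\alpha_\nu\neq 0$ forces $\gamma_1(t)\neq 0$ near $0$.

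For the first property, the component $\gamma_1$ already has a non-degenerate Taylor series, so it suffices to show that $\gamma_2/\gamma_1$ admits one. This is exactly the computation at the end of the proof of Proposition \ref{blow up}: the formal quotient $\hat{q}=\sum_{j\geq 0}q_j z^j$ of $\sum\beta_j z^j$ by $\sum\alpha_j z^j$ is the Taylor series of $\gamma_2/\gamma_1$, the argument being verbatim the same since dividing by a complex-valued function of the form $\alpha_\nu t^{\nu}+o(t^{\nu})$ behaves just as in the real case. Because $\mathrm{ord}\,\hat{\gamma}_2>\nu$ we have $q_0=0$, hence $\gamma_2(t)/\gamma_1(t)\to 0$ and $\tilde{\gamma}$ extends continuously by $\tilde{\gamma}(0)=\tilde{p}:=(0,0)$, the origin of the chart; moreover $\hat{\tilde{\gamma}}=(\hat{\gamma}_1,\hat{q})$ is non-degenerate, so $\tilde{\gamma}$ is pseudo-analytic at $\tilde{p}$. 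Injectivity of the extension is immediate, as $\pi$ is a biholomorphism off the exceptional divisor and $\gamma$ is injective with $\gamma(t)\neq p$ for $t>0$; thus $\tilde{\gamma}(t)\notin\pi^{-1}(p)$ for $t>0$ while $\tilde{\gamma}(0)\in\pi^{-1}(p)$.

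For the second property, write the asymptotic hypothesis as $\hat{\gamma}=\mathcal{C}\circ\hat{\psi}$ with $\mathcal{C}=(\mathcal{C}_1,\mathcal{C}_2)$ and $\hat{\psi}=\sum_{j\geq 1}\sigma_j z^j$, $\sigma_1\neq 0$. Since composition with the invertible series $\hat{\psi}$ preserves order, in the normalized coordinates $\mathcal{C}$ satisfies $\mathrm{ord}\,\mathcal{C}_1=\nu<\mathrm{ord}\,\mathcal{C}_2$, so in the chart $(u,w)$ the strict transform of $\mathcal{C}$ is parametrized by $\tilde{\mathcal{C}}=(\mathcal{C}_1,\mathcal{C}_2/\mathcal{C}_1)$. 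The key algebraic point is that $f\mapsto f\circ\hat{\psi}$ is a ring automorphism of $\co[[z]]$ extending to its fraction field, and therefore commutes with formal division; hence
$$\hat{\tilde{\gamma}}=\Big(\mathcal{C}_1\circ\hat{\psi},\ \tfrac{\mathcal{C}_2\circ\hat{\psi}}{\mathcal{C}_1\circ\hat{\psi}}\Big)=\Big(\mathcal{C}_1,\ \tfrac{\mathcal{C}_2}{\mathcal{C}_1}\Big)\circ\hat{\psi}=\tilde{\mathcal{C}}\circ\hat{\psi}.$$
As $\hat{\psi}$ is invertible, this exhibits $\hat{\tilde{\gamma}}$ as a formal reparametrization of $\tilde{\mathcal{C}}$, so $\tilde{\gamma}$ is asymptotic at $\tilde{p}$ to the strict transform of $\mathcal{C}$. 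The assertion for a finite composition of blow-ups then follows by induction, each step being of this form.

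The genuinely routine step is the existence of the Taylor series of $\gamma_2/\gamma_1$, already settled in Proposition \ref{blow up}. I expect the point demanding the most care to be the second property: verifying that the geometric limit point $\tilde{p}$ is precisely the point prescribed by the strict transform, and that the \emph{same} reparametrization $\hat{\psi}$ carries $\mathcal{C}$ to $\tilde{\mathcal{C}}$. Both hinge on the preservation of order under $\hat{\psi}$ and on the commutation of reparametrization with formal division.
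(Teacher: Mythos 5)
Your proposal is correct and follows essentially the same route as the paper: normalize holomorphic coordinates so that the second component has higher order, express $\tilde{\gamma}$ in the affine chart as $(\gamma_1,\gamma_2/\gamma_1)$, and invoke the formal-quotient computation of Proposition \ref{blow up} to obtain the Taylor series. The only difference is that the paper dispatches item (2) with the remark that the quotient computation ``also implies item (2),'' whereas you spell out the (correct) reason, namely that reparametrization by the invertible series $\hat{\psi}$ preserves orders and commutes with formal division, so $\hat{\tilde{\gamma}}=\tilde{\mathcal{C}}\circ\hat{\psi}$.
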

\begin{proof} Take holomorphic coordinates $(z_1,z_2)$  at $p\in V$. Then $\gamma=(\gamma_1,\gamma_2)$ has a Taylor series
$$\left( \sum_{j=\nu}^{\infty}a_{1}^j t^j,\sum_{j=\nu}^{\infty}a_{2}^j t^j\right)$$  with  $a_1^j,a_2^j\in\mathbb{C}$ and  $(a_1^{\nu},a_2^{\nu})\neq 0$. Of course we can assume that $a_1^{\nu}\neq 0$. Moreover, as in the proof of Proposition \ref{blow up}, by changing coordinates if necessary we can assume that  $a_2^{\nu}=0$. Thus,  for $t>0$ small we have $$\pi^{-1}\circ\gamma(t)=\left( \gamma_1(t),\frac{\gamma_2(t)}{\gamma_1 (t)}\right),$$ which clearly tends to $(0,0)$ as $t\rightarrow 0$.  Since $\gamma_1$ has a non-degenerate Taylor series, in order to prove item (\ref{item1}) it suffices to show that
$\frac{\gamma_2(t)}{\gamma_1(t)}$ has a Taylor series at $t=0$. In fact, proceeding as in Proposition \ref{blow up}, we  show that the Taylor series of $\frac{\gamma_2(t)}{\gamma_1(t)}$ is given by the formal quotient   of
$\sum\limits_{j=\nu}^{\infty}a_{2}^j t^j$ by $\sum\limits_{j=\nu}^{\infty}a_{1}^j t^j$ and this also implies item (2).
\end{proof}

\begin{defn}\label{characteristic} Let $\mathcal{F}$ be a  one-dimensional holomorphic foliation on a complex regular surface $V$, with a singularity
at $p\in V$. Consider  a $C^{\infty}$ curve $\gamma:[0,\epsilon)\rightarrow V$  with $\gamma(0)=0$.  We say that $\gamma$ is a \emph{characteristic curve} of $\mathcal{F}$ at $p$ if the following properties hold:
\begin{enumerate}
\item  $\gamma$ is injective and pseudo-analytic at $p\in V$;
\item  $\gamma((0,\epsilon))$ is contained in a leaf of $\mathcal{F}$.
\end{enumerate}
\end{defn}

A characteristic curve of a foliation is canonically asymptotic to a separatrix, as shown in the following result:

\begin{prop} \label{form}  Let $\mathcal{F}$ be a  one-dimensional holomorphic foliation on a complex regular surface $V$, with a singularity
at $p\in V$. If $\gamma\colon [0,\epsilon)\to V$ is a characteristic curve of $\mathcal{F}$ at $p$, then $\hat{\gamma}$ is a parametrized formal separatrix of $\mathcal{F}$.
\end{prop}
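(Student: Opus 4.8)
The plan is to translate the geometric hypothesis that $\gamma$ lies in a leaf of $\mathcal{F}$ into the purely formal identity $\hat{\gamma}^{*}\omega = 0$ for a reduced local equation $\omega$ of $\mathcal{F}$, and then to extract the separatrix by commutative algebra. First I would work in local holomorphic coordinates $(u,v)$ centered at $p$ and take a reduced local equation $\omega = P\,du + Q\,dv$, with $P,Q\in\co\{u,v\}$ and $\gcd(P,Q)=1$. Writing $\gamma(t)=(u(t),v(t))$, the fact that $\gamma((0,\epsilon))$ lies in a leaf means that the real velocity $\gamma'(t)$ is tangent to the leaf; since the leaf is a complex curve and $\omega$ is a holomorphic (hence $(1,0)$) form, this is equivalent to $\omega(\gamma'(t)) = P(\gamma(t))\,u'(t) + Q(\gamma(t))\,v'(t) = 0$ for every $t\in(0,\epsilon)$, where $u',v'$ denote the complex derivatives of the $\co$-valued functions $u,v$.

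The first key step is to pass to Taylor series, and this is where I expect the main difficulty to lie. The function $t\mapsto P(\gamma(t))\,u'(t) + Q(\gamma(t))\,v'(t)$ is $C^{\infty}$ on $[0,\epsilon)$ and vanishes on $(0,\epsilon)$, hence vanishes on all of $[0,\epsilon)$ by continuity, so its Taylor series is $0$. On the other hand, arguing exactly as in Proposition \ref{tainv}, where the invertibility of the map plays no role, applied to the $C^{\infty}$ maps $P$ and $Q$, the composites $P\circ\gamma$ and $Q\circ\gamma$ have Taylor series obtained by formal substitution, namely $P(\hat u,\hat v)$ and $Q(\hat u,\hat v)$; likewise $u',v'$ have Taylor series the formal derivatives $\hat u',\hat v'$. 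Since the notion of Taylor series respects sums and products, the Taylor series of the function above equals $P(\hat u,\hat v)\,\hat u' + Q(\hat u,\hat v)\,\hat v'$. Equating it to $0$ and renaming $t$ as the complex variable $z$, I obtain the formal identity $\hat{\gamma}^{*}\omega = 0$, that is, $P(\hat\gamma)\,\hat u' + Q(\hat\gamma)\,\hat v' = 0$ in $\co[[z]]$. Everything past this point is formal algebra; establishing $\hat{\gamma}^{*}\omega=0$ is exactly the place where the pseudo-analytic hypothesis and the holomorphy of $P,Q$ are used.

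Next I would identify the underlying formal curve. Consider the local $\co$-algebra homomorphism $\hat{\gamma}^{*}\colon \co[[u,v]]\to\co[[z]]$, $g\mapsto g(\hat u,\hat v)$. Since $\co[[z]]$ is a domain, $\mathfrak{p}:=\ker\hat{\gamma}^{*}$ is prime. Non-degeneracy of $\hat\gamma$ forces $\mathfrak{p}\neq\mathfrak{m}$ (otherwise $\hat u=\hat v=0$), while a dimension count, $\co[[z]]$ having Krull dimension $1$, forces $\mathfrak{p}\neq 0$ (otherwise $\co[[u,v]]$ would embed in $\co[[z]]$). Hence $\mathfrak{p}$ has height $1$, and since $\co[[u,v]]$ is a unique factorization domain, $\mathfrak{p}=(f)$ with $f$ irreducible. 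In particular $f(\hat\gamma)=0$, so $\{f=0\}$ is an irreducible formal curve parametrized by $\hat\gamma$.

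Finally I would check that $\{f=0\}$ is $\mathcal{F}$-invariant. Differentiating $f(\hat\gamma)=0$ in $z$ yields $f_u(\hat\gamma)\,\hat u' + f_v(\hat\gamma)\,\hat v' = 0$, which together with $P(\hat\gamma)\,\hat u' + Q(\hat\gamma)\,\hat v' = 0$ places the vector $(\hat u',\hat v')$, nonzero because $\hat\gamma$ is non-degenerate, in the kernel of the matrix with rows $(P(\hat\gamma),Q(\hat\gamma))$ and $(f_u(\hat\gamma),f_v(\hat\gamma))$. Over the field $\co((z))$ this forces the determinant to vanish, giving $(Pf_v - Qf_u)(\hat\gamma)=0$, i.e. $Pf_v - Qf_u\in\ker\hat{\gamma}^{*}=(f)$. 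Writing $Pf_v - Qf_u = f\,h$, I conclude $\omega\wedge df = (Pf_v - Qf_u)\,du\wedge dv = f\,h\,du\wedge dv$, so $f$ defines a separatrix of $\mathcal{F}$. Thus $\hat\gamma$ is a non-degenerate formal parametrization of the separatrix $\{f=0\}$, which is the desired conclusion.
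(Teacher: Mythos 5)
Your first two paragraphs are, in substance, the paper's entire proof: the authors take a defining holomorphic $1$-form $\omega$, observe that $\omega(\gamma(t))\cdot\gamma'(t)=0$ on $[0,\epsilon)$, and conclude that the Taylor series of this function vanishes, i.e. $\hat{\omega}(\hat{\gamma})\cdot\hat{\gamma}'=0$ --- and they stop there, leaving implicit the passage from this formal identity to an irreducible $f$ with $f(\hat{\gamma})=0$ and $\omega\wedge df=fh\,du\wedge dv$. Your third and fourth paragraphs supply exactly that omitted step, and they are correct except at one point.

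The weak point is the claim that $\mathfrak{p}=\ker\hat{\gamma}^{*}$ is nonzero ``by a dimension count\ldots otherwise $\co[[u,v]]$ would embed in $\co[[z]]$.'' Krull dimension is not monotone under ring embeddings (every domain embeds in its fraction field, which has dimension $0$), so the existence of an injection $\co[[u,v]]\hookrightarrow\co[[z]]$ is not absurd on dimension grounds alone; note also that the differential equation plays no role in your argument for $\mathfrak{p}\neq 0$, so whatever you invoke here must be a general fact about non-degenerate formal arcs. The fact is true, but needs an argument: since $\hat{\gamma}$ is non-degenerate, after a linear change of coordinates $\hat{u}\neq 0$, say of order $\nu$, and after a formal reparametrization (which does not change $\mathfrak{p}$) one may take $\hat{u}=z^{\nu}$; then $\co[[z]]$ is a free module of rank $\nu$ over $\co[[z^{\nu}]]=\hat{\gamma}^{*}(\co[[u]])$, so $\hat{v}$ is integral over $\co[[u]]$, and a monic relation $\hat{v}^{\nu}+a_{\nu-1}(\hat{u})\hat{v}^{\nu-1}+\cdots+a_{0}(\hat{u})=0$ exhibits a nonzero element of $\mathfrak{p}$. (Equivalently, $\mathfrak{m}\,\co[[z]]$ is primary for the maximal ideal of $\co[[z]]$, so by the complete version of Nakayama's lemma $\co[[z]]$ is a finite module over the image, and only then does the dimension count $\dim\co[[u,v]]/\mathfrak{p}=\dim\co[[z]]=1$ become legitimate.) With that repaired, the remainder --- $\mathfrak{p}=(f)$ with $f$ irreducible since $\co[[u,v]]$ is a UFD, and the determinant argument yielding $Pf_{v}-Qf_{u}\in(f)$, hence $\omega\wedge df=fh\,du\wedge dv$ --- is correct and is a worthwhile completion of what the paper leaves to the reader.
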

\begin{proof} Let $\omega$ be a holomorphic one form defining $\mathcal{F}$ near $p\in V$. Since $\gamma(t)$ is contained in a leaf of $\mathcal{F}$ for $t\in(0,\epsilon)$, we have that $$\omega (\gamma(t))\cdot \gamma'(t)=0 \textrm{ for all }t\in [0,\epsilon).$$ Then the Taylor series of $\omega (\gamma(t))\cdot \gamma'(t)$ at $t=0$ is null, that is,
$$\hat{\omega} (\hat{\gamma})\cdot \hat{\gamma}'=0.$$
\end{proof}

We finish this section  providing, in three examples,  an analysis of characteristic curves according to the type of separatrices to which they are asymptotic.

\begin{exa}\emph{Characteristic curves asymptotic to a dicritical separatrix.}\label{exam1}
 Let $\gamma$ be a characteristic curve asymptotic to a dicritical separatrix  $S$ of a foliation  $\mathcal{F}$. After the desingularization of $\mathcal{F}$, the strict transforms of $S$ and  $\gamma$ tend to a trace point in a dicritical component. Thus, the only possibility is that $\gamma$ is a curve contained in $S$.
\end{exa}

\begin{exa} \emph{Characteristic curves asymptotic to a strong separatrix.}\label{exam2}
 Let $\gamma$ be a characteristic curve asymptotic to a  strong separatrix  $S$ of a foliation  $\mathcal{F}$.  Let $\pi:(M,E)\to(\mathbb{C}^2,0)$ be the desingularization of $\mathcal{F}$ and denote by $\tilde{\mathcal{F}} = \pi^{*} \F$ the strict transform of $\mathcal{F}$. Then the strict transform $\tilde{S} = \pi^{*}S$   is a separatrix of some reduced singularity $p\in E$ of $\tilde{\mathcal{F}}$.  Since the separatrix $S$ is  strong, by doing one more  blow-up at $p$ if it were a saddle node,
we can assume that the singularity at $p$ is non degenerate.  Moreover, by performing  some additional blow-ups if necessary, we can assume that the ratio of eigenvalues of the singularity at $p$ has a negative real part.  Thus, we
can take holomorphic local coordinates $(u,v)$ at $p$ such that:
\begin{enumerate}
\item the foliation $\tilde{\mathcal{F}}$ at $p$ is generated by a 1-form
$$\omega=udv-vQ(u,v)du,$$ where ${Q(u,v)=\lambda+\ldots}$, $\text{Re}(\lambda)<0$.
\item $E$ is given by $\{u=0\}$;
\item $\tilde{S}$ is given by $\{v=0\}$.
\end{enumerate}
Let $\tilde{\gamma}$ be the strict transform of $\gamma$ by $\pi$. By Proposition \ref{charac-curve-blowup}, $\tilde{\gamma}$ is a characteristic curve of $\tilde{\mathcal{F}}$ asymptotic to $\tilde{S}$. We will prove that $\tilde{\gamma}$ is contained in $\tilde{S}$.  If we express $\tilde{\gamma}(t)=(u(t),v(t))$, $t\in[0,\epsilon)$,
since $\tilde{\gamma}$  is tangent to the foliation
$\tilde{\mathcal{F}}$, we have that $$u(t)v'(t)-v(t)Q\big(u(t),v(t)\big)u'(t)=0.$$ Then, if
we define $r(t)=|v(t)|^2$, a straightforward computation give us that $$r'= 2|v|^2\text{Re}\big(\frac{u'}{u}Q\big).$$
Since $\tilde{\gamma}$ has a nonzero Taylor series and $\tilde{\gamma}$ is asymptotic to $\{v=0\}$, we see   that $u(t)$
has a nonzero Taylor series $\hat{u}(t)=\sum\limits_{j\ge n}a_jt^j$, $a_j\in\mathbb{C}$, $a_n\neq 0$, $n\in\mathbb{N}$.
 From this we easily obtain that
$$\frac{u'(t)}{u(t)}=\frac{1}{t}\big(n+o(t)\big)$$ and therefore
$$r'= 2|v|^2 \frac{1}{t}\text{Re}\Big(\big(n+o(t)\big)Q\Big)$$ for all $t\in(0,\epsilon)$.
Suppose that $\tilde{\gamma}$ is not contained in $\tilde{S}$. Then  we have $|v(t)|>0$ for all $t\in(0,\epsilon)$. Thus,
since $$\text{Re}\Big(\big(n+o(t)\big)Q\Big)\rightarrow n\text{Re}(\lambda)<0,$$
for $t>0$ small enough we have that $r'(t)<0$. But this is a contradiction,  since that $r(0)=0$ and  $r(t)>0$ for  $t\in(0,\epsilon)$.
Therefore, we conclude that  a strong separatrix contains all its asymptotic characteristic curves.
\end{exa}

\begin{exa}\emph{Characteristic curves asymptotic to a weak separatrix.}
\label{normal curves}Consider a saddle-node foliation $\mathcal{F}$ at $0\in\mathbb{C}^2$  whose strong separatrix is contained  in  $\{(u,v)\colon u=0\}$. Then there is a  formal series $\hat{s}(u)=\sum_{j=1}^{\infty}c_ju^j$ such that the weak separatrix $S$ of $\mathcal{F}$ is given by $v=\hat{s}(u)$. It is known that there exists a constant $\vartheta >0$ depending only on the analytic type of the saddle-node such that, given $\eta\in\mathbb{C}^*$,   we can find a holomorphic function $f$ defined on a sector of the form $$V=\{re^{i\theta}\eta\in\mathbb{C}: 0<r<\epsilon,\, -\vartheta<\theta<\vartheta\}\,\,(\epsilon>0)$$  such that:
\begin{enumerate}
\item the graph $\{(u,f(u)):u\in V\}$ is contained in a leaf of the foliation $\mathcal{F}$;
\item \label{asint} the function $f$ has the series  $\sum\limits_{j=1}^{\infty}c_ju^j$ as asymptotic expansion at $0\in\mathbb{C}$.
\end{enumerate} Define $$\gamma(t)= ( \eta t,f(\eta t))$$ for $t\ge 0$ small and observe that the Taylor series of $\gamma$ at $t=0$ is given by $(\eta t,\hat{s}(\eta t))$. Therefore  $\gamma$ is a characteristic curve of $\mathcal{F}$ asymptotic to the separatrix $S$. We remark that the function $f$ above can be non unique, for example if $\eta$ corresponds to a ``node'' direction of the saddle-node. Thus, even if the weak separatrix is convergent, it does not contain all its  asymptotic characteristic curves. In general, the argument above can be done for a saddle-node appearing in the desingularization of a non reduced foliation  $\mathcal{F}$. Thus, we conclude that a weak separatrix $S$ of any foliation does not contain all  its  asymptotic characteristic curves, even if $S$ is convergent.
\end{exa}

\section{Correspondence of separatrices by a $C^{\infty}$ equivalence}
\label{correspondence}

The   main result of this section is the following:
\begin{thm}
\label{inv}
Let $\Phi$ be a $C^{\infty}$ equivalence between two germs $\mathcal{F}$ and $\mathcal{F}'$ of singular holomorphic foliations at $(\mathbb{C}^2,0)$. Then, given a formal separatrix $S$ of $\mathcal{F}$, there exists a unique formal separatrix $S'$ of $\mathcal{F}'$, denoted by $S'=\Phi_*(S)$, such that for any characteristic curve  $\gamma$ of $\mathcal{F}$ asymptotic to  $S$ we have that $\gamma'=\Phi(\gamma)$ is  a characteristic curve of $\mathcal{F}'$ asymptotic to  $S'$.
\end{thm}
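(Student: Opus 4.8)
The plan is to define the correspondence $S\mapsto S'=\Phi_*(S)$ by transporting a characteristic curve: choose any characteristic curve $\gamma$ of $\mathcal{F}$ asymptotic to $S$, set $\gamma'=\Phi\circ\gamma$, and let $S'$ be the separatrix of $\mathcal{F}'$ to which $\gamma'$ is asymptotic. Three points must then be settled: that such a $\gamma$ exists; that $\gamma'$ really is a characteristic curve asymptotic to a separatrix of $\mathcal{F}'$; and --- the delicate point --- that the resulting $S'$ does not depend on the chosen $\gamma$. Uniqueness of $S'$ is immediate from the construction once well-definedness holds, and the bijectivity of $\Phi_*$ will follow by running the same construction for $\Phi^{-1}$.

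Existence of $\gamma$ and the fact that $\gamma'$ is a characteristic curve form the routine part. For existence I would invoke the trichotomy of Examples \ref{exam1}, \ref{exam2} and \ref{normal curves}: if $S$ is dicritical or strong it contains characteristic curves (any injective pseudo-analytic parametrization of an arc of $S$ lying in a leaf), while if $S$ is weak the sectorial solutions of the associated saddle-node produced in Example \ref{normal curves} furnish a characteristic curve asymptotic to $S$, even when $S$ is purely formal. For the image, $\gamma'=\Phi\circ\gamma$ is injective since $\Phi$ is a diffeomorphism and $\gamma$ is injective; it is pseudo-analytic with Taylor series $\hat{\Phi}\circ\hat{\gamma}$ by Proposition \ref{tainv} (in its manifold form); and since $\Phi$ sends leaves of $\mathcal{F}$ to leaves of $\mathcal{F}'$, the image $\gamma'((0,\epsilon))$ lies in a leaf of $\mathcal{F}'$. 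Hence $\gamma'$ is a characteristic curve of $\mathcal{F}'$, and by Proposition \ref{form} its complex Taylor series is a parametrized formal separatrix, which is our $S'$.

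The heart of the matter is well-definedness, and it cannot be settled by formal algebra alone. If $\gamma_0,\gamma_1$ are two characteristic curves asymptotic to $S$, their complex Taylor series are related by a reparametrization $\hat{\gamma}_1=\hat{\gamma}_0\circ\hat{\phi}$ with $\hat{\phi}'(0)\neq 0$; but because $\Phi$ is only real-differentiable, $\hat{\Phi}$ depends on $\hat{\gamma}$ and its conjugate, so $\hat{\Phi}\circ(\hat{\gamma}_0\circ\hat{\phi})$ does not reduce to $(\hat{\Phi}\circ\hat{\gamma}_0)\circ\hat{\phi}$ unless $\hat{\phi}$ has real coefficients --- and non-real $\hat{\phi}$ genuinely occur, as the two curves may approach $S$ along the same tangent line but with different complex parametrizations. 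The plan is therefore to pass to the minimal desingularization $\pi:(M,E)\to(\mathbb{C}^2,0)$ of $\mathcal{F}$ and, using the blow-up invariance of Proposition \ref{charac-curve-blowup}, to replace $\gamma_0,\gamma_1$ by their lifts $\tilde{\gamma}_0,\tilde{\gamma}_1$, which are characteristic curves of $\pi^{*}\mathcal{F}$ asymptotic to $\pi^{*}S$ and converging to the single trace point $p=\tau_E(S)$. At $p$ the local analysis of Examples \ref{exam2} and \ref{normal curves} applies: for a strong or dicritical $S$ the lifts lie in the analytic branch $\pi^{*}S$ itself, and for a weak $S$ they lie in the sectorial leaves of the saddle-node at $p$. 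In every case $\gamma_0$ and $\gamma_1$ are confined to a single connected foliated region accumulating on $S$, and I would join them by a continuous family $\gamma_s$, $s\in[0,1]$, of characteristic curves asymptotic to $S$.

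Applying $\Phi$ to this family yields a continuous family $\gamma_s'=\Phi\circ\gamma_s$ of characteristic curves of $\mathcal{F}'$, asymptotic to separatrices $S_s'$, and it remains to prove $s\mapsto S_s'$ constant. I would argue this is locally constant after desingularizing $\mathcal{F}'$: the lifts of $\gamma_s'$ converge to trace points $q_s\in E'$ depending continuously on $s$, while the type (dicritical, strong or weak) of $S_s'$ is preserved along the family; for isolated separatrices the admissible $q_s$ lie in the finite set of singularities of $E'$, so continuity forces $q_s$, hence $S_s'$, to be constant, and in the dicritical case the inclusion $\gamma_s\subset S$ confines the images to the fixed invariant set $\Phi(S)$, again isolating a single $S'$. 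The main obstacle is precisely this step --- constructing the connecting family and showing its images stay asymptotic to one separatrix --- because it is exactly where the non-holomorphy of $\Phi$ must be defeated by geometry rather than by formal composition. Once $S'=\Phi_*(S)$ is well defined, applying the whole construction to $\Phi^{-1}$ produces a map $\Phi^{-1}_*$ with $\Phi^{-1}_*\circ\Phi_*=\mathrm{id}$, since $\Phi^{-1}(\gamma')=\gamma$ is asymptotic to $S$; this delivers both the asserted uniqueness and the bijectivity of the correspondence.
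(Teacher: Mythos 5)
Your architecture coincides with the paper's: define $S'$ via the Taylor series of $\Phi(\gamma)$, observe that the only issue is independence of the choice of $\gamma$, and settle that by a connectedness-plus-continuity argument into the finite set of isolated separatrices of $\mathcal{F}'$. You also correctly diagnose why formal algebra fails (a non-real reparametrization does not commute with the merely real series $\hat{\Phi}$). But the two steps that carry the entire weight of the proof are announced rather than executed, and neither is routine.

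First, the connecting family. For a weak separatrix the curves $\gamma_0,\gamma_1$ need not lie on the same leaf nor in ``a single connected foliated region'': in a node direction of the saddle-node a continuum of distinct leaves is asymptotic to the weak separatrix, and the two curves may approach the trace point along different directions $\eta_0\neq\eta_1$. The paper's construction is specific: it first replaces each $\gamma_i$ by a curve with the \emph{same Taylor series} lying on the graph $\{v=f(u)\}$ of a single sectorial solution $f$ (legitimate because the image separatrix depends only on the Taylor series, since $\hat{\gamma}'=\hat{\Phi}\circ\hat{\pi}\circ(\hat{u},\hat{s}\circ\hat{u})$ is a formal composition), then interpolates only the $u$-coordinate, $\hat{u}_s=u_1t+\sum_{j\ge 2}(1-s)u_jt^j$, staying on that graph; curves with different leading directions are joined through the path-connected set $\mathbb{C}^*\times\{0\}^{\mathbb{N}}$ realized by the sectorial solutions of Example \ref{normal curves}. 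Second, the continuity of $s\mapsto S'_s$. Your assertion that ``the trace points $q_s$ depend continuously on $s$'' is precisely what has to be proved, and it does not follow from geometric continuity of the curves: one needs that finitely many Taylor coefficients of $\hat{\Phi}\circ\hat{\gamma}_s$ determine $S'_s$ and vary continuously in the product topology on coefficient space. The paper supplies this with Lemma \ref{puiseux} (the Puiseux data of $S'_{\hat{u}}$ are polynomials in the coefficients of $\hat{u}$ and in $1/u_1$) together with Proposition \ref{rosas}, which forces the leading coefficient of $\hat{\Phi}\circ\hat{\gamma}$ to be of the form $au_1^{\nu}$ or $a\bar{u}_1^{\nu}$ and hence nonvanishing; without that control the order could jump and continuity would fail. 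You also assert without argument that the type of $S'_s$ is preserved along the family, whereas the paper proves $S'_{\hat{u}}$ is isolated by noting that otherwise $\gamma$ would be contained in a dicritical, hence convergent, separatrix. Until the family is actually constructed and the continuity established in the coefficient topology, the well-definedness of $\Phi_*$ remains open.
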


In order to prove the theorem, we   need two preliminary  results.

\begin{lem}
\label{puiseux}
 Fix $\nu\in\mathbb{N}$.  Then, for each $j\in\mathbb{N}$ there exists a complex polynomial $P_j$ in $2j+1$ variables such that,
 if
 $$\hat{S}=(\sum\limits_{j=\nu}^{\infty}a_jz^j,\sum\limits_{j=\nu}^{\infty}b_jz^j)$$
 is a parametrized formal complex curve with $a_{\nu}=c^{\nu}$, $c\neq 0$, and we set $$\sigma_j=P_j(c,\frac{1}{c},a_{\nu+1},\ldots,a_{\nu+j-1},b_{\nu},\ldots,b_{\nu+j-1}),$$
 then $S$ can be reparametrized as
 $$(x^{\nu},\sum\limits_{j=\nu}^{\infty}\sigma_j x^j),$$
  that is, there is an invertible series $\hat{\psi}(x)$ such that $\hat{S}(\psi (x))=(x^{\nu},\sum\limits_{j=\nu}^{\infty}\sigma_j x^j)$.
\end{lem}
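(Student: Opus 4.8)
The plan is to normalize the first coordinate by a classical Puiseux substitution and then simply read off the second coordinate. Write the two coordinate series as $f(z)=\sum_{j\geq\nu}a_jz^j$ and $g(z)=\sum_{j\geq\nu}b_jz^j$. It suffices to produce an invertible formal series $\hat{\psi}(x)=\sum_{n\geq 1}p_nx^n$ such that $f(\hat{\psi}(x))=x^{\nu}$; then $\sigma_j$ is by definition the coefficient of $x^j$ in $g(\hat{\psi}(x))$, and the entire content of the lemma is the assertion that this coefficient is a \emph{polynomial} in the prescribed $2j+1$ variables. The existence of $\hat{\psi}$ is routine; the work lies in the dependence of its coefficients.

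First I would extract a $\nu$-th root of $f$. Since $a_{\nu}=c^{\nu}\neq 0$, I factor $f(z)=(cz)^{\nu}\big(1+h(z)\big)$, where $h(z)=\sum_{i\geq 1}(a_{\nu+i}/c^{\nu})z^i$ vanishes at $z=0$. The binomial series $(1+h)^{1/\nu}=\sum_{k\geq 0}\binom{1/\nu}{k}h^k$ is a well-defined formal power series whose coefficients are universal polynomials in those of $h$; since $a_{\nu+i}/c^{\nu}=a_{\nu+i}(1/c)^{\nu}$, these are polynomials in $1/c$ and $a_{\nu+1},a_{\nu+2},\ldots$. Setting $\phi(z)=cz(1+h(z))^{1/\nu}$ I obtain $\phi(z)^{\nu}=f(z)$ and $\phi(z)=cz+\cdots$, so $\phi$ is invertible. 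Taking $\hat{\psi}=\phi^{-1}$ gives $f(\hat{\psi}(x))=\phi(\hat{\psi}(x))^{\nu}=x^{\nu}$, as required, and pins down the reparametrization up to the irrelevant ambiguity $x\mapsto\zeta x$, $\zeta^{\nu}=1$.

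Next I would track the coefficients. The coefficient of $z^k$ in $\phi$ is $c$ times a polynomial in $1/c$ and $a_{\nu+1},\ldots,a_{\nu+k-1}$. Solving $\phi(\hat{\psi}(x))=x$ order by order yields $p_1=1/c$ and, for $n\geq 2$, a relation $c\,p_n+(\text{polynomial in }p_1,\ldots,p_{n-1}\text{ and the coefficients of }\phi)=0$; hence $p_n$ is $1/c$ times such a polynomial, and by induction $p_n$ is a polynomial in $c,1/c,a_{\nu+1},\ldots,a_{\nu+n-1}$. Finally $\sigma_j$ is the coefficient of $x^j$ in $g(\hat{\psi}(x))=\sum_{m\geq\nu}b_m\hat{\psi}(x)^m$; since $\hat{\psi}$ has order one, only the indices $\nu\leq m\leq j$ and the coefficients $p_1,\ldots,p_{j-\nu+1}$ contribute, so $\sigma_j$ is a polynomial in $c,1/c$, in $a_{\nu+1},\ldots,a_{\nu+j-1}$ and in $b_{\nu},\ldots,b_{\nu+j-1}$. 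Declaring $P_j$ to be this universal polynomial — some of whose $2j+1$ variables need not actually occur — finishes the argument.

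The order-by-order computation is mechanical; the step that demands care is the \emph{polynomiality} in exactly these variables. Concretely, I must guarantee that no denominators other than powers of $c$ ever appear, which rests on two observations: dividing by $a_{\nu}=c^{\nu}$ introduces only powers of $1/c$, and inverting a series whose linear coefficient is $c$ likewise introduces only powers of $1/c$. Recording $1/c$ as an independent variable, rather than permitting negative powers of $c$, is precisely what converts the resulting Laurent expressions into honest polynomials, and a short degree count then confirms the index ranges and the total count of $2j+1$ variables.
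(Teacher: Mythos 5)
Your argument is correct and is exactly the argument the paper has in mind: its proof consists of the single remark that the lemma ``is only a Puiseux's Parametrization putting in evidence the dependence of the final coefficients in terms of the initial ones,'' and your extraction of the $\nu$-th root via the binomial series, formal inversion, and bookkeeping of which $a_i$, $b_i$ and powers of $c$, $1/c$ enter each $\sigma_j$ supplies precisely the details the authors omit. No discrepancy to report.
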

\begin{proof}This lemma is only a Puiseux's Parametrization putting in evidence the dependence of the final coefficients in terms of the initial ones.
\end{proof}
\begin{prop}\label{rosas} Let $\Phi$ be a $C^{\infty}$  equivalence between two  holomorphic foliations with isolated singularity at $0\in\mathbb{C}^2$ and let $J\colon\mathbb{C}^2\to\mathbb{C}^2$ be the complex conjugation. Then either $d\Phi(0)$ or $d\Phi(0)\circ J$ is a  $\mathbb{C}$-linear isomorphism of $\mathbb{C}^2 $.
\end{prop}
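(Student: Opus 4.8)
The plan is to reduce the statement to a question in real-linear algebra about $L:=d\Phi(0)$ and then feed in geometric information coming from the fact that $\Phi$ carries leaves to leaves. Write $L$ in its unique form $L(v)=Av+B\bar v$ with $A,B\in M_{2}(\co)$, where $Av$ is the $\co$-linear part and $B\bar v$ the $\co$-antilinear part. With $J(v)=\bar v$ one has that $d\Phi(0)$ is $\co$-linear iff $B=0$, and $d\Phi(0)\circ J$ is $\co$-linear iff $A=0$. For $0\neq u\in\co^{2}$ one computes $L(\co u)=\{zAu+\bar z\,B\bar u:z\in\co\}$, and a direct check (writing $z=s+it$) shows that this real $2$-plane is a complex line if and only if $\det_{\co}[Au,\,B\bar u]=0$. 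Hence, setting $q(u):=\det_{\co}[Au,B\bar u]$, a $\co$-valued form of bidegree $(1,1)$ in $(u,\bar u)$, the set $Z:=\{[u]\in\mathbb{P}^{1}:L(\co u)\text{ is a complex line}\}$ is exactly the projective zero locus of $q$.

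The core linear-algebra lemma I would establish is: \emph{if $Z$ contains a nonempty open subset of $\mathbb{P}^{1}$, then $L$ is $\co$-linear or $\co$-antilinear.} Indeed, in an affine chart $q$ is a polynomial in $(\mathrm{Re}\,s,\mathrm{Im}\,s)$, so if its zero set has nonempty interior then $q\equiv0$, i.e. $Au$ and $B\bar u$ are $\co$-collinear for every $u$. If $A$ is invertible this says $A^{-1}B\bar u\in\co u$ for all $u$; testing on $u=e_{1},e_{2},(1,1),(1,i)$ forces $A^{-1}B=0$, hence $B=0$. Symmetrically, $B$ invertible gives $A=0$. If both $A,B$ have rank $\le1$ and are nonzero, then $q\equiv0$ forces $\mathrm{im}\,A=\mathrm{im}\,B$ to be a single complex line $\ell$, whence $L(\co^{2})\subseteq\ell$, contradicting that $L$ is an isomorphism. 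This proves the lemma.

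It remains to produce an open set of directions inside $Z$. Here I use that $\Phi$ sends leaves of $\F$ to leaves of $\F'$: for every regular point $p$ the differential $d\Phi_{p}$ maps the complex tangent line $T_{p}\F$ to the complex tangent line $T_{\Phi(p)}\F'$. Let $\ell$ be any direction in the cluster set of the tangent field $p\mapsto T_{p}\F$ at $0$, say $\ell=\lim_{n}T_{p_{n}}\F$ with regular $p_{n}\to0$. Then $\Phi(p_{n})\to0$ are regular points of $\F'$ and $d\Phi_{p_{n}}(T_{p_{n}}\F)=T_{\Phi(p_{n})}\F'$ is a complex line for each $n$. Since $\Phi$ is $C^{1}$ we have $d\Phi_{p_{n}}\to L$, so by continuity of the action of $GL$ on the Grassmannian of real $2$-planes, $d\Phi_{p_{n}}(T_{p_{n}}\F)\to L(\ell)$; as the locus of complex lines is closed in that Grassmannian, $L(\ell)$ is again a complex line. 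Thus $Z$ contains the entire cluster set of $p\mapsto T_{p}\F$ at $0$.

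Finally I would show that this cluster set is all of $\mathbb{P}^{1}$. Writing a reduced equation $\omega=P\,du+Q\,dv$ with $P,Q$ coprime, the tangent direction at a regular point is $T_{p}\F=[-Q(p):P(p)]$, the value at $p$ of the meromorphic map $g=[-Q:P]\colon(\co^{2},0)\dashrightarrow\mathbb{P}^{1}$. Because the singularity at $0$ is isolated, $P(0)=Q(0)=0$ with $P,Q$ coprime, so $0$ is a point of indeterminacy of $g$. Resolving it by a finite sequence of blow-ups $\sigma\colon X\to\co^{2}$ gives a holomorphic $\tilde g\colon X\to\mathbb{P}^{1}$. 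The restriction of $\tilde g$ to the exceptional fiber $\sigma^{-1}(0)$ cannot be constant: a finite constant value would make $-Q/P$ bounded near $0$, hence holomorphically extendable, contradicting coprimality, and the value $\infty$ is excluded likewise using $P/Q$. So $\tilde g$ is non-constant on some component $\mathbb{P}^{1}\subset\sigma^{-1}(0)$, and a non-constant holomorphic map $\mathbb{P}^{1}\to\mathbb{P}^{1}$ is surjective; since every point of $\tilde g(\sigma^{-1}(0))$ is a limit of values of $g$ along points tending to $0$, the cluster set of $p\mapsto T_{p}\F$ equals $\mathbb{P}^{1}$. Combined with the previous two steps, $Z=\mathbb{P}^{1}$ and the lemma yields the proposition.

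I expect the main obstacle to be exactly this last step: controlling the cluster set of the tangent field at a possibly very degenerate singularity, where the leading jet of $\omega$ may define a single direction and a naive leading-term computation is inconclusive. This is why I route the argument through the resolution of indeterminacy of $g$, the coprimality of $P,Q$ being the essential fact that prevents $\tilde g|_{\sigma^{-1}(0)}$ from collapsing to a point.
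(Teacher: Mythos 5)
Your proof is correct. Note that the paper does not actually prove this proposition internally: its ``proof'' is a one-line citation to Lemma 4.3 of \cite{rosas2010}, so there is no in-paper argument to compare yours against; what you have written is a complete, self-contained substitute. All three stages check out. The algebraic reduction is sound: writing $L=d\Phi(0)$ as $Lv=Av+B\bar v$, the real $2$-plane $L(\mathbb{C}u)=\{zAu+\bar z B\bar u\}$ is a complex line exactly when $Au$ and $B\bar u$ are $\mathbb{C}$-collinear (evaluate at $z=1$ and $z=i$ for the converse), and your case analysis on the ranks of $A$ and $B$ --- using that $L$ is a real isomorphism to exclude the degenerate case $\mathrm{im}\,A=\mathrm{im}\,B$ a single complex line --- correctly shows that $q\equiv 0$ forces $A=0$ or $B=0$. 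The geometric input is also right: $d\Phi_p$ carries $T_p\mathcal{F}$ onto $T_{\Phi(p)}\mathcal{F}'$ at every regular point, and the closedness of the locus of complex lines in the real Grassmannian, together with $d\Phi_{p_n}\to L$, passes this to limits, so $L$ maps every cluster direction of the tangent field to a complex line. Finally, routing the cluster-set computation through the resolution of indeterminacy of $[-Q:P]$ is the right move: constancy of the resolved map on the (connected) exceptional fibre would make $-Q/P$ or $P/Q$ bounded near $0$, hence holomorphic, contradicting coprimality of $P$ and $Q$ (both vanish at the isolated singularity), and a nonconstant holomorphic map $\mathbb{P}^1\to\mathbb{P}^1$ is surjective. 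This last step actually gives more than you need --- any nonempty open set of directions in $Z$ would suffice --- but it is clean and, as you note, avoids the leading-jet degeneracies that defeat a naive computation.
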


\begin{proof} See \cite[Lemma 4.3]{rosas2010}.

\end{proof}

\noindent\emph{Proof of Theorem \ref{inv}.}
 If $\gamma$ is a characteristic curve of $\mathcal{F}$ asymptotic to $S$, it is obvious that $\Phi (\gamma)$ is a characteristic curve of $\mathcal{F}'$. Moreover, by Proposition \ref{form} the characteristic curve $\Phi(\gamma)$ is asymptotic to the formal separatrix given by its Taylor series. The nontrivial fact is that this formal separatrix depends only on $S$ and not on the characteristic curve $\gamma$.

If $S$ is convergent we take $S'=\Phi(S)$. The proof of the theorem  will then be easy in the following cases:
\begin{enumerate}
\item $S$ is a dicritical separatrix;
\item $S$ is a strong separatrix.
\end{enumerate}
In both cases, if $\gamma$ is a characteristic curve asymptotic to $S$, by examples \ref{exam1} and \ref{exam2} we have that $\gamma\subset S$. Since  $\Phi(\gamma)$  is contained in $S'$, then $\Phi(\gamma)$ is   a characteristic curve asymptotic to  $S'$.

We begin   the proof  of the remaining case.
Let $\pi: (M,E) \to (\mathbb{C}^2,0)$ be the reduction of singularities of $\mathcal{F}$. Then, the strict transform $\tilde{S} = \pi^{*}S$   is the weak separatrix of a saddle-node singularity at some trace point $p\in E$. Clearly $p$ is not a corner and its  strong separatrix  is contained in $E$. Let $(u,v)$ be local holomorphic coordinates at $p\in E$ such that:
\begin{enumerate}
\item $p\simeq (0,0)$;
\item $E$ is given by $\{u=0\}$.
\end{enumerate} Then there exists a formal series $\hat{s}(u)=\sum_{j=1}^{\infty}c_ju^j$ such that the  $\tilde{S}$ is given by $v=\hat{s}(u)$.
Let $\gamma$ be a characteristic curve asymptotic to $S$ and let $\tilde{\gamma}(t)=\left(u(t),v(t)\right)$ be the strict transform of $\gamma$ by $\pi$.
Then   $\tilde{\gamma}$ has a non-degenerate Taylor series given by
$$(\hat{u},\hat{v})=\left(\sum\limits_{j=1}^{\infty}u_jt^j,\sum\limits_{j=1}^{\infty}v_jt^j\right),\textrm{ where } u_j,v_j\in\mathbb{C}.$$
Since $\tilde{\gamma}$ is asymptotic to $\tilde{S}$ at $p$, we deduce  that $u_1\neq 0$ and
$\hat{v}=\hat{s}\circ\hat{u}.$
Let $\hat{\pi}=\hat{\pi}_{\nu}+\hat{\pi}_{\nu+1}+\ldots$ be the Taylor series of $\pi$ at $p$, where the $\hat{\pi}_{\nu} $ is the initial part of $\pi$.   It is easy to see that the initial part of  $\hat{\tilde{\gamma}}=(\hat{u},\hat{s}\circ\hat{u})$ is given by $\hat{\tilde{\gamma}}_1=(u_1t,c_1u_1t)$. Then the initial part of $\hat{\gamma}=\hat{\pi}\circ\hat{\tilde{\gamma}}$ is $\hat{\pi}_{\nu}(u_1,c_1u_1)t^{\nu}$. Since $\Phi$ is a diffeomorphism, its initial part $\hat{\Phi}_1$ is an isomorphism, so $$\hat{\Phi}_1\circ\hat{\pi}_{\nu}(u_1,c_1u_1)\neq 0.$$
Then  the initial part of $\hat{\gamma'}=\hat{\Phi}\circ\hat{\gamma}$ is $$\hat{\gamma'}_{\nu}=\hat{\Phi}_1\circ\hat{\pi}_{\nu}(u_1,c_1u_1)t^{\nu}.$$ Thus,  we can write
\begin{equation}\hat{\gamma'}=(\sum\limits_{j=\nu}^{\infty}a_jt^j,\sum\limits_{j=\nu}^{\infty}b_jt^j),
\end{equation}
where the coefficients $a_j$ and $b_j$ are  polynomials in the coefficients of $\textrm{Re}(\hat{u})$ and $\textrm{Im}(\hat{u})$ and $(a_{\nu},b_{\nu})\neq 0$.
If $J:\mathbb{C}^2\to\mathbb{C}^2$ is the complex conjugation, by Proposition \ref{rosas} we have that either $\Phi_1$ or $\Phi_1\circ J$ is a $\mathbb{C}$-linear isomorphism.
Then there is $(a,b)\in\mathbb{C}^2\backslash\{0\}$ such that
\begin{enumerate}
\item $(a_{\nu},b_{\nu})=(au_1^{\nu},bu_1^{\nu})$, or
\item $(a_{\nu},b_{\nu})=(a\bar{u_1}^{\nu},b\bar{u_1}^{\nu})$.
\end{enumerate}
Both cases  are similar, so we only deal  with the first one.  Of course we can suppose that $a\neq 0$, so $a_{\nu}\neq 0$ for all $u_1\in\mathbb{C}^*$.
Since $\gamma'$ is a characteristic curve of $\mathcal{F}'$,
by Proposition \ref{form} the formal curve
 $$\hat{\gamma'}=(\sum\limits_{j=\nu}^{\infty}a_jz^j,\sum\limits_{j=\nu}^{\infty}b_jz^j)$$ is a parametrization of a formal separatrix $S'_{\hat{u}}$ of $\mathcal{F}'$. Moreover, $S'_{\hat{u}}$ is an isolated separatrix, otherwise $\gamma$ should be contained in a dicritical separatrix of $\mathcal{F}'$. We can apply Lemma \ref{puiseux} in order to obtain a parametrization
$$S'_{\hat{u}}=(x^{\nu},\sum\limits_{j=\nu}^{\infty}\sigma_jx^j),$$
where the coefficients $\sigma_j$ are polynomials in $1/u_1$ and in the coefficients of $\textrm{re}(\hat{u})$ and $\textrm{im}(\hat{u})$. Consider the map  $$\phi \colon \hat{u} \mapsto (\sigma_{\nu},\sigma_{\nu+1},\ldots).$$ Clearly we can identify the set of formal complex series in one variable with $\mathbb{C}^{\mathbb{N}}$.  Then the function $\phi$ is defined in some subset
$\hat{U}$ of $\mathbb{C}^*\times\mathbb{C}^{\mathbb{N}}$ and take values in the set
$$\Sigma=\left\{(\sigma_{\nu},\sigma_{\nu+1},\ldots)\in \mathbb{C}^{\mathbb{N}}\colon (x^{\nu},\sum\limits_{j=\nu}^{\infty}\sigma_jx^j)\in  \textrm{Iso}(\mathcal{F}')\right\},$$
where $ \textrm{Iso}(\mathcal{F}')$ is the set of  isolated separatrices of $\mathcal{F}'$. Observe that the set $\Sigma$ is finite and $\phi$ is continuous if we consider the product topology in $\mathbb{C}^{\mathbb{N}}$. Then it is sufficient to prove that $\hat{U}$ is connected, because in this case the map $\phi$ is constant and we can define $S'=S'_{\hat{u}}$ for any $\hat{u}\in\hat{U}$. We will prove that $\hat{U}$ is path connected.  From Example \ref{normal curves}, for any $u_1\in\mathbb{C}^*$ there exists a  characteristic curve of $\mathcal{F}$ whose strict transform by $\pi$ has a Taylor series at $p\in M$ given by $(u_1t,\hat{s}(u_1t))$. This shows that $C^*:=\mathbb{C}^*\times\{0\}^{\mathbb{N}}$ is contained in $\hat{U}$.  Since this set is path connected, it suffices to show that any $\hat{u}\in\hat{U}$ can be connected with some point in $C^*$ by a continuous path.  Fix $\hat{u}=(u_1,\ldots)\in \hat{U}$.  Then there exists a characteristic curve $\gamma$ of $\mathcal{F}$ such that its strict transform $\tilde{\gamma}=(u(t),v(t))$  by $\pi$  has $(\hat{u},\hat{s}\circ\hat{u})$ as its Taylor series at $p\in M$. We can assume that the image of $u(t)$  is contained in a sector of the form
$$V=\{re^{i\theta}\in\mathbb{C}: 0<r<\epsilon,\, a<\theta<b\}\,\, (\epsilon,a,b>0)$$ such that there exists a function $f\in\mathcal{O}(V)$ with the following properties:
\begin{enumerate}
\item the graph $\{(u,f(u)):u\in V\}$ is contained in a leaf of the foliation;
\item the function $f$ has the series  $\hat{s}=\sum\limits_{j=1}^{\infty}c_ju^j$ as asymptotic expansion at $0\in\mathbb{C}$.
\end{enumerate}
Consider  $\tilde{\gamma_0}(t)=(u(t),f(u(t)))$ and $\tilde{\gamma_1}(t)=(u_1 t,f(u_1 t))$ and observe the following:
\begin{enumerate}
\item $\tilde{\gamma_0}$ and $\tilde{\gamma_1}$ are the strict transforms by $\pi$ of  characteristic curves of $\mathcal{F}$ asymptotic to $S$;
\item $\tilde{\gamma_0}$ has $(\hat{u},\hat{s}\circ\hat{u})$ as its Taylor series;
\item If $\hat{u}_1:=(u_1,0,\ldots)\in C^*$, then $\tilde{\gamma_1}$ has $(\hat{u}_1,\hat{s}\circ\hat{u}_1)$ as its Taylor series.
\end{enumerate}
 Define the family of curves $$\Gamma_s(t)=\Big((1-s)u(t)+su_1 t,f\big((1-s)u(t)+su_1 t\big)\Big),\,\,s\in[0,1].$$ It is easy to see that
\begin{enumerate}
\item $\Gamma_0=\tilde{\gamma_0}$ and $\Gamma_1=\tilde{\gamma_1}$;
\item each $\Gamma_s$ is the strict transform of a characteristic curve asymptotic to $S$;
\item $\hat{\Gamma_s}=(\hat{u}_s,\hat{s}\circ\hat{u}_s)$, where $\hat{u}_s=u_1+\sum\limits_{j=2}^{\infty}(1-s)u_j t^j$.
\end{enumerate}
Then $\hat{u}_s$ defines a continuous path connecting $\hat{u}$ with $\hat{u}_1\in C^*$.
\qed

\section{Formal real equivalence  and equisingularity for curves}
\label{formalreal}

In this section we introduce the notion of formal real equivalence for formal complex curves at $(\mathbb{C}^2,0)$ and we prove that this notion implies the equisingularity property (Theorem \ref{formal-equisingular}).
A \emph{formal parametrized real surface} at $(\mathbb{C}^2,0)$ is a nonzero series in two variables of the form
\begin{equation}\sum\limits_{j,k\in\mathbb{N}}a_{jk}x^jy^k,\label{forsur}\end{equation} where $a_{jk}\in\mathbb{C}^2$ for all $j,k\in\mathbb{N}$. Naturally, a formal  parametrized complex curve $\sum_{j\in\mathbb{N}}\alpha_j z^j$ ($\alpha_j\in\mathbb{C}^2$) is also a formal parametrized real surface if we do the substitution $z=x+iy$. A \emph{ formal real reparametrization} of the surface \ref{forsur} is any series obtained by a substitution $(x,y)=\Psi(\bar{x},\bar{y})$, where $\Psi$ is a formal diffeomorphism of $(\mathbb{R}^2,0)$.
\begin{defn}  Let $\hat{\Phi}$ be a formal diffeomorphism of $(\mathbb{R}^4,0)$.  Let $\sigma(z)=\sum_{j\in\mathbb{N}}\sigma_j z^j$ and $\sigma'(z)=\sum_{j\in\mathbb{N}}\sigma'_j z^j$ be two formal parametrized irreducible complex curves at $(\mathbb{C}^2,0)$. We say that $\hat{\Phi}$ is a \emph{formal real equivalence} between $\sigma$ and $\sigma'$ if $\hat{\Phi}\circ\sigma$ is a formal  real reparametrization of $\sigma'$. In this situation we also say that $\sigma$ and $\sigma'$ are \emph{formally real equi\-va\-lent} by $\hat{\Phi}$.  In general, we say that $\hat{\Phi}$ is a formal real equivalence between two reduced formal complex curves  ${\mathscr{C}}$ and ${\mathscr{C}}'$ at  $(\mathbb{C}^2,0)$ if there is bijection between the irreducible components of ${\mathscr{C}}$ with the irreducible components of ${\mathscr{C}}'$ such that each pair of corresponding irreducible components are formally real equivalent by $\hat{\Phi}$.

\end{defn}

\begin{thm}
\label{formal-equisingular}
Let $\hat{\Phi}$ be a formal real equivalence  between two germs of reduced formal complex curves  ${\mathscr{C}}$ and ${\mathscr{C}}'$ at  $(\mathbb{C}^2,0)$. Then ${\mathscr{C}}$ and ${\mathscr{C}}'$ are equisingular.
\end{thm}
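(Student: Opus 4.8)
The plan is to show that the minimal embedded desingularizations of $\mathscr{C}$ and $\mathscr{C}'$ have isomorphic weighted dual trees, which is precisely the meaning of equisingularity for reduced plane curves. I will prove this by induction on the number $N$ of point blow-ups in the minimal resolution of $\mathscr{C}$, the inductive hypothesis being that two reduced formal complex curves related by a formal real equivalence and resolved in fewer blow-ups are equisingular. The one ingredient available at every stage is the preservation of multiplicities: if $\sigma_i(z)=a_{\nu_i}z^{\nu_i}+\cdots$ with $a_{\nu_i}\neq 0$ is an irreducible component and $\hat\Phi\circ\sigma_i=\sigma_i'\circ\Psi_i$, then, since the linear parts of both $\hat\Phi$ and the real reparametrization $\Psi_i$ are invertible, the order in $z$ of each side equals $\nu_i$; hence corresponding components have equal multiplicity and the total multiplicities of $\mathscr{C}$ and $\mathscr{C}'$ agree.

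The heart of the argument is a rigidity statement in the spirit of Proposition \ref{rosas}. Writing the real linear part of $\hat\Phi$ as $L(w)=Aw+B\bar w$ with $A,B$ complex $2\times 2$ matrices, and $\Psi_i(z)=\alpha_i z+\beta_i\bar z+\cdots$, I first compare the homogeneous parts of degree $\nu_i$ in $(z,\bar z)$ of $\hat\Phi\circ\sigma_i=\sigma_i'\circ\Psi_i$: the left side contributes only the monomials $z^{\nu_i}$ and $\bar z^{\nu_i}$, with coefficients $Aa_{\nu_i}$ and $B\bar a_{\nu_i}$, whereas the right side equals $a'_{\nu_i}(\alpha_i z+\beta_i\bar z)^{\nu_i}$; for $\nu_i\geq 2$ the vanishing of the mixed monomials forces $\alpha_i=0$ or $\beta_i=0$, so each reparametrization is holomorphic or antiholomorphic to leading order. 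Pushing this comparison to all orders --- which is exactly the constraint that $\hat\Phi\circ\sigma_i$ be a reparametrization of the \emph{complex} curve $\sigma_i'$ --- one shows that $\hat\Phi$ cannot mix the holomorphic and antiholomorphic behaviours, and that, after possibly composing with the complex conjugation $J$ (which only replaces $\mathscr{C}$ by its complex conjugate, an equisingular curve), the linear part $L$ is $\mathbb{C}$-linear. In particular $L$ carries the complex tangent line of each component $\sigma_i$ to that of $\sigma_i'$ and preserves the partition of the components by tangent direction, i.e. the points of the first exceptional divisor through which the strict transforms pass.

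For the inductive step I blow up the origin in source and target. Each component $\sigma_i$, regarded as the asymptotic complex curve of an injective pseudo-analytic curve, admits by Proposition \ref{charac-curve-blowup} a strict transform $\tilde\sigma_i$, again a formal complex curve, based at the infinitely near point $\tilde p_i$ determined by its tangent direction; the same holds for $\sigma_i'$ at $\tilde p_i'$, and by the previous paragraph the assignment $\tilde p_i\mapsto \tilde p_i'$ is well defined. Conjugating $\hat\Phi$ by the analytic blow-up charts now produces, near each point $\tilde p_i$, a formal real diffeomorphism taking $\tilde\sigma_i$ to a real reparametrization of $\tilde\sigma_i'$; thus the cluster of strict transforms sitting over each point of the first divisor is once more related by a formal real equivalence. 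Each such cluster is resolved in strictly fewer blow-ups, so the inductive hypothesis yields an isomorphism of the subtrees above every infinitely near point. Assembling these with the matching of multiplicities and of the first-divisor incidences produces an isomorphism of the full weighted dual trees of $\mathscr{C}$ and $\mathscr{C}'$, which is the asserted equisingularity; the base case of a single smooth branch is trivial.

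The main obstacle is precisely the rigidity of the second paragraph together with the lifting of the third. A merely real-linear $L$ need not send complex lines to complex lines, so one can neither transport tangent directions nor lift $\hat\Phi$ through the complex blow-up directly; it is only the all-order constraint $\hat\Phi\circ\sigma_i=\sigma_i'\circ\Psi_i$ --- forcing the reparametrizations to be holomorphic or antiholomorphic and excluding any mixture, the curve analogue of Proposition \ref{rosas} --- that restores compatibility with the complex structure and makes the inductive descent through blow-ups possible.
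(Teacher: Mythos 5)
Your route --- induction on the number of blow-ups, lifting $\hat{\Phi}$ through the resolution --- is genuinely different from the paper's, but it founders on the rigidity statement that you yourself flag as ``the main obstacle.'' The claim that the all-order constraint $\hat{\Phi}\circ\sigma_i=\sigma_i'\circ\Psi_i$ forces the linear part $L$ of $\hat{\Phi}$ to be $\mathbb{C}$-linear after possibly composing with $J$ is false. Take $\mathscr{C}=\mathscr{C}'=\{zw=0\}$ and $\hat{\Phi}(z,w)=(z,\bar{w})$: each branch is carried to a formal real reparametrization of a branch (indeed $(t,0)\mapsto(t,0)$ and $(0,t)\mapsto(0,\bar{t})$), so $\hat{\Phi}$ is a formal real equivalence in the sense of the paper's definition, yet neither $L$ nor $L\circ J$ is $\mathbb{C}$-linear. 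Proposition \ref{rosas} is not available here: it concerns $C^{\infty}$ equivalences of \emph{foliations}, a far stronger hypothesis than carrying finitely many branches onto branches. Your degree-$\nu_i$ comparison does show that each $\Psi_i$ is holomorphic or antiholomorphic to leading order when $\nu_i\ge 2$, but it gives nothing for smooth branches ($\nu_i=1$), and in any case a branch-by-branch dichotomy does not yield the ambient statement about $L$ that you need.

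Without that rigidity the inductive step collapses. A real diffeomorphism whose derivative is not $\mathbb{C}$-linear does not send complex lines through the origin to complex lines, hence induces no map of the exceptional $\mathbb{CP}^{1}$ of the \emph{complex} blow-up; consequently ``conjugating $\hat{\Phi}$ by the blow-up charts'' does not produce a formal diffeomorphism near $\tilde{p}_i$, and Proposition \ref{charac-curve-blowup} (which lifts real one-dimensional pseudo-analytic curves, not ambient four-dimensional diffeomorphisms) cannot substitute for this. Even the well-definedness of $\tilde{p}_i\mapsto\tilde{p}_i'$ --- that two branches sharing a tangent go to two branches sharing a tangent --- is not established for smooth branches. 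The paper circumvents all of this by never lifting $\Phi$: it truncates to the analytic curves $\mathscr{C}_n$, $\mathscr{C}'_n$ defined by high-order jets (equisingular to $\mathscr{C}$, $\mathscr{C}'$ for $n$ large), observes that after resolving $\xi'$ the real surface $\Phi_n(\xi_n)$ and the complex curve $\xi'_n$ become graphs through the same point of the divisor, glues an explicit homeomorphism there to correct $\Phi_n$ into a topological equivalence of $\mathscr{C}_n$ with $\mathscr{C}'_n$, and then invokes Zariski's theorem that topologically equivalent curves are equisingular. To save your induction you would first have to prove a correct, necessarily branch-by-branch, curve analogue of Proposition \ref{rosas}, and then still find a way to descend the matching of tangent cones and infinitely near clusters without descending the diffeomorphism itself.
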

\begin{proof}
Let $\xi^1,\ldots,\xi^m$ and $\xi'^1,\ldots,\xi'^m$  be the irreducible components of $\mathscr{C}$ and $\mathscr{C}'$ respectively and assume that $\hat{\Phi}$ maps $\xi^k$ to $\xi'^k$ for $k=1,\ldots,m$. Let $$\sigma_k(z)=\sum\limits_{j\ge 1}a_j(k) z^j,\,\,a_j(k)\in\mathbb{C}^2$$ be a formal parametrization of   $\xi^k$.
Then $\hat{\Phi}\circ \sigma_k$ is a real formal parametrization of the irreducible component ${\xi'}^k$ of $\mathscr{C}'$, that is, there exists a real formal diffeomorphism $\psi_k\colon(\mathbb{C},0) \to (\mathbb{C},0)$ such that $\hat{\Phi}\circ \sigma_k\circ\psi_k(z)$ is a complex formal parametrization of ${\xi'}^k$. Given $n\in\mathbb{N}$, let $\xi^k_n$ and ${\xi'}^ k_{n}$ be the complex curves defined by the $n$-jets of  $\sigma_k(z)$ and $\hat{\Phi}\circ \sigma_k\circ\psi_k(z)$ respectively. Let  $\mathscr{C}_n$ and  $\mathscr{C}_n'$ be the reduced curves whose irreducible components are  $\{\xi^k_n\colon k=1,\ldots,m\}$ and $\{{\xi'}^k_n\colon k=1,\ldots,m\}$, respectively.  We know that for $n$ large enough:
\begin{enumerate}
\item $\mathscr{C}$ and $\mathscr{C}_n$  are equisingular;
\item $\mathscr{C}'$ and $\mathscr{C}'_n$  are equisingular.
\end{enumerate} Then it is sufficient to prove that the analytic curves $\mathscr{C}_n$ and  $\mathscr{C}'_n$ are topologically equivalent for $n$ large enough.
For the sake of simplicity we denote $\xi^1$, ${\xi'}^1$, $\xi^1_n$, ${\xi'}^1_n$,  $\sigma_1$  and $\psi_1$
by $\xi$, $\xi'$, $\xi_n$ , $\xi'_n$, $\sigma$ and    $\psi$, respectively. Then $\hat{\Phi}\circ \sigma\circ\psi(z)$ is a complex formal parametrization of $\xi'$ and $\xi_n$ is defined by the $n$-jet $\sigma_n$ of $\sigma$. Since the curves $\xi_n$ and $\xi'_n$ are analytic,  we will use the same notation  for the sets defined by these curves. If $\Phi_n$ is the $n$-jet of $\Phi$,
the singular real surface  $\mathscr{S}$ given by ${\Phi}_n(\xi_n)$ is asymptotic to $\hat{\Phi}\circ \sigma\circ\psi(z)$ up to order $n$. In fact, if we consider the $n$-jet $\psi_n$ of $\psi$, the real parametrization $\Phi_n\circ\sigma_n\circ\psi_n(z)$ of ${\Phi}_n(\xi_n)$ has a Taylor series coinciding with the Taylor series of $\hat{\Phi}\circ \sigma\circ\psi(z)$ up to order $n$. After a finite sequence of complex blow-ups $\pi\colon (M,E)\to(\mathbb{C}^2,0)$, the strict transform $\tilde{\xi'}$ of $\xi'$  is a regular formal curve transverse to the exceptional divisor $E$ at a point $p$. Let $(x,y)$ be holomorphic coordinates on a neighborhood  of $p$ such that:
\begin{enumerate}
\item $p\simeq (0,0)$;
\item the exceptional divisor $E$ is given by $\{x=0\}$;
\item the curve $\tilde{\xi'}$ is given by a formal equation $y=\sum\limits_{j\ge 1}c_j x^j$.
\end{enumerate}
 The following properties hold for $n$ large enough:
\begin{enumerate}
\item the strict transform $\tilde{\xi_n'}$ of $\xi_n'$ by $\pi$ intersects $E$ at the point $p$ and  is given by an analytic equation of the form $y=\zeta(x)=c_1x+o(x)$ near of $p$;
\item the strict transform $\tilde{\mathscr{S}}$ of $\mathscr{S}$ by $\pi$ intersects $E$ at the point $p$ and  is given by a $C^{\infty}$ equation of the form $y=f(x)= c_1x+o(x)$ near of $p$.
\end{enumerate} Given $\epsilon>0$, there is a set $D=\{|x|\le a, |y|\le b\}$, with $0<a,b<\epsilon$, such that
\begin{enumerate}
\item $\tilde{\xi_n'}\cap D=\{y=\zeta(x), |x|\le a\}$ and
\item $\tilde{\mathscr{S}}\cap D=\{y=f(x), |x|\le a\}$.
\end{enumerate}
We can easily construct a homeomorphism $\tilde{h}\colon D\to D$ satisfying
\begin{enumerate}
\item $\tilde{h}(\tilde{\mathscr{S}}\cap D)=\tilde{\xi_n'}\cap D$ and
\item $\tilde{h}(x,y)=(x,y)$ if $|y|=b$,
\end{enumerate}
which is extended as a homeomorphism between two neighborhoods of $E$ by setting $\tilde{h}=\textrm{id}$ outside $D$, where $\textrm{id}$ stands for the identity map. Then the map $h=\pi\circ\tilde{h}\circ\pi^{-1}$ defines a homeomorphism between two  neighborhoods $U_1$ and $U_2$ of $0\in\mathbb{C}^2$ such that:
\begin{enumerate}
\item  $h(\mathscr{S}\cap U_1)=\xi'_n\cap U_2$;
\item $h(\pi(D)) = \pi(D)$ and $h=\textrm{id}$ outside $\pi(D)$.
\end{enumerate}
Thus,   the map $\mathfrak{h}=h\circ\Phi_n$ is a topological equivalence between $\xi_n$ and $\xi_n'$. Moreover, since $\mathfrak{h}$  coincides with $\Phi_n$ outside $\pi(D)$, a similar construction as above can be successively made in an infinitesimal neighborhood of  each irreducible component of $\mathscr{C}$  in order to obtain, for $n$ large enough,  a topological equivalence $\mathfrak{h}$ between $\mathscr{C}$ and $\mathscr{C}'$.
\end{proof}

We close this section by establishing a kind of ``factorization'' theorem for a real pa\-ra\-me\-tri\-za\-tion  of an irreducible complex curve.
In more precise terms, suppose that $\xi$ is an irreducible curve at $(\co^{2},0)$
defined by the formal equation $F(u,v) = 0$. Let $$\Gamma = (f(x,y),g(x,y))$$ be a formal
parametrized
real surface at $(\co^{2},0)$ whose ``image'' is contained in $\xi$, that is, such that $F(f,g) = 0$.  Then, Lemma \ref{lemarepa} asserts that  $\Gamma$ is a formal real reparametrization of a Puiseux parametrization of $\xi$.
This result and its   Corollary \ref{cororepa} will be important in the proof of Theorem  \ref{smooth-equisingular}.

\begin{lem}
\label{lemarepa}
Let $F$ be an irreducible element in $A=\mathbb{C}[[x,y]]$ and let $\sigma\in\mathbb{C}[[z]]$, $n\in\mathbb{N}$ be such that $(z^n,\sigma(z))$ is a Puiseux parametrization for the formal curve $F=0$.  Let $f,g\in A$ be such that $F(f,g)=0$.
Then there exists a series $\psi\in A$ such that $$(f,g)=\big(\psi^n, \sigma(\psi)\big).$$
\end{lem}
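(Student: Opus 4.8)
The plan is to read the hypothesis ring-theoretically and then invoke the universal property of the normalization of the branch $F=0$. Keep $A=\mathbb{C}[[x,y]]$ for the source, and regard $F$ as an element of a second power series ring $\mathbb{C}[[X,Y]]$, so that the Puiseux parametrization becomes the map $X=z^n$, $Y=\sigma(z)$, inducing an injection of the one-dimensional local domain $R:=\mathbb{C}[[X,Y]]/(F)$ into $\mathbb{C}[[z]]$ with $\bar X\mapsto z^n$ and $\bar Y\mapsto\sigma(z)$. Since a Puiseux parametrization of an irreducible germ is proper (primitive), $\mathbb{C}[[z]]$ is exactly the integral closure $\tilde R$ of $R$ in its fraction field; in particular $z\in\mathrm{Frac}(R)$ and $z$ is integral over $R$, as it satisfies the monic relation $z^n-\bar X=0$.

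The relation $F(f,g)=0$ says precisely that the $\mathbb{C}$-algebra homomorphism $\theta\colon\mathbb{C}[[X,Y]]\to A$ determined by $X\mapsto f$, $Y\mapsto g$ annihilates $F$, hence descends to $\bar\theta\colon R\to A$. Because $A$ is a domain, $\ker\bar\theta$ is prime, and as $R$ is a one-dimensional local domain its only primes are $(0)$ and $\mathfrak{m}_R$. In the case $\ker\bar\theta=\mathfrak{m}_R$ one has $f=\bar\theta(\bar X)=0$ and $g=\bar\theta(\bar Y)=0$, and the lemma holds trivially with $\psi=0$ (note $\sigma(0)=0$). So I may assume $\bar\theta$ is injective.

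Assuming injectivity is where the heart of the argument lies. Then $\bar\theta$ extends to fraction fields and I set $\psi:=\bar\theta(z)\in\mathrm{Frac}(A)$. Since $z$ is integral over $R$ and $A=\mathbb{C}[[x,y]]$ is integrally closed (being regular), $\psi$ is integral over $\bar\theta(R)\subseteq A$ and therefore $\psi\in A$. It then remains to verify the two identities: from $z^n=\bar X$ in $\tilde R$ I find $\psi^n=\bar\theta(z^n)=\bar\theta(\bar X)=f$, and in particular $\psi\in\mathfrak{m}_A$ because $f$ vanishes at the origin. As $\bar\theta$ is a local homomorphism of complete Noetherian local rings it is $\mathfrak{m}$-adically continuous, so it commutes with the infinite substitution defining $\sigma$ once $\psi\in\mathfrak{m}_A$; hence $\sigma(\psi)=\bar\theta(\sigma(z))=\bar\theta(\bar Y)=g$. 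This yields $(f,g)=(\psi^n,\sigma(\psi))$, as required.

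The main obstacle is precisely the step guaranteeing that $\psi=\bar\theta(z)$ is an honest element of $A$ and not a mere fraction: this is where the integral closedness of $\mathbb{C}[[x,y]]$ combines with the integrality of $z$ over $R$ — equivalently, with the fact that $\mathbb{C}[[z]]$ is the normalization of the branch — to do the work. Secondary care is needed to dispose of the degenerate case $(f,g)=(0,0)$ via the kernel dichotomy and to justify that $\bar\theta$ commutes with the power series substitution defining $\sigma(\psi)$, which follows from completeness as soon as one knows $\psi$ has positive order.
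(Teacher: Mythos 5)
Your proof is correct, and it takes a genuinely different route from the paper's. You read the hypothesis as a local homomorphism $\bar\theta\colon R=\mathbb{C}[[X,Y]]/(F)\to A$ and let the universal property of normalization do the work: primitivity of the Puiseux parametrization gives $\mathrm{Frac}(R)=\mathbb{C}((z))$, so $z\in\mathrm{Frac}(R)$ is integral over $R$ via $z^n=\bar X$, whence $\psi=\bar\theta(z)$ lands in the normal ring $A$, and $\mathfrak{m}$-adic continuity of the induced local map $\mathbb{C}[[z]]\to A$ identifies it with substitution by $\psi$, giving $g=\sigma(\psi)$. The paper instead first reduces the lemma to producing an $n$-th root of $f$ in $A$, using the Puiseux factorization $F(t^n,y)=U\prod_{\xi^n=1}\bigl(y-\sigma(\xi t)\bigr)$ --- the same primitivity input you use --- and then proves that such a root exists by contradiction: after arranging that $z^n-f$ is irreducible over $\mathrm{Frac}(A)$, it passes to one-variable series over $K=\mathbb{C}(t)$ via the substitution $h\mapsto h(x,tx)$, extracts an $n$-th root of $f^*$ in $\overline{K}[[x]]$, and uses a degree-minimality argument to force $g=0$ and hence $n=1$. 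Your argument is shorter, more conceptual, and generalizes verbatim to any normal complete local target; the paper's is more elementary in that it never needs to identify $\mathbb{C}[[z]]$ with the normalization of the branch. The one input you should make explicit is that $z\in\mathrm{Frac}(R)$, i.e., that the $n$ conjugates $\sigma(\xi z)$ are pairwise distinct so that $\sigma(z)$ has degree exactly $n$ over $\mathbb{C}((z^n))$; this is standard, but it is precisely where primitivity of the Puiseux parametrization enters, and it is the counterpart of the factorization the paper quotes. Your handling of the side issues --- the kernel dichotomy in the one-dimensional local domain $R$, the membership $\psi\in\mathfrak{m}_A$ needed for $\sigma(\psi)$ to make sense, and the continuity argument identifying $\bar\theta(\sigma(z))$ with $\sigma(\psi)$ --- is sound.
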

\begin{proof} We will first show that it is sufficient to prove that $f$ has an $n^{\text{\tiny th}}$ root in $A$. Suppose that there exists $\phi\in A$ such that $\phi^n=f$. By Puiseux's Theorem we have that \begin{equation}\label{puiseuxfact}F(t^n,y)=U\prod\limits_{\xi^n=1}\big(y-\sigma(\xi t)\big),\end{equation}
where $U$ is a unit in $\mathbb{C}[[t,y]]$. Since $F(\phi^n,g)=F(f,g)=0$, we conclude from equation \eqref{puiseuxfact} that $g=\sigma(\xi\phi)$ for some $\xi$ such that $\xi^n=1$. Therefore it suffices to take $\psi=\xi\phi$.

Let us prove that $f$ has an $n^{\text{\tiny th}}$ root in $A$. We exclude the trivial case $n= 1$ and suppose by contradiction  that $f$ has no $n^{\text{\tiny th}}$ root in $A$. Denote by $Q$ the field of fractions of $A$.
At first we will show that, without loss of generality, we can assume that the polynomial $z^n-f$ is irreducible in $Q[z]$.
Let $d\in\mathbb{N}$ be the greatest divisor of $n$ such that $f$ has a
$d^{\text{\tiny th}}$ root in $A$. Then there exists $\tilde{f}\in A$ such that $f=\tilde{f}^d$. Since $F(\tilde{f}^{d},g)=0$, for some irreducible factor $\tilde{F}$ of $F(x^d,y)$ we have $\tilde{F}(\tilde{f},g)=0$.  If we set $\tilde{n}=\frac{n}{d}$, since $f=\tilde{f}^d$ has no $n^{\text{\tiny th}}$ root in $A$, we have that  $\tilde{f}$ has no ${\tilde{n}}^{\text{\tiny th}}$ root in $A$.  Therefore, we have that $\tilde{F}$, $\tilde{f}$ and $g$  satisfy the hypothesis of the lemma and $\tilde{f}$ has no $\tilde{n}^{\text{\tiny th}}$ root in $A$. Moreover, from the maximality of $d$ we see that, for any divisor $k\neq 1$  of $\tilde{n}$, the series  $\tilde{f}$ has no $k^{\text{\tiny th}}$ root in $A$. Thus, without loss of generality we can assume that $f$ has no $k^{\text{\tiny th}}$ root in $A$ for any divisor $k\neq 1$ of $n$. This implies that, for any  divisor $k\neq 1$ of $n$, the element $f$ has no $k^{\text{\tiny th}}$ root in the field $Q$ of fractions of $A$. From this we conclude that the polynomial $z^n-f$ is irreducible in $Q[z]$ (see, for instance, \cite[Ch. VI.9]{lang2002}).

Given any $h\in A$, define $h^*(t,x)=h(x,tx)$. If  we write $h=\sum_{j=0}^{\infty}h_j$, where $h_j$ is the homogeneous polynomial of degree $j$ in $\mathbb{C}[x,y]$, we obtain that
$$h^*(t,x)=\sum_{j=0}^{\infty} h_j(1,t)x^j.$$
Notice that $h_{j}(1,t)$ is a polynomial of degree at most $j$, so that the map $h\mapsto h^*$ defines an isomorphism from $A$ into a ring
 $A^*$ contained in the ring $K[[x]]$ of formal power series with coefficients in the field $K=\mathbb{C}(t)$ of complex rational functions in the variable $t$.
In particular,  if $f=\sum\limits_{j\ge\nu}f_j$ with $f_{\nu}\neq 0$, we obtain that $$f^*=\sum\limits_{j\ge\nu}^{\infty}f_j(1,t)x^j, \quad f_{\nu}(1,t)\neq 0.$$
 If $\bar{K}$ is the algebraic closure of $K$, we know that  the series $f^*$ has an $n^{\text{\tiny th}}$ root $\psi$ in $\bar{K}[[x]]$. Then $\psi$ is a root of the  polynomial $z^n-f^*\in A^*[z]$.
Since the polynomial $z^n-f$ is irreducible in $Q[z]$, it follows that  $n$ is the minimum degree of a nonzero polynomial in $A^*[z]$
 having $\psi$ as a root.    Since $\mathbb{C}\subset \bar{K}$, we will consider $F$ as an element in  $\bar{K}[[x,y]]$. Then, since $F(f^*,g^*)=0$ and
 $f^*=\psi^n$, it follows from Puiseux's Theorem in $\bar{K}[[x,y]]$ that $g^*=\sigma(\xi\psi)$ for some  $\xi$, $\xi^n=1$. Without loss
of generality we can assume that $\xi=1$. If we do the substitution $\psi^n=f^*$ in the equation
$g^*=\sigma(\psi)$, for some series $\sigma_1,\ldots,\sigma_{n-1}\in \mathbb{C}[[z]]$ we obtain an equation of the form
\begin{equation}\nonumber-g^*+\sigma_1(f^*)\psi+\ldots+\sigma_{n-1}(f^*)\psi^{n-1}=0.\end{equation}
 Thus, since $\sigma_j(f^*)=\big(\sigma_{j}(f)\big)^*\in A^*$, we have that $\psi$ is a root of the polynomial
\begin{equation}P=\nonumber-g^*+\sigma_1(f^*)z+\ldots+\sigma_{n-1}(f^*)z^{n-1}\in A^*[z].\end{equation}
 Then, since $n$ is the minimum degree of a polynomial in $A^*[z]$ vanishing on $\psi$, we conclude that $P=0$. Then $g=0$ and consequently we have the equation $F(f,0)=0$. Therefore, if we express $$F(x,y)=\sum\limits_{j\ge 0}s_j(x)y^j$$ with $s_j(x)\in\mathbb{C}[[x]]$, we obtain that $s_0(f)=0$. This implies that $s_0=0$, because $f\neq 0$. Then, since $F$ is irreducible, we have that $F=Uy$ for some unit $U\in A$. But this implies that $n=1$, which is a contradiction.     \end{proof}

 Let $F$ be an irreducible element in $\mathbb{C}[[x,y]]$. We say that  a  formal  parametrized complex curve
$$\Gamma(z)=\sum\limits_{j\in\mathbb{N}}a_{j}z^j,\, a_j\in\mathbb{C}^2$$ is a complex parametrization of the curve $F=0$ if $\Gamma\neq 0$ and we have $F(\Gamma(z))=0$. We say that the complex parametrization $\Gamma$ is reducible if there exist another formal  parametrized complex curve $\tilde{\Gamma}$ and an element $\varphi\in\mathbb{C}[[z]]]$ with $\textrm{ord}(\varphi)>1$ such that $\Gamma(z)=\tilde{\Gamma}(\varphi(z))$. Otherwise we say that $\Gamma$ is an irreducible complex parametrization of $F=0$. As a consequence of Lemma \ref{lemarepa}, we have:
\begin{cor}\label{cororepa}
 Let $F$ be an irreducible element in $\mathbb{C}[[x,y]]$ and let $\sigma\in\mathbb{C}[[z]]$, $n\in\mathbb{N}$ be such that $(z^n,\sigma(z))$ is a Puiseux parametrization for the formal curve $F=0$.  Let  $\Gamma$ be any irreducible complex parametrization of  $F=0$.
Then there exists a formal complex diffeomorphism $\varphi\in \mathbb{C}[[z]]$ such that $$\Gamma=\big(\varphi^n, \sigma(\varphi)\big).$$
\end{cor}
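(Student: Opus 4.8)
The plan is to extract the required diffeomorphism directly from Lemma \ref{lemarepa}, invoking the irreducibility hypothesis only at the very end. Write $\Gamma(z)=(f(z),g(z))$ with $f,g\in\mathbb{C}[[z]]$; since $\Gamma$ is a complex parametrization of $F=0$, we have $F(f,g)=0$. I would then apply Lemma \ref{lemarepa} to the pair $(f,g)$ --- regarding $f$ and $g$ as elements of $A=\mathbb{C}[[x,y]]$ that happen to depend only on the first variable --- to produce a series $\psi$ with $(f,g)=(\psi^n,\sigma(\psi))$. Because $\psi^n=f$ already lies in $\mathbb{C}[[z]]$ and $\mathbb{C}[[x,y]]$ is a unique factorization domain, any $n^{\text{th}}$ root of a series in the single prime $x$ is again a series in $x$ alone (up to a root of unity), so this $\psi$ is forced to lie in $\mathbb{C}[[z]]$.

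Setting $\Gamma_0(w)=(w^n,\sigma(w))$, which is a nonzero formal parametrized complex curve (indeed the standard Puiseux parametrization of $F=0$, since $F(w^n,\sigma(w))=0$), the conclusion of the lemma reads $\Gamma=\Gamma_0\circ\psi$. As every parametrization vanishes at the origin, $\Gamma(0)=0$ gives $\psi(0)^n=0$, hence $\psi(0)=0$ and $\mathrm{ord}(\psi)\ge 1$. It remains only to rule out $\mathrm{ord}(\psi)>1$.

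This is exactly where irreducibility enters: the identity $\Gamma=\Gamma_0\circ\psi$ writes $\Gamma$ in the form $\tilde{\Gamma}\circ\varphi$ with $\tilde{\Gamma}=\Gamma_0$ and $\varphi=\psi$, so if $\mathrm{ord}(\psi)>1$ then $\Gamma$ would be a reducible complex parametrization, contrary to hypothesis. Therefore $\mathrm{ord}(\psi)=1$, i.e.\ $\psi$ is an invertible series, hence a formal complex diffeomorphism, and taking $\varphi=\psi$ yields $\Gamma=(\varphi^n,\sigma(\varphi))$. I expect the only slightly delicate point to be the passage from the two-variable formulation of Lemma \ref{lemarepa} to this one-variable parametrization setting, together with the check that the $n^{\text{th}}$ root $\psi$ remains a series in $z$ alone; once this bookkeeping is done the remainder is a short formal manipulation and the appeal to irreducibility is immediate.
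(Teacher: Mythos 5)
Your proposal is correct and follows essentially the same route as the paper: apply Lemma \ref{lemarepa} to $(f,g)$ viewed inside $\mathbb{C}[[x,y]]$, observe that $\psi^n=f$ forces $\psi$ to be a one-variable series, and then invoke irreducibility of the parametrization to conclude $\mathrm{ord}(\psi)=1$. The only (immaterial) difference is that you embed $f,g$ as series in the single variable $x$, whereas the paper embeds them via $z=x+iy$ as formal real surfaces, and the paper additionally notes explicitly that $f\neq 0$ (since the curve is not the $y$-axis) before extracting the root.
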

\begin{proof} Let $\Gamma=(f,g)$, where $f,g\in\mathbb{C}[[z]]$. Since $F(f,g)=0$ and $(f,g)$ can be considered as a formal real surface, by  Lemma \ref{lemarepa} there exists $\psi\in\mathbb{C}[[x,y]]$ such that  $$(f,g)=\big(\psi^n, \sigma(\psi)\big).$$ Since $(z^n,\sigma(z))$ is a Puiseux parametrization for  the curve $F=0$,  this curve is different from
 the $y$-axis and therefore $f\neq 0$. Then $\psi\neq 0$ and, since $$\psi^n=f\in\mathbb{C}[[z]],$$ we deduce that $\psi$ is in fact a non-null complex series: there exists $\varphi\in\mathbb{C}[[z]]$, $\varphi\neq 0$  such that $$\psi(x,y)=\varphi(x+iy).$$ Then we have that  $$\Gamma=\big(\varphi^n, \sigma(\varphi)\big)$$ and, since $\Gamma$ is an irreducible complex parametrization, we conclude that $\textrm{ord}(\varphi)=1$ and therefore $\varphi$ is a formal complex diffeomorphism.

\end{proof}

\section{$C^{\infty}$ equivalences of foliations and equisingularity of the set of separatrices}
\label{equivalence}

This section is devoted to prove Theorem \ref{smooth-equisingular}.

\begin{thm}\label{smooth-formal} Let $\Phi$ be a $C^{\infty}$ equivalence between two germs $\mathcal{F}$ and  $\mathcal{F}'$ of singular holomorphic foliations at $(\mathbb{C}^2,0)$. Let $S$ be a formal separatrix of $\mathcal{F}$ and let $S'=\Phi_*(S)$ be the corresponding separatrix of $\mathcal{F}'$ according to Theorem \ref{inv}.  Then $\hat{\Phi}$ is a  formal real equivalence between $S$ and $S'$.
\end{thm}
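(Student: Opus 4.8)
The plan is to compare the formal real surface $\hat{\Phi}\circ\sigma$ with a parametrization $\sigma'$ of $S'=\Phi_*(S)$, where $\sigma$ is a primitive (Puiseux) parametrization of $S$ and $S'$ is the separatrix supplied by Theorem \ref{inv}. Since $S$ and $S'$ are irreducible, by definition it suffices to produce a formal diffeomorphism $\Psi$ of $(\mathbb{R}^2,0)$ with $\hat{\Phi}\circ\sigma=\sigma'\circ\Psi$. I would carry this out in three stages: show that the image of $\hat{\Phi}\circ\sigma$ is contained in $S'$; factor $\hat{\Phi}\circ\sigma$ through a Puiseux parametrization of $S'$ by means of Lemma \ref{lemarepa}; and verify that the resulting reparametrization is a genuine diffeomorphism. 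The heart of the matter lies in the last two stages, where the soft statement ``lying in $S'$'' must be upgraded to invertibility of $\Psi$.

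For the first stage, write $S'=\{F'=0\}$ with $F'$ irreducible. The essential input is an entire family of test curves: for each direction $\eta\in\mathbb{C}^*$, Example \ref{normal curves} (and, when $S$ is a convergent strong or dicritical separatrix, Examples \ref{exam2} and \ref{exam1}) produces a characteristic curve $\gamma_\eta$ of $\mathcal{F}$ asymptotic to $S$ whose Taylor series is the reparametrization $\hat{\gamma}_\eta(t)=\sigma(\eta t)$. By Theorem \ref{inv} the curve $\Phi(\gamma_\eta)$ is asymptotic to $S'$, and by Proposition \ref{tainv} its Taylor series equals $\hat{\Phi}\circ\hat{\gamma}_\eta$; hence, by Proposition \ref{form}, $\hat{\Phi}(\sigma(\eta t))$ parametrizes $S'$ and in particular $F'(\hat{\Phi}(\sigma(\eta t)))=0$ as a series in $t$, for every $\eta\in\mathbb{C}^*$. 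Viewing $H(z,\bar z):=F'(\hat{\Phi}\circ\sigma)$ as a formal series in $z,\bar z$ with $z=x+iy$, this says that $\sum_{j+k=d}h_{jk}\eta^j\bar\eta^k=0$ for all $\eta$ and all $d$; writing $\eta=re^{i\theta}$, the characters $e^{i(j-k)\theta}$ attached to the indices with $j+k=d$ are pairwise distinct, so each $h_{jk}$ vanishes. Hence $F'(\hat{\Phi}\circ\sigma)=0$ and the image of $\hat{\Phi}\circ\sigma$ is contained in $S'$.

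Writing $\hat{\Phi}\circ\sigma=(f,g)$ with $f,g\in\mathbb{C}[[x,y]]$ and choosing a Puiseux parametrization $(z^n,\tilde{\sigma}(z))$ of $S'$, Lemma \ref{lemarepa} provides $\psi\in\mathbb{C}[[x,y]]$ with $\hat{\Phi}\circ\sigma=(\psi^n,\tilde{\sigma}(\psi))$. As $\hat{\Phi}$ is a diffeomorphism, $\hat{\Phi}\circ\sigma$ has the same order $m$ as $\sigma$, namely the multiplicity of $S$, and since $\mathrm{ord}(\tilde{\sigma})\ge n$ this forces $\mathrm{ord}(\psi)=m/n$. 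Applying the same construction to $\Phi^{-1}$ — which is licit because $(\Phi^{-1})_*(S')=S$ by the uniqueness clause of Theorem \ref{inv} — yields symmetrically that $n/m$ is a positive integer. Therefore $m=n$ and $\mathrm{ord}(\psi)=1$. This symmetric use of the inverse equivalence to pin down the multiplicity is the step I expect to be the most delicate.

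Finally, to see that $\psi$ is a diffeomorphism I would analyze its linear part. By Proposition \ref{rosas}, $\hat{\Phi}_1$ is either $\mathbb{C}$-linear or $\mathbb{C}$-antilinear, so the leading ($m$-th order) term of $\hat{\Phi}\circ\sigma$ is a nonzero multiple of a pure power $z^m$ or $\bar{z}^m$ (one uses here that the leading coefficient $a_m$ of $\sigma$ is nonzero and $\hat{\Phi}_1$ is invertible). Comparing with the leading term $\psi_1^{\,m}$ arising from $(\psi^m,\tilde{\sigma}(\psi))$ and invoking unique factorization in $\mathbb{C}[z,\bar z]$, the linear part $\psi_1$ must be a nonzero multiple of $z$, respectively of $\bar z$; in either case $(x,y)\mapsto\psi_1$ is an $\mathbb{R}$-linear isomorphism of $\mathbb{R}^2$. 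Hence $\psi$ is a formal diffeomorphism of $(\mathbb{R}^2,0)$, and $\hat{\Phi}\circ\sigma=(z^n,\tilde{\sigma}(z))\circ\psi$ is a formal real reparametrization of the Puiseux parametrization of $S'$, and thus of $\sigma'$ as well, since any two irreducible complex parametrizations of $S'$ differ by a complex reparametrization by Corollary \ref{cororepa}. This realizes $\hat{\Phi}$ as a formal real equivalence between $S$ and $S'$, as required.
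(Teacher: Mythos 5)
Your proposal is correct and follows essentially the same route as the paper's proof: vanishing of $F'\circ\hat{\Phi}\circ\sigma$ via the one-parameter family of characteristic curves from Example \ref{normal curves} together with Theorem \ref{inv}, factorization through a Puiseux parametrization of $S'$ by Lemma \ref{lemarepa}, the symmetric use of $\Phi^{-1}$ to equate the multiplicities, and the analysis of the linear part via Proposition \ref{rosas}. The only differences are cosmetic: the paper works with the blown-down parametrization $\pi(u,\hat{s}(u))$ rather than the Puiseux one, and it extracts the vanishing of the second leading coefficient from the tangency normalization of $S'$ instead of from the comparison with $(\psi^{n},\tilde{\sigma}(\psi))$ as you do.
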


\begin{proof} Take coordinates $(z,w)$ in $(\mathbb{C}^2,0)$ an suppose that $S'$ is defined by a formal equation ${F}=0$, where $F\in\mathbb{C}[[z,w]]$ is irreducible. As a first step, considering $S$ as a formal real surface,  we will prove that  ${F}\circ\hat{\Phi}\circ{S}=0$.  Since this is obvious if $S$ is convergent, we assume that $S$ is a weak separatrix.
Let $\pi: (M,E) \rightarrow (\mathbb{C}^2,0)$ be the reduction of singularities of $\mathcal{F}$.  Then, the strict transform $\tilde{S} = \pi^{*}S$  is the weak separatrix of a saddle-node singularity at some $p\in E$.  Let $(u,v)$ be local holomorphic coordinates at $p\in E$ such that:
\begin{enumerate}
\item $p\simeq (0,0)$;
\item $E$ is given by $\{u=0\}$.
\end{enumerate} There exists a formal series $\hat{s}(u)=\sum\limits_{j=1}^{\infty}c_ju^j$ such that   $\tilde{S}$ is given by $v=\hat{s}(u)$, hence
 the separatrix $S(u)=\pi\big(u,\hat{s}(u)\big)$ is parametrized as a real surface by $${S}(x,y)={\pi}(x+iy,\hat{s}(x+iy)).$$
In order to prove that ${F}\circ\hat{\Phi}\circ{S}=0$ it is sufficient to show that, if  $\alpha,\beta\in\mathbb{R}^*$ are arbitrarily chosen, then the series  $${f}(t):={F}\circ\hat{\Phi}\circ{S}(\alpha t,\beta t)$$  is null. If we set $\eta=\alpha+i\beta$, the series ${f}$ can be expressed as
$${f}={F}\circ\hat{\Phi}\circ{S}(\eta t).$$
As we have seen in Example \ref{normal curves}, we know that ${S}(\eta t)$ is the Taylor series of a characteristic curve $\gamma$ of $\mathcal{F}$ asymptotic to $S$. Then, since $\gamma':={\Phi}(\gamma)$ is a characteristic curve of $\mathcal{F}'$ asymptotic to $S'$, we deduce that $\hat{\gamma'}=\hat{\Phi}\circ{S}(\eta t)$ is a complex formal parametrization of $S'$ and therefore
$${f}={F}\circ\hat{\Phi}\circ {S}(\eta t)={F}\circ\hat{\gamma'}=0.$$

Without loss of generality we can assume that both curves $S$ and $S'$ are tangent to the $z$ axis, which implies the following properties:
\begin{enumerate}
\item $S$ has a Puiseux parametrization $(T^n, \sigma(T))$, where $n$ is the multiplicity of the curve $S$ and  $\sigma\in\mathbb{C}[[T]]$, $\textrm{ord}(\sigma)>n$;
\item $S'$ has a Puiseux parametrization $(T^{n'}, \sigma'(T))$, where $n'$ is the multiplicity of the curve $S'$ and  $\sigma'\in\mathbb{C}[[T]]$, $\textrm{ord}(\sigma')>n'$.
\end{enumerate}
By Proposition \ref{rosas} and without loss of generality --- the other case is similar --- we can assume that $\hat{\Phi}(z,w)$ has a complex linear part \begin{equation}\label{equli} \hat{\Phi}_1(z,w)=(az+bw, cz+dw),\,ad-bc\neq 0.\end{equation}
Suppose that
\begin{equation}\label{equor} {S}(u)=\big(\sum_{j\ge \bar{n}}\alpha_j u^j,\sum_{j\ge \bar{n}}\beta_j u^j\big),\, (\alpha_{\bar{n}},\beta_{\bar{n}})\neq (0,0).\end{equation}
Since it is an irreducible parametrization, by Corollary \ref{cororepa} there exists a formal diffeomorphism $\varphi\in\mathbb{C}[[u]]$ such that ${S}=\big(\varphi^n,\sigma(\varphi)\big)$, hence $\bar{n}=n$ and $\beta_{{n}}=0$.
Therefore, from \eqref{equli} and \eqref{equor} above we have that the initial part of $\hat{\Phi}\circ{S}$ is complex and is given by
$$\big(\hat{\Phi}\circ{S}\big)_1=\big(a\alpha_{n}u^n,c\alpha_nu^n\big).$$
Since $S'$ is tangent to the $z$ axis, we have that ${F}$ has an initial part of the form $F_N=\mu y^N$, $\mu\neq 0$, $N\in\mathbb{N}$. Then, since $F\circ\hat{\Phi}\circ{S}=0$ implies $F_N\circ\big(\hat{\Phi}\circ{S}\big)_1=0$, we deduce that $c=0$ and consequently $a\neq 0$. By Lemma \ref{lemarepa}, since $F\big(\hat{\Phi}\circ{S}\big)=0$, there exists $\psi\in\mathbb{C}[[x,y]]$ such that
$$\hat{\Phi}\circ{S}(x+iy)=\big(\psi^{n'},\sigma'(\psi)\big).$$ Then, since $\big(\hat{\Phi}\circ{S}\big)_1=\big(a\alpha_{n}u^n,0\big)$, the initial part  $\psi_{\nu}$, $\nu\in\mathbb{N}$  of $\psi$ satisfies the equality $a\alpha_{n}u^n=\psi_{\nu}^{n'}$.
 Then $n=n'\nu$ and therefore $n'\le n$. A similar argument using the inverse diffeomorphism $\Phi^{-1}$ allows us to conclude that $n=n'$, $\nu=1$ and, consequently, $\psi$ has a linear part of the form $\sqrt[n]{a\alpha_n}(x+iy)$. Thus $\psi$ is a formal real diffeomorphism and therefore $\hat{\Phi}\circ{S}$ is a formal real reparametrization of $S'$.
 \end{proof}

\noindent\emph{Proof of Theorem \ref{smooth-equisingular}}. It is a direct consequence of Theorems \ref{smooth-formal} and \ref{formal-equisingular}.

\section{The proof of Theorem \ref{equi-2nd-type-thm}}
\label{theproof}

Let $\F$ and $\F'$ be germs of foliations,   equivalent by a germ of $C^{\infty}$ diffeomorphism
$$\Phi: (\mathbb{C}^{2},0) \to (\mathbb{C}^{2},0).$$
Let $S \in \sep(\F)$ be a branch of separatrix and $S' = \Phi_{*} S \in \sep(\F')$ be the corresponding
separatrix given    by Theorem~\ref{smooth-equisingular}. This result also asserts that, if $\mathscr{S} = \cup_{i=1}^{k}S_i$ is a reduced curve formed by
the union of a finite number of branches in $\sep(\F)$, we set $\mathscr{S}' = \Phi_{*}\mathscr{S} =\cup_{i=1}^{k}\Phi_{*}S_i$, then $\mathscr{S}$ and $\mathscr{S}'$ are equisingular. As a consequence, we have that  $S \in \iso(\F)$ if and only if $S' \in \iso(\F')$ and $S \in \dic(\F)$ if and only if $S' \in \dic(\F')$. We have clearly the following more general fact:

\begin{prop}
\label{s-equising}
$\F$ and $\F'$ are $\cl{S}$-equisingular.
\end{prop}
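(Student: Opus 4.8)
The plan is to reduce the statement, which concerns the possibly infinite separatrix sets, to the equisingularity of a single finite sub-curve, where Theorem~\ref{smooth-equisingular}(2) applies directly. Recall that $\Phi_{*}\colon \sep(\F) \to \sep(\F')$ is a bijection --- its inverse being $(\Phi^{-1})_{*}$, since both are characterized through characteristic curves in Theorem~\ref{inv} --- and that, as observed above, it preserves the decomposition $\sep = \iso \cup \dic$. We must show $\mathbb{A}^{*}_{\cl{S}}(\F) = \mathbb{A}^{*}_{\cl{S}}(\F')$, that is, that the two $\cl{S}$-desingularizations have the same weighted dual tree.

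First I would select a finite collection $\mathscr{S} \subset \sep(\F)$ whose minimal embedded resolution coincides, as a morphism, with the $\cl{S}$-desingularization $\pi\colon (M,E)\to(\co^2,0)$ of $\F$. Such a collection exists: it suffices to take all the (finitely many) branches in $\iso(\F)$ together with, for each of the finitely many dicritical components of $E$, two dicritical branches attached to it at distinct trace points. The isolated branches force every blow-up in the non-dicritical part of $\pi$, while each chosen pair of dicritical branches separates exactly on the corresponding dicritical component and thus forces its creation; hence no blow-up of $\pi$ is missed, and since $\mathscr{S} \subset \sep(\F)$ is already resolved by $\pi$, none is superfluous. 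To make the argument symmetric, I would enlarge $\mathscr{S}$ by adjoining $(\Phi^{-1})_{*}(\mathscr{T})$, where $\mathscr{T} \subset \sep(\F')$ is a finite collection realizing the $\cl{S}$-desingularization of $\F'$ in the same fashion. The enlarged $\mathscr{S}$ still realizes the $\cl{S}$-desingularization of $\F$, and its image $\mathscr{S}' = \Phi_{*}(\mathscr{S}) \supseteq \mathscr{T}$ realizes that of $\F'$.

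Now Theorem~\ref{smooth-equisingular}(2) applies to the finite reduced curve $\mathscr{S}$ and its image $\mathscr{S}'$: they are equisingular, so their minimal embedded resolutions --- which are precisely the two $\cl{S}$-desingularizations --- have isomorphic weighted dual trees. In particular the underlying graphs, the arrows, and the self-intersection weights $n_{1}$ agree, under a tree isomorphism matching each branch of $\mathscr{S}$ with its $\Phi_{*}$-image. It remains to match the weights $n_{2}(D) = \#\sep(D)$. If $D$ is non-dicritical, every separatrix attached to $D$ is isolated and hence lies in $\mathscr{S}$, so $n_{2}(D)$ equals the number of branches of $\mathscr{S}$ attached to $D$; the tree isomorphism carries this to the number of branches of $\mathscr{S}'$ attached to the corresponding component $D'$, and that number equals $n_{2}(D')$, since $D'$ carries $\Phi_{*}$-images of isolated branches and is therefore non-dicritical as well. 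If $D$ is dicritical, then $n_{2}(D) = \infty$, and since the correspondence respects $\dic$, the matched component $D'$ is dicritical too, giving $n_{2}(D') = \infty$. Therefore $\mathbb{A}^{*}_{\cl{S}}(\F) = \mathbb{A}^{*}_{\cl{S}}(\F')$.

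The delicate points, which I would treat with care rather than the bookkeeping above, lie in the second paragraph: verifying that the finite collection $\mathscr{S}$ genuinely forces the entire $\cl{S}$-desingularization, so that the embedded resolution of $\mathscr{S}$ does not stop short of $\pi$, and that the dicritical or non-dicritical nature of each component --- equivalently the value of $n_{2}$ --- is read off correctly and transported consistently by $\Phi_{*}$. The symmetrization through $(\Phi^{-1})_{*}(\mathscr{T})$ is exactly what guarantees that $\mathscr{S}'$ is large enough to realize the $\cl{S}$-desingularization of $\F'$, and not merely a partial resolution of it.
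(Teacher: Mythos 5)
Your proposal is correct and follows exactly the route the paper intends: the paper states this proposition without proof, treating it as an immediate consequence of Theorem~\ref{smooth-equisingular}(2) together with the $\Phi_*$-correspondence and the preservation of the decomposition $\sep = \iso \cup \dic$, and your reduction to a finite subcurve realizing the $\cl{S}$-desingularization is precisely the missing bookkeeping. The one point you rightly flag as delicate --- that isolated branches plus two dicritical branches per dicritical component force the whole $\cl{S}$-desingularization --- does go through, the key observation being that a branch attached to a dicritical component $D$ at a trace point has as its infinitely near points exactly those creating $D$ (any further blow-up over its attachment point would re-attach it to a later component), so it demands nothing beyond the creation of $D$, which the chosen pair already forces.
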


Suppose now that $\hat{F}$ is a balanced equation
of separatrices for $\F$, whose divisor is  as in \eqref{divisor-bal-eq}. We define
$\hat{F}' = \Phi_{*} \hat{F}$
 as any  formal meromorphic function  corresponding to  the following divisor
\[
 (\hat{F}')_{0}-(\hat{F}')_{\infty} \ = \
\sum_{S\in {\rm Iso}(\F)} (S')+ \sum_{S\in {\rm Dic}(\F)}\ a_{S} (S').
\]

\begin{lem}
\label{dic-correspondence}
Let $\F$ and $\F'$ be  germs of foliations,   equivalent by a germ of $C^{\infty}$ diffeomorphism
$\Phi: (\mathbb{C}^{2},0) \to (\mathbb{C}^{2},0)$.
Let $S \in \dic(\F)$ and $S' = \Phi_{*}S \in \dic(\F') $ be corresponding separatrices,  attached to
 dicritical components $D$ and $D'$ of the desingularizations of $\F$ and $\F'$. Then $\val(D) = \val(D')$.
\end{lem}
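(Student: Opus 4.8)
The plan is to split the statement into a combinatorial part, which is handed to us by $\cl{S}$-equisingularity, and a geometric part comparing the two resolutions of a \emph{single} foliation. Concretely, I would first isolate the following fact: for a dicritical component $D$ in the desingularization of any foliation, the valence of $D$ in the minimal desingularization equals the valence of the corresponding component in the minimal $\cl{S}$-desingularization. Granting this, the Lemma follows immediately. By Proposition~\ref{s-equising} the separatrix correspondence $S\mapsto S'=\Phi_*S$ realizes an isomorphism $\mathbb{A}^{*}_{\cl{S}}(\F)\cong\mathbb{A}^{*}_{\cl{S}}(\F')$; since $S$ is attached to $D$ and $S'$ to $D'$, this isomorphism matches the vertex of $D$ with that of $D'$ and so preserves their $\cl{S}$-valences. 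Applying the fact to $\F$ and to $\F'$ then gives $\val(D)=\val_{\cl{S}}(D)=\val_{\cl{S}}(D')=\val(D')$, where $\val_{\cl{S}}$ denotes valence computed in the $\cl{S}$-desingularization.

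To prove the fact, let $\pi\colon (M,E)\to(\co^2,0)$ be the minimal desingularization of $\F$ and $\pi_{\cl{S}}\colon (M_{\cl{S}},E_{\cl{S}})\to(\co^2,0)$ the minimal $\cl{S}$-desingularization. Because simple singularities separate their separatrices and the dicritical separatrices are transverse to the dicritical components at pairwise distinct trace points, $\pi$ is itself an $\cl{S}$-desingularization; by minimality of $\pi_{\cl{S}}$ it factors as $\pi=\pi_{\cl{S}}\circ\rho$ for a composition of blow-ups $\rho\colon M\to M_{\cl{S}}$. Moreover $D$ already appears in $M_{\cl{S}}$: the infinitely many dicritical separatrices attached to $D$ all pass through the center of the blow-up that creates $D$, and so cannot be simultaneously separated until $D$ is revealed. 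Writing $D_{\cl{S}}\subset E_{\cl{S}}$ for the component whose strict transform under $\rho$ is $D$, the whole point is to show that $\rho$ blows up no point of $D_{\cl{S}}$, so that $D$ and $D_{\cl{S}}$ have exactly the same neighboring components and $\val(D)=\val_{\cl{S}}(D_{\cl{S}})$.

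The key step is to verify that $(\pi_{\cl{S}}^{*}\F,E_{\cl{S}})$ is already reduced along $D_{\cl{S}}$. I would first show that $\pi_{\cl{S}}^{*}\F$ is transverse to $D_{\cl{S}}$ at \emph{every} point. The tangency locus of $\pi_{\cl{S}}^{*}\F$ with the non-invariant component $D_{\cl{S}}$ is a proper analytic subset of $D_{\cl{S}}$, hence finite; at any such point $q$ (including a possible singularity of $\pi_{\cl{S}}^{*}\F$ lying on $D_{\cl{S}}$) the leaf through $q$ projects under $\pi_{\cl{S}}$ to a separatrix of $\F$ whose strict transform is tangent to $E_{\cl{S}}$ and therefore not transverse to the divisor, contradicting the defining property of an $\cl{S}$-desingularization. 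Thus $\pi_{\cl{S}}^{*}\F$ is transverse to $D_{\cl{S}}$ everywhere: off its corners $D_{\cl{S}}$ carries no singularity, and at each corner $D_{\cl{S}}\cap D_i$ the point is transverse to $D_{\cl{S}}$ with $D_i$ invariant, i.e.\ a regular point of the pair. Hence the reduction is complete along $D_{\cl{S}}$, the further blow-ups in $\rho$ avoid it, and $\val(D)=\val_{\cl{S}}(D_{\cl{S}})$.

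The main obstacle is exactly this transversality claim at the corners of $D_{\cl{S}}$: a priori the $\cl{S}$-desingularization guarantees only that separatrices are disjoint and transverse, not that the singularities of $\pi_{\cl{S}}^{*}\F$ are simple, so one must exclude a non-reduced singularity sitting at a corner of the dicritical component. The argument above resolves this by playing the separation of separatrices against the non-invariance of $D_{\cl{S}}$: such a corner singularity would either force $D_{\cl{S}}$ to be invariant, contradicting its dicritical nature, or it would emit a separatrix off the divisor that the $\cl{S}$-desingularization has failed to separate. Making this dichotomy fully rigorous---in particular treating a possible saddle-node whose weak separatrix is not contained in $E_{\cl{S}}$---is the delicate point on which the whole comparison rests.
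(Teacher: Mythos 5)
Your first paragraph reduces everything to the ``fact'' that the valence of $D$ in the minimal desingularization equals the valence of $D_{\cl{S}}$ in the minimal $\cl{S}$-desingularization, and that fact is false: the case you flag at the end as delicate is where the argument actually breaks, not a technicality. Your transversality argument does rule out a point $q\in D_{\cl{S}}$ at which $\pi_{\cl{S}}^{*}\F$ is \emph{regular} but tangent to $D_{\cl{S}}$ (the leaf through $q$ projects to a separatrix whose transform is tangent to $E_{\cl{S}}$). But at a \emph{singular} point $q\in D_{\cl{S}}$ there is no leaf through $q$, and the defining conditions of an $\cl{S}$-desingularization constrain only the separatrices: they force the germ of $\pi_{\cl{S}}^{*}\F$ at $q$ to have at most one formal separatrix branch off $E_{\cl{S}}$, smooth and transverse to $D_{\cl{S}}$. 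This is perfectly compatible with $q$ being a non-simple singularity sitting on the non-invariant component $D_{\cl{S}}$. Germs with a unique formal separatrix branch which is moreover smooth do exist (by Proposition \ref{prop:Equa-Ba} they necessarily carry tangent saddle-nodes in their own reduction, since $\nu_{0}=\nu_{0}(\hat{F})-1+\tau_{0}$ forces $\tau_{0}\geq 1$), and such a germ can be placed at one point $q$ of a dicritical first exceptional divisor $E_{1}$ with the foliation transverse to $E_{1}$ elsewhere. For the resulting $\F$ the single blow-up already separates all separatrices, so it is the minimal $\cl{S}$-desingularization and $\val_{\cl{S}}(D_{\cl{S}})=0$; yet the minimal reduction of $\F$ must continue blowing up over $q$, so $\val(D)\geq 1$. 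Hence $\rho$ does blow up points of $D_{\cl{S}}$ and your key identity fails.

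The paper's proof circumvents exactly this obstruction. It does not compare the two resolutions of a single foliation; it quotes from \cite{mol2002} the fact that every connected component of $E\setminus\cl{D}$ carries a separatrix of $\F$. Since the dual graph is a tree and dicritical components are pairwise non-adjacent, $\val(D)$ equals the number of connected components of $E\setminus\cl{D}$ adjacent to $D$, and each of these is witnessed by an isolated separatrix whose position relative to $\sep(D)$ is part of the equisingularity data of finite unions of separatrices, which is preserved by $\Phi_{*}$ (Proposition \ref{s-equising}). In the scenario above this is what saves the statement: the components created over $q$ carry the isolated separatrix attached there, so the extra unit of valence is detected by that separatrix and transported to $\F'$. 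To repair your argument you would have to replace your ``fact'' by this weaker, separatrix-witnessed count of the neighbors of $D$.
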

\begin{proof} This is a consequence  of Proposition~\ref{s-equising} and of  the following fact:
if $\pi: (M,E) \to (\co^2,0)$ is the reduction of singularities for $\F$ and $\cl{D} \subset E$
is the union of all dicritical components, then each connected component of $E \setminus \cl{D}$ carries
a separatrix of $\F$ (see \cite[Prop. 4]{mol2002}).
\end{proof}

This lemma allows us to prove the following:
\begin{prop}
\label{equiv-balanced-eq}
Let $\F$ and $\F'$ be  germs of foliations,   equivalent by a germ of $C^{\infty}$ diffeomorphism
$\Phi: (\mathbb{C}^{2},0) \to (\mathbb{C}^{2},0)$.
 If $\hat{F}$ is a balanced equations of separatrices for $\F$,
then $\hat{F}' = \Phi_{*} \hat{F}$ is a balanced equation of separatrices for $\F'$. Besides, $\nu_{0}(\hat{F}) = \nu_{0}(\hat{F}')$.
\end{prop}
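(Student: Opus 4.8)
The plan is to transport the balanced divisor of $\hat{F}$ along the bijection $\Phi_*\colon\sep(\F)\to\sep(\F')$ and to verify, one requirement at a time, the three conditions of Definition \ref{def-balanced-set} together with the equality of algebraic multiplicities. Recall from Theorem \ref{smooth-equisingular} and the remarks preceding Proposition \ref{s-equising} that $\Phi_*$ is a bijection carrying $\iso(\F)$ onto $\iso(\F')$ and $\dic(\F)$ onto $\dic(\F')$, and that every branch $S$ is equisingular to $S'=\Phi_* S$, so in particular $\nu_0(S)=\nu_0(S')$. First I would dispose of the isolated part: since $\Phi_*$ restricts to a bijection $\iso(\F)\to\iso(\F')$, the isolated summand $\sum_{S\in\iso(\F)}(S')$ in the divisor of $\hat{F}'$ is exactly $\sum_{S'\in\iso(\F')}(S')$, each isolated branch of $\F'$ occurring with coefficient one. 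The dicritical coefficients $a_{S'}:=a_S$ are then nonzero for only finitely many $S'\in\dic(\F')$ and are attached to genuine dicritical branches of $\F'$, so the shape \eqref{divisor-bal-eq} of the divisor is respected.

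The crux is the balance condition \eqref{eq:1-balance} for $\hat{F}'$ along each dicritical component. Here I would first promote the $\cl{S}$-equisingularity of Proposition \ref{s-equising} to a correspondence of dicritical components: in the dual tree a component $D$ is dicritical precisely when its weight $n_{2}(D)=\#\sep(D)$ equals $\infty$, so the isomorphism $\mathbb{A}^{*}_{\cl{S}}(\F)=\mathbb{A}^{*}_{\cl{S}}(\F')$ induced by $\Phi_*$ matches dicritical components $D\leftrightarrow D'$, and this matching is realized by $\Phi_*$ on attached branches, giving $\Phi_*(\sep(D))=\sep(D')$. Granting this, the balance condition transfers by a direct computation: using $a_{S'}=a_S$, the bijection $\Phi_*\colon\sep(D)\to\sep(D')$, the fact that $\hat{F}$ satisfies \eqref{eq:1-balance} along $D$, and finally $\val(D)=\val(D')$ from Lemma \ref{dic-correspondence}, one obtains
\[
\sum_{S'\in\sep(D')}a_{S'}=\sum_{S\in\sep(D)}a_S=2-\val(D)=2-\val(D').
\]
Thus $\hat{F}'$ is a balanced equation of separatrices for $\F'$.

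The remaining assertion $\nu_0(\hat{F})=\nu_0(\hat{F}')$ is then immediate: the algebraic multiplicity of a meromorphic function is the $\nu_0$-weighted sum of its divisor, so after reindexing the divisor of $\hat{F}'$ by $S'=\Phi_* S$ and using that the coefficients are unchanged while $\nu_0(S')=\nu_0(S)$ for every branch, I would obtain
\[
\nu_0(\hat{F}')=\sum_{S\in\iso(\F)}\nu_0(S)+\sum_{S\in\dic(\F)}a_S\,\nu_0(S)=\nu_0(\hat{F}).
\]

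I expect the main obstacle to be the well-definedness of the dicritical component correspondence, namely checking that all branches attached to a fixed dicritical $D$ are sent by $\Phi_*$ to branches attached to a single dicritical $D'$, and conversely. This cannot be read off from curve equisingularity of finite subfamilies alone, since the desingularization of a finite union of dicritical branches need not reach the foliated dicritical component; it must instead be extracted from the $\cl{S}$-equisingularity of Proposition \ref{s-equising}, taking care that the tree isomorphism used is the one induced by $\Phi_*$ and that, along dicritical components, the $\cl{S}$-desingularization carries the same $\sep(D)$ and $\val(D)$ as the full reduction of singularities.
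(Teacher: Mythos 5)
Your proposal is correct and follows essentially the same route as the paper's proof: the bijection $\Phi_*$ preserving the isolated/dicritical dichotomy gives the shape of the divisor, Lemma~\ref{dic-correspondence} gives the equality of valences and hence the balance condition, and the branch-by-branch equisingularity from Theorem~\ref{smooth-equisingular} gives $\nu_0(\hat{F})=\nu_0(\hat{F}')$. The paper states all of this more tersely, and the matching of dicritical components that you rightly flag as the delicate point is absorbed there into Proposition~\ref{s-equising} and Lemma~\ref{dic-correspondence}, exactly as you suggest.
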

\begin{proof} The isolated separatrices of $\F$ and $\F'$ are in correspondence by $\Phi$, so that
they appear in the zero divisor of both balanced equations with coefficient 1. Similarly, there is
a correspondence between  dicritical separatrices of $\F$ and $\F'$, which, by
 Lemma~\ref{dic-correspondence}, are attached to dicritical components having the same valences. Therefore, $\hat{F}'$ is a balanced equation
for $\F'$. Finally, the equality on the algebraic multiplicities follows
from the equisingularity property given by Theorem~\ref{smooth-equisingular}.
\end{proof}

The final ingredient for the proof of Theorem \ref{equi-2nd-type-thm} is the following result of \cite{rosas2010}:
\begin{thm}
\label{inv-mult-alg}
Let $\F$ and $\F'$ be germs at $(\co^{n},0)$ of $C^1$ equivalent one dimensional foliations. Then
$\nu_{0}(\F) = \nu_{0}(\F')$.
\end{thm}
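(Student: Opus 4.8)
The plan is to extract $\nu_{0}(\mathcal{F})$ from a geometric quantity that depends only on the partition of a punctured neighborhood of $0$ into leaves together with the $C^{1}$ structure, and is therefore transported by $\Phi$. Since $\Phi$ is merely $C^{1}$, no Taylor data beyond the derivative $D\Phi(0)$ is available, so the natural tool is the blow-up rather than the characteristic curves of the previous sections. I would work with the punctual (real oriented, and complex) blow-up $\pi\colon (\tilde M, E)\to(\cn{n},0)$, which replaces $0$ by the exceptional divisor $E$ --- the sphere $S^{2n-1}$ in the real picture, $\pe^{n-1}_{\co}$ in the complex one. Because $D\Phi(0)$ is an $\re$-linear isomorphism, $\Phi$ lifts to a homeomorphism $\tilde\Phi$ of the blow-up whose restriction to $E$ is the map induced by $D\Phi(0)$; by Proposition \ref{rosas}, after possibly composing with complex conjugation this induced map is the holomorphic (or antiholomorphic) projectivization of a $\co$-linear isomorphism, hence an automorphism of $\pe^{n-1}_{\co}$ up to conjugation. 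The lifted foliation $\pi^{*}\mathcal{F}$ is then carried by $\tilde\Phi$ to $\pi^{*}\mathcal{F}'$.

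Next I would identify the invariant. Writing a defining vector field as $\mathbf v=\mathbf v_{\nu}+\mathbf v_{\nu+1}+\cdots$ with $\nu=\nu_{0}(\mathcal{F})$ and $\mathbf v_{\nu}\neq 0$ the leading homogeneous part, the behaviour of $\pi^{*}\mathcal{F}$ near $E$ is governed by $\mathbf v_{\nu}$: in the non-dicritical case $E$ is invariant and the projectivized leading field determines a finite singular set on $E$ together with a tangency order, while in the dicritical case $E$ is transverse to the leaves and the leading order drops by one. I would package this as a single integer attached to $(\pi^{*}\mathcal{F},E)$ that recovers $\nu_{0}(\mathcal{F})$ --- for instance the contact order between a generic leaf and the divisor --- and check that it is intrinsic, that is, unchanged when $\mathbf v$ is multiplied by a unit, since this does not alter $\nu$.

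The invariance step would then show that $\tilde\Phi$, being a homeomorphism that carries the leaves of $\pi^{*}\mathcal{F}$ to those of $\pi^{*}\mathcal{F}'$ and whose restriction to $E$ is an (anti)holomorphic automorphism, preserves this integer; comparing contact orders in both directions yields $\nu_{0}(\mathcal{F})=\nu_{0}(\mathcal{F}')$. The surface case $n=2$, where $E\cong\pe^{1}_{\co}$, the singularities of $\pi^{*}\mathcal{F}$ on $E$ are the roots of the degree $\leq\nu+1$ tangent-cone polynomial, and the invariant is transparently the degree $\nu$ of the projectivized leading field $z\mapsto[\mathbf v_{\nu}(z)]$, serves as the model computation; the same scheme is then carried out on $\pe^{n-1}_{\co}$ for general $n$.

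The hardest part will be the extraction of the integer $\nu$ from purely $C^{1}$ data. The lift of a $C^{1}$ diffeomorphism is continuous but in general not differentiable along $E$, so the leaves of $\pi^{*}\mathcal{F}$ and $\pi^{*}\mathcal{F}'$ are matched only by a homeomorphism near the divisor; controlling the asymptotic contact order uniformly --- so that an \emph{integer} invariant survives the loss of smoothness, and so that the dicritical case, where the order shifts by one, is correctly accounted for --- is the delicate technical core. This is precisely the content of the cited result of \cite{rosas2010}, whose quantitative estimates I would invoke to close the argument.
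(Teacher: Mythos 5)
The first thing to note is that the paper does not prove this statement at all: Theorem \ref{inv-mult-alg} is imported verbatim from \cite{rosas2010} and used as a black box in the proof of Proposition \ref{tang-excess-invariance}. So there is no internal argument to measure your proposal against, only the external reference.

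Measured against that reference, your outline does identify the right opening moves --- lift $\Phi$ to the blow-up, use Proposition \ref{rosas} (or rather its $n$-dimensional analogue; as stated in the paper it only covers $\mathbb{C}^2$) to make the induced map on the exceptional divisor an (anti)holomorphic automorphism of $\mathbb{P}^{n-1}_{\mathbb{C}}$, and try to read $\nu_{0}(\mathcal{F})$ off the pair $(\pi^{*}\mathcal{F},E)$. But as a proof the proposal is circular: the step you yourself flag as the ``delicate technical core'' --- that an integer contact or tangency order attached to $(\pi^{*}\mathcal{F},E)$ survives transport by a lift that is merely continuous along $E$, with the dicritical shift handled correctly --- is not an auxiliary estimate you can borrow; it is essentially the whole content of the theorem. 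Closing the argument by invoking ``the quantitative estimates of \cite{rosas2010}'' amounts to citing the theorem to prove itself. If your intent was to reprove the result, the missing piece is a genuinely $C^{1}$-robust characterization of $\nu_{0}$ (in \cite{rosas2010} this is extracted by a degree/index argument near the divisor, not by comparing contact orders of individual leaves, which a homeomorphism does not preserve); if the intent was only to justify quoting it, the honest version of your proposal is the one-line citation the paper itself uses.
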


This enables to prove the following:

\begin{prop}
\label{tang-excess-invariance}
The tangency excess $\tau_{0}(\F)$ is a $C^{\infty}$ invariant.
\end{prop}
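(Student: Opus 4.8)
The plan is to obtain the invariance of $\tau_{0}$ as an immediate arithmetic consequence of the numerical identity in Proposition~\ref{prop:Equa-Ba} combined with the two invariance statements already established. Rewriting that identity, for any foliation $\F$ and any balanced equation of separatrices $\hat{F}$ of $\F$ we have
\[
\tau_{0}(\F) = \nu_{0}(\F) - \nu_{0}(\hat{F}) + 1 .
\]
Since this holds for the given $\F$ and for $\F'$ simultaneously, it suffices to exhibit a balanced equation of $\F$ and a corresponding balanced equation of $\F'$ for which both the foliation multiplicities and the multiplicities of the balanced equations agree. I would therefore fix a balanced equation $\hat{F}$ of separatrices for $\F$ and set $\hat{F}' = \Phi_{*}\hat{F}$, the transported meromorphic function built from the separatrix correspondence of Theorem~\ref{smooth-equisingular}.

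First I would invoke Proposition~\ref{equiv-balanced-eq}: it guarantees that $\hat{F}'$ is indeed a balanced equation of separatrices for $\F'$, and moreover that $\nu_{0}(\hat{F}) = \nu_{0}(\hat{F}')$. Next, observing that a germ of $C^{\infty}$ diffeomorphism is in particular a $C^{1}$ equivalence, I would apply Theorem~\ref{inv-mult-alg} in dimension $n=2$ to conclude $\nu_{0}(\F) = \nu_{0}(\F')$. Substituting both equalities into the two instances of the displayed identity and subtracting, every term cancels and we obtain $\tau_{0}(\F) = \tau_{0}(\F')$, which is the assertion.

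Strictly speaking there is no computational obstacle at this final step: the substantive work has been distributed over the preceding results. The genuine content lies in Proposition~\ref{equiv-balanced-eq} --- which rests on the equisingularity of separatrix sets provided by Theorem~\ref{smooth-equisingular} and on the matching of dicritical valences in Lemma~\ref{dic-correspondence} --- and in the external input Theorem~\ref{inv-mult-alg} of \cite{rosas2010} on the $C^{1}$ invariance of the algebraic multiplicity. The only point demanding a moment's care is to ensure that the \emph{same} transported function $\hat{F}'=\Phi_{*}\hat{F}$ serves simultaneously as the balanced equation realizing $\nu_{0}(\hat{F}')$ and as the object whose multiplicity is compared with $\nu_{0}(\hat{F})$; this compatibility is exactly what Proposition~\ref{equiv-balanced-eq} delivers, so no independent choice of balanced equation for $\F'$ is needed.
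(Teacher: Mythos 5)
Your proof is correct and follows exactly the paper's own argument: both deduce $\nu_{0}(\F)=\nu_{0}(\F')$ from Theorem~\ref{inv-mult-alg}, use Proposition~\ref{equiv-balanced-eq} to get a corresponding balanced equation with $\nu_{0}(\hat{F})=\nu_{0}(\hat{F}')$, and conclude via the identity of Proposition~\ref{prop:Equa-Ba}. No differences worth noting.
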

\begin{proof}
Let $\Phi$ be a $C^{\infty}$ equivalence between $\F$ and $\F'$.
We have $\nu_{0}(\F) = \nu_{0}(\F')$ by the previous theorem.
Moreover, Proposition~\ref{equiv-balanced-eq} gives that if $\hat{F}$ be a balanced equation of separatrices for $\F$, then
$\hat{F}' = \Phi_{*}\hat{F}$ is a balanced equation of separatrices for $\F'$ and
$\nu_{0}(\hat{F}) = \nu_{0}(\hat{F}')$.
The result then follows from Proposition~\ref{prop:Equa-Ba}.
\end{proof}

We are now ready to complete the proof of Theorem~\ref{equi-2nd-type-thm}:
\medskip

\noindent {\em Proof  of Theorem~\ref{equi-2nd-type-thm}}. Let  $\F$ and $\F'$ be $C^{\infty}$ equivalent foliations.
Being $\F$ of second type, it holds
 $\tau_{0}(\F) = 0$.  Consequently,  by Proposition~\ref{tang-excess-invariance}, $\tau_{0}(\F') = 0$ and
$\F'$ is also of second type. Hence, both $\F$ and $\F'$ are $\cl{S}$-desingularizable by Proposition~\ref{s-equidesing}.
The proof is accomplished by  using the fact that $C^{\infty}$ equivalent foliations are $\cl{S}$-equisingular
(Proposition~\ref{s-equising}).
\qed

\bibliographystyle{plain}
\bibliography{Biblio}

\medskip \medskip
\noindent
Rog\'erio  Mol  \\
Departamento de Matem\'atica \\
Universidade Federal de Minas Gerais \\
Av. Ant\^onio Carlos, 6627  \  C.P. 702  \\
30123-970  --
Belo Horizonte -- MG,
Brasil \\
rsmol@mat.ufmg.br

\medskip \medskip
\noindent
Rudy Rosas  \\
Pontificia Universidad Cat\'olica del Per\'u \\
Av. Universitaria 1801  \\
Lima,
Peru \\
rudy.rosas@pucp.edu.pe

\end{document}